\newlist{steps}{enumerate}{1}
\setlist[steps, 1]{label = Step \arabic*:}
\setlist[steps, 1]{label = Step \arabic*:}
\newtheorem{theorem}{Theorem}[section]
\newtheorem{lemma}{Lemma}[section]
\newtheorem{proposition}{Proposition}[section]
\newtheorem{assumption}{Assumption}[section]
\theoremstyle{definition}
\newtheorem{example}{Example}[section]
\newtheorem{experiment}{Experiment}[section]
\newtheorem{remark}{Remark}[section]
\numberwithin{equation}{section}
\DeclareMathOperator{\dist}{dist}
\def\E{\mathbb{E}} 
\def\R{\mathbb{R}}
\title{Numerical integrators for confined Langevin dynamics}
\author{B. Leimkuhler\thanks{%
School of Mathematics and the Maxwell Institute for Mathematical Sciences, University of Edinburgh, UK; B.Leimkuhler@ed.ac.uk} 
\and A. Sharma\thanks{Department of Mathematical Sciences, Chalmers University of Technology and the University of Gothenburg, Sweden; akashs@chalmers.se} 
\and M.V. Tretyakov\thanks{%
School of Mathematical Sciences, University of Nottingham,
 UK; Michael.Tretyakov@nottingham.ac.uk}}
\begin{document}

\maketitle

\begin{abstract}
 We derive and analyze  numerical methods for  underdamped (kinetic) Langevin dynamics in a domain with elastic reflection at the boundary. 
 First-order approximations are based on an Euler-type scheme incorporating collision-handling at the boundary. To achieve second order,  composition schemes are derived based on decomposition of the generator into collisional drift,  impulse, and stochastic momentum evolution.  In a deterministic setting, this approach would typically lead to first-order approximation, even in symmetric compositions, but we find that the stochastic  method can provide second-order weak approximation  with a single gradient evaluation, both at finite times and in the ergodic limit.   We provide  analysis of this observation, as well as numerical demonstration, and we compare and contrast the performance of different variants of the integration method using model problems.
 
       \medskip
       
       \noindent {\bf Keywords: } stochastic differential equations with reflection, weak approximation, sampling with constraints, computation of ergodic limits, billiards, splitting schemes. 

     \noindent {\bf AMS Classification: }  65C30, 60H35, 60H10, 37H10.
     
\end{abstract}

\section{Introduction}

There is great current demand for reliable numerical methods for solving stochastic differential equations (SDEs). In part this can be attributed to their role in large scale statistical computation, where long stochastic paths are often used to calculate ergodic averages.   Many standard algorithms for SDEs are formulated on unbounded domains, e.g. Euclidean space, but in practice are subject to inequality constraints (e.g. positivity relations) or are employed with more complicated domain restrictions, often implemented in an ad hoc manner.  In this article, we explore the rational design of numerical algorithms for accurate long-term simulation of Langevin SDEs with domain restriction, including sampling from stationary measures with compact support.

Let $G \subset \mathbb{R}^{d}$ be a bounded domain with sufficiently smooth boundary $\partial G$, $n_{G}(q)$ be the outward normal unit vector at $q \in \partial G$, and $I_{\partial G}(q) $ be the indicator function of $q \in \partial G$. Let $(x \cdot y)$ denote the scalar product between two vectors $x$, $y \in \mathbb{R}^d$.  Let $\sigma$ be a positive constant, $b: \bar{G} \times \mathbb{R}^{d} \rightarrow \mathbb{R}^{d}$, and
consider the confined Langevin dynamics (CLD) governed by the  stochastic differential equations (SDEs):
\begin{equation}
\tag{CLD}
\begin{aligned}
    Q(t) = Q(0) + &\int_{0}^{t}P(s)ds,\;\;\;\;
    P(t) = P(0) +\int_{0}^{t}b(Q(s), P(s))ds  + \sigma W(t) + \mathcal{R}(t),\\
    \mathcal{R}(t) &= -\sum\limits_{0<s\leq t}2(P({s^{-}})\cdot n_{G}(Q(s)))n_{G}(Q(s))I_{ \partial G}(Q(s)), \;\;\; t \ge 0,
\end{aligned}\label{cld}
\end{equation}
where $W(t)$ is a $d$-dimensional standard Wiener process.

The process $Q(t)$ does not leave the set $\bar{G}$ and it elastically reflects from the boundary $\partial G$ \cite{a19,a14,a13} which is achieved due to the c\`{a}dl\`{a}g process $\mathcal{R}(t)$. At the collision of the position $Q(t)$ with $\partial G$,  $\mathcal{R}(t)$ acts on the momentum $P(t)$ to reverse the direction of its component orthogonal to $\partial G$ keeping the tangential component unchanged. The summation in $\mathcal{R}(t)$ is understood as a sum of jumps (in other words, discontinuities) in the momentum $P(s)$, which occur when $Q(s)$ hits the boundary at random times within $[0,t]$. More precisely, let $\chi$ be the number of collisions of a trajectory $Q(s)$, $s \in [0,t]$, with points on the boundary $\partial G$,  and let $\tau_i$, $i=1,\ldots, \chi$, be the corresponding collision times (i.e., such random times that $Q(\tau_i) \in \partial G$), then  
\[
\mathcal{R}(t) = -\sum\limits_{i=1}^{\chi}2(P({\tau_i^{-}})\cdot n_{G}(Q(\tau_i)))n_{G}(Q(\tau_i))I_{ \partial G}(Q(\tau_i)),
\]
where, as usual, $P({\tau_i^{-}})$ denotes the value of the momentum before the jump at $\tau_i$. 


Let $ b(q,p) := -\nabla_q U(q) - \gamma p$  and  $\sigma = \sqrt{\frac{2\gamma}{\beta}}$,   then consider the particular case of  confined Langevin dynamics (\ref{cld}) defined in the set $\bar G\times \mathbb{R}^{d}$:
\begin{equation}
\tag{ECLD}
\begin{aligned}
    Q(t) &= Q(0) + \int_{0}^{t}P(s)ds, \\ 
    P(t)  &= P(0) -\int_{0}^{t}\nabla_q U(Q(s))ds - \gamma\int_{0}^{t} P(s)ds + \sqrt{\frac{2\gamma}{\beta}}W(t) + \mathcal{R}(t),\\
    \mathcal{R}(t) &= -\sum\limits_{0<s\leq t}2\big(P(s^{-})\cdot n_{G}(Q(s))\big)n_{G}(Q(s))I_{ \partial G}(Q(s)), 
\end{aligned}
\label{ecld}
\end{equation}
where $\beta >0$ is interpreted as the inverse temperature in molecular dynamics and $\gamma >0$ is a friction parameter.  Eq. (\ref{ecld}) is ergodic with the invariant density $\rho(q,p) = \frac{1}{Z}e^{-\beta(U(q) + |p|^{2}/2)}$, $(q,p) \in \bar G \times \mathbb{R}^{d}$, under suitable assumptions on the potential $U(q)$ and the domain $G$ (see the corresponding discussion in Section~\ref{sec:erg}).

\subsection{Motivation and related work}
The {\em central theme} of this paper is the efficient weak-sense numerical approximation of (\ref{cld})  including the computation of ergodic limits associated with (\ref{ecld}) which is motivated by applications such as those discussed below.

\paragraph{Sampling}  Sampling from a desired distribution is of prime importance for computational statistics and various machine learning applications. Domain restriction arises, for example, in truncated data problems for survival time studies \cite{survanalysis}, ordinal data models \cite{odm}, constrained Lasso and Ridge regression \cite{constlassoridge}, Latent Dirichlet Allocation \cite{lda}, non-negative matrix factorization \cite{matrixfactor}, and stochastic optimal control \cite{Huynh_Karaman_Frazzoli_2016}. 
In \cite{lst23} (see also references therein) the task of sampling from a given distribution with compact support is addressed using reflected Brownian dynamics (in other words, \textit{overdamped} Langevin dynamics with reflection), where the dynamics are confined within $\bar{G}$ thanks to making use of the local time. In the setting of sampling in Euclidean space, the advantages of  underdamped Langevin equations vs  overdamped Langevin dynamics are well known (see e.g. \cite{le_ma_st2016,Ruslan15}) and these benefits can be expected to carry over to the confined-space setting considered in this article.   

\paragraph{Molecular dynamics} To study molecular properties of a given substance at constant temperature, Langevin dynamics is often employed in which positions $q$ and momenta $p$ evolve under forcing due to a potential energy function $U(q)$ together with the stochastic and dissipative forces associated to interactions with a thermal bath. These properties can be expressed as ergodic limits (see \cite{a11,leimkuhler_mathews_15} for more details). A steep (often singular) penalty term may be included in the potential energy  to  prevent particles from leaving the confined domain. The barrier leads to the  drift $- \nabla_q U(q)$ in the Langevin equations increasing rapidly near the boundary of the confined space which can negatively affect stability of numerical methods, or require the use of a variable stepsize \cite{Le_et_al_2024}. Confined Langevin dynamics (\ref{ecld}) serves the purpose of simulating molecular configurations in a restricted region, with atoms undergoing perfectly elastic collisions at the walls, without causing stability problems or requiring stepsize restriction, in contrast to the case of using a steep penalty. Furthermore, confined Langevin dynamics models a natural physical behavior of particles (elastic collisions) at the boundary. In the kinetic theory of gases, for example, specular reflection models are used to describe reflection of gas particles at a totally elastic solid surface \cite{a8}. 

\paragraph{Computational fluid dynamics} The SDEs (\ref{cld})  find applications in modelling discrete particles immersed in turbulent flows inside a confined domain, where colloidal particles are carried by the fluid flow, experiencing a drag force and collisions with the domain boundary. Numerical experiments using models of (\ref{cld}) type allow to study, e.g., convection, polydispersity etc. related to mesoscopic particles suspended in turbulent flow (see e.g., \cite{wall_coll_2019,a27, bossy23} for more details).  

\paragraph{Optimization} Minimizing a loss/error function is at the core of machine learning. In many cases,  constraints are an essential part of model specification, for example reflecting the positivity of certain model parameters or interval restrictions. Moreover, constraining the parameter set can result in improvement of the performance of the model due to the regularization effects of constraints, e.g. \cite{Sangalli2021ConstrainedOT, leimkuhler_constraint_net}. One can exploit  (\ref{ecld}) to minimize a function $U(q)$, where $q$ satisfies constraints represented by domain $G$. 

\paragraph{Specular boundary value problems} Via the Feynman-Kac formula \cite{a13,a10}, the SDEs (\ref{cld}) provide a probabilistic representation of the  specular boundary value problem for linear parabolic partial differential equations (PDEs) (see (\ref{eq2.1})-(\ref{eq2.3})). Solving such PDEs in high dimensions using deterministic methods is computationally infeasible. The proposed numerical methods for (\ref{cld}) given in this paper  may be used together with the Monte-Carlo technique to solve such high-dimensional problems.

The above applications strongly motivate the construction and study of numerical methods for (\ref{cld}). Despite this,  very limited attention has been paid so far to numerical integration of confined Langevin dynamics. In this respect, in \cite{a10} the authors investigated a numerical scheme when $G=\mathbb{R}^{d-1} \times(0,\infty)$ and proved its first order weak convergence. 

\subsection{Contributions and overview} In this paper, we derive first-order weak methods to numerically approximate (\ref{cld}) as well as second-order schemes in the case of (\ref{ecld}). We establish the order of convergence of first-order schemes in both the finite and infinite time settings. The major obstacle is to obtain an estimate related to the number of collisions of the walker with the boundary (see Lemmas~\ref{bl} and~\ref{ebl}).  

To construct splitting schemes, we decompose the generator of the Markov process $(Q(t),P(t))$ into components representing collisional drift (A$_c$), impulse (B), and stochastic momentum evolution (O).  (For details of these maps, see Section 3.) We highlight the following fact about second-order symmetric integrators for (\ref{ecld}). In the deterministic setting, the discussed splitting approach would typically lead to first-order approximation, even in symmetric compositions \cite{BrianBen2000,leimkuhler_mathews_15}. Stochastic numerics experience \cite{milstein2021stochastic} then suggests that finite-time convergence of  stochastic generalizations of such deterministic methods to (\ref{ecld}) should be of weak order one as well. However, it turns out  that the stochastic  method with a single gradient evaluation has second-order convergence (see Table~\ref{table_1.1}). Simply put, this counterintuitive  result is due to the average collision time (within a step of the numerical method in which a collision is encountered) being approximately at the midpoint of the step (see the discussion in Section~\ref{sec:2ndorder}).   
We also make the observation (see Appendix~\ref{sec:colisHD}) that if we randomize the initial condition of otherwise deterministic Hamiltonian dynamics with elastic collisions, we see the same effect as for Langevin equations: geometric integrators such as [BA$_c$B] become 2nd order.  

\begin{table}[htbp] 
\centering
\caption{Note that [OBA$_c$BO] degenerates to the Störmer-Verlet method [BA$_c$B] in the absence of the noise and damping, and A$_c$ becomes A in the absence of collisions. }
\begin{tabular}{c|c|c}\label{table_1.1}
  Method & Without collision &  With collision  \\
\hline\rule{0pt}{5pt}
Störmer-Verlet &  $\mathcal{O}(h^2)$  &  $\mathcal{O}(h)$ \\
\hline
OBA$_c$BO & $\mathcal{O}(h^2)$ & $\mathcal{O}(h^2)$ \\
\end{tabular}
\end{table}

The second-order convergence of the symmetric splitting scheme for  \eqref{ecld} is interesting from another perspective as well. Approximating SDEs in bounded domains typically requires a special construction near the boundary to preserve the convergence order achieved in the whole of $\mathbb{R}^d$. The canonical examples are the projection and penalty Euler methods for simulating reflected SDEs, which exhibit only half-order weak convergence -- lower than the $\mathbb{R}^d$ counterpart.  Therefore the Euler scheme requires specialized procedures on or near the boundary to preserve  first order convergence, for example see \cite{1, 64, 2, lst23, sharma2025stickySDE, sharma2022random}.  In contrast, straightforward specular reflection of momentum in approximating Markov chain provides weak first-order  in the case of Euler-type schemes (see Subsection~\ref{sec:1st}) with second-order convergence achievable for splitting schemes (see Subsection~\ref{sec:2nd}), respectively, for \eqref{ecld}. 

In Section~\ref{cld_sec3} we present the numerical integrators and also state theoretical results on error bounds for the methods, both when the SDEs are integrated over a finite time interval and when associated ergodic limits are approximated. Proofs of the error bounds are given in Section~\ref{sec:proofs} with necessary background material for (\ref{cld}) and (\ref{ecld}) provided in Section~\ref{sec:pre}. The main difficulty in proving convergence arises due to the need to quantify the number of steps with collision. In Section~\ref{sec:tests} we present the results of numerical tests which confirm our theoretical predictions and compare the methods from  Section~\ref{cld_sec3}. We also include experiments with a restricted Neal's funnel and Bayesian inference for the SIR model of disease spread. In Section~\ref{sec:concl} we summarize this work and suggest topics of interest for further research related to confined Langevin dynamics.

\section{Preliminaries}\label{sec:pre}

\textbf{Notation. }
For the boundary sets we write:
    $\Sigma^{+} = \{  (q,p) \in \partial G \times \mathbb{R}^{d}; (p\cdot n_{G}(q)) >0\}$ and $\Sigma_{t}^{+} = (0,t)\times \Sigma^{+}$. The space $C^{l_{1},l_{2},l_{3}}([0,T]\times A\times \mathbb{R}^{d})$ denotes the space of  functions whose continuous derivatives exist with respect to $t \in [0,T] $, $q \in A$ and $p\in \mathbb{R}^{d}$ up to order $l_{1}$, $l_{2}$ and $l_{3}$, respectively; $C^{l_{1},l_{2}}(A\times \mathbb{R}^{d})$  has  the same meaning but with respect to $q$ and $p$;  $ C_{b}^{l_1,l_2}(A\times \mathbb{R}^{d})$  is a space of functions in  $C^{l_{1},l_{2}}(A\times \mathbb{R}^{d})$ with bounded  derivatives up to order $l_1$ in $q$ and $l_2$ in $p$; and  $ C_{c}^{l_1,l_2}(A\times \mathbb{R}^{d})$ are  functions in  $C^{l_{1},l_{2}}(A\times \mathbb{R}^{d})$ with compact support. 
As usual, $|\cdot |$ denotes the $L^2$-norm of a vector in $\mathbb{R}^{d}$.
For a differentiable vector-valued function  $F(x)$ from $\bar{G}\subset \mathbb{R}^{d}$ or $\mathbb{R}^{d}$ to $\mathbb{R}^{d}$, $\textbf{J}_{F}(x)$ denotes the Jacobian matrix of $F$ evaluated at $x$.

\medskip

To prove the first-order weak-sense convergence of the schemes proposed in Section~\ref{sec:1st}, we need assumptions on the boundary $\partial G$ of $G$ and the drift coefficient $b(q,p)$.
\begin{assumption}\label{cld_as:1}
We assume the boundary $\partial G$ of set $G$ belongs to $C^{4}$.
\end{assumption}  
\begin{assumption}\label{as:2}
The coefficient $b(q,p)$ is a $C^{2,2}(\bar{G} \times \mathbb{R}^{d})$ function which grows at most linearly in $p$ at infinity, i.e. there exists a constant $K > 0$ such that,
\begin{align}\label{linegb}
    |b(q,p)| \leq K(1+|p|),
\end{align}
for all $p \in \mathbb{R}^{d}$ and $q \in \bar{G}$.
\end{assumption}

In \cite{a19}, existence of weak solutions and uniqueness in law of (\ref{cld}) in $G=(0,\infty)\times \mathbb{R}^{d-1}$ with multiplicative noise and  $b(q,p) = b(q) - \gamma p$, $\gamma > 0$,  were established under boundedness assumptions on $b(q)$ and  the diffusion coefficient $\sigma(q)$. 
In \cite{a14}, existence of weak solution of (\ref{cld}) in $G=(0,\infty)\times \mathbb{R}^{d-1}$  with bounded drift coefficient $b$ was shown. 
This  well-posedness result was extended in \cite{a13} to the case of $\partial G \in C^{3}$ being a compact submanifold of $\mathbb{R}^{d}$, by establishing the existence of a weak solution and its pathwise uniqueness for $b(q,p) \equiv 0$, and then, by using  Girsanov's theorem, for any bounded $b(q,p)$. We note that Girsanov's theorem can be applied if $b(q,p)$ has linear growth in $p$ (see \cite[Proposition 5.3.6]{a20}).  Pathwise uniqueness of the solution of (\ref{cld}) with $b(q,p) \equiv 0$ is proved in \cite{a18} when $ G$ is a convex polytope.

\subsection{Kolmogorov equations for (\ref{cld})}

Consider the backward Kolmogorov equation
\begin{align}
    \frac{\partial u}{\partial t}  + \frac{\sigma^{2}}{2}\Delta_{p}u + (p\cdot\nabla_{q}u) + (b(q,p)\cdot \nabla_{p}u)   = 0, \;\;\;\;(t,q,p) \in (0,T)\times G \times \mathbb{R}^{d}, \label{eq2.1} 
\end{align}
with terminal condition
\begin{equation}
 u(T,q,p) = \varphi(q,p),\;\;\;\; (q,p) \in G\times \mathbb{R}^{d},\label{eq2.2}
\end{equation}
and specular boundary condition
\begin{equation}
  u(t,q,p) = u(t,q,p-2(p\cdot n_{G})n_{G}),\;\;\;\; (t,q,p) \in \Sigma_{T}^{+},  \label{eq2.3}
\end{equation}
where $\Delta_{p}$ is the Laplacian with respect to $p$, $\nabla_{q}$ and $\nabla_{p}$ are gradients with respect to $q$ and $p$, respectively, and $n_{G}:= n_{G}(q)$. 

The solution of (\ref{eq2.1})-(\ref{eq2.3}) has the probabilistic representation  \cite{a13,a10}:
\begin{equation}
    u(t_{0},q_{0},p_{0}) = \mathbb{E}\varphi(Q_{t_0,q_0,p_0}(T),P_{t_0,q_0,p_0}(T)), 
\end{equation}
where $(Q_{t_0,q_0,p_0}(t),P_{t_0,q_0,p_0}(t))$, $t \ge t_0$, solves (\ref{cld}) with the starting point $(t_{0},q_{0},p_{0})$.

Below we give three examples of the specular boundary value problem (\ref{eq2.1})-(\ref{eq2.3}), for which we can obtain solutions in explicit form. 
\begin{example}
Take $G =(0,\infty)$ and $\partial G = \{ 0\}$, $ b(q,p) = \Big(\frac{1}{q+c} + p\Big)$, $\sigma  =1$, $\varphi(q,p) = (q+c)^{2}e^{-p^{2}}$, and boundary condition $u(t,q,p) = u(t,q,-p)$ at $\Sigma_{T}^{+}$, where $c \ge 0$. The solution is given by $u(t,q,p) = (q+c)^{2}e^{-p^{2}}e^{-(T-t)}$.
\end{example}

\begin{example}\label{example2.2}
Let $G \subset \mathbb{R}^{d}$, $U(q) \in C^{1}(\bar{G})$ and $\alpha,\; \beta  > 0$. Take $b(q,p) = -\nabla_{q}U(q) + \alpha p$, $\sigma = \sqrt{2\alpha/\beta}$, $\varphi = e^{-\beta\big(|p|^{2}/2 + U(q)\big)} $ and boundary condition is $u(t,q,p) = u(t,q,p-2(p\cdot n_{G})n_{G}), (t,q,p) \in \Sigma_{T}^{+}$. The solution is $u(t,q,p)=\exp{\Big(-\beta\big(|p|^{2}/2 + U(q)\big)-\alpha d(T-t)\Big)}$.
\end{example}

\begin{example}
Let $G \subset \mathbb{R}^{d}$, $b(q,p) = -\frac{q\cos{|q|^{2}}+2(d+2)p}{2|p|^{2} + 1} $, $\sigma=\sqrt{2}$, and $\varphi(q,p) = |p|^{4} + |p|^{2} + \sin{|q|^{2}}$. The boundary condition is $u(t,q,p) = u(t,q,p-2(p\cdot n_{G})n_{G}),\; (t,q,p) \in \Sigma_{T}^{+}$. Then, the solution is $u(t,q,p)=|p|^{4} + |p|^{2} + \sin{|q|^{2}} + 2d(T-t)$.
\end{example}

In the above examples, the condition $u(t, q, p ) = u(t,q, p - 2 (\tilde{n}(q) \cdot p))  $, where $\tilde{n}$ is smooth extension of $n$, is satisfied in the entire domain $\bar G$. We present an example below for which this is not the case. 

\begin{example}
Take $G = (0, \infty)$ with $\partial G = \{ 0\}$.  Consider the PDE:
\begin{align}
    \frac{\partial}{\partial t}u + p \frac{\partial u}{\partial q} - q \frac{\partial u}{\partial p} - p \frac{\partial u}{\partial p} + \frac{\partial^2 u}{\partial p^2} = 0, \quad (t, q, p) \in (0,T) \times G \times \mathbb{R},
\end{align}
with 
\begin{align}
    u(T,q,p) = q^2-1,\quad   (q, p) \in \times G \times \mathbb{R},
\end{align}
and
\begin{align}
 u(t, 0, p) = u(t, 0, - p), \quad     (t, p) \in (0,T)\times \mathbb{R}.
\end{align}
It can be verified that 
\begin{align*}
    u(t,q,p) &= \frac{2}{3}(q^2 + p^2 + qp - 2)e^{-(T-t)}    \\ 
    & \quad + \frac{1}{3} (q^2 - 2p^2 - 2 q p + 1)e^{-(T-t)}\cos(\sqrt{3}(T-t)) \\ & \quad + \frac{1}{\sqrt{3}}( 2 qp  + q^2 - 1) e^{-(T-t)}\sin(\sqrt{3}(T-t)).
\end{align*}
\end{example}

We make the following assumptions on the terminal function $\varphi$ and the solution of (\ref{eq2.1})-(\ref{eq2.3}).
\begin{assumption}\label{cld_as:3}
$\varphi (q,p) \in C^{4 ,4 }(\bar{G}\times \mathbb{R}^{d})$.
\end{assumption}
\begin{assumption}\label{cld_as:4a}
Let $G$ be a bounded domain. The solution of (\ref{eq2.1})-(\ref{eq2.3}) exists and  belongs to $C^{2,2,4}([0,T)\times G\times \mathbb{R}^{d})$ and $C^{0,1,0}([0,T]\times \bar{G} \times \mathbb{R}^{d})$, 
satisfying
\begin{align}\label{cld_der_bound}
   \sup_{(t,q) \in[0, T) \times G}\bigg( \sum\limits_{l = 1}^{2}\sum\limits_{i + |j| = l }&|D_{t}^{i}D_{q}^{j}u(t,q,p)| +  \sum\limits_{l = 1}^{4}\sum\limits_{|j| = l}|D_{p}^{j}u(t,q,p)| \bigg) \leq C(1 + |p|^{2m}), 
\end{align}
where $D_{x}^{j}u = \frac{\partial^{|j|}u}{\partial x^{j_{1}},\dots,\partial x^{j_{d}}} $ and $j$ is a multi-index.

\end{assumption}

The schemes introduced in this paper work for bounded and unbounded domains with boundaries. For ease in the arguments in the proofs of main results, we consider the case of bounded domains unless specified otherwise.    
We note that in the case of unbounded domains to use the considered explicit schemes, 
we could impose the global Lipschitz condition on the gradient of the potential $\nabla U(q)$  or exploit the concept of rejecting exploding trajectories from \cite{a23,40,milstein2021stochastic}. 

There is a dearth of regularity results for solutions of the specular boundary value problem (\ref{eq2.1})-(\ref{eq2.3}) available in the literature. The above four examples demonstrate that there is a sufficiently large class of specular boundary value problems whose solutions satisfy Assumption~\ref{cld_as:4a}.

It is shown in recent work (see \cite{PDE25}) that the problem  (\ref{eq2.1})-(\ref{eq2.3}) has a classical solution.  More precisely (see Corollary~5.5 in \cite{PDE25}), assume that  $\partial G \in C^{9/2}$, $ b(q,p) \in C_{b}^{1,3}(\bar G \times \mathbb{R}^{d})$, and $\varphi (q,p) \in C_b^{4/3 ,4 }(\bar{G}\times \mathbb{R}^{d})$ and  agrees with the specular boundary condition, 
then $u \in C_b^{2,4/3,4}([0,T]\times \bar{G}\times \mathbb{R}^{d})$
and the corresponding derivatives are Lipschitz continuous.  It is also proved in \cite{PDE25} that outside the grazing set $\{(q,p) \in \partial G \times \mathbb{R}^{d} : (p \cdot n_{G}(q)) =0\}$ the solution $u(t,q,p)$ is arbitrary smooth assuming that $\partial G$, $b$ and $\varphi$ are sufficiently smooth.
We note that the examples presented earlier in this section are smooth in the whole domain $[0,T]\times \bar{G}\times \mathbb{R}^{d}$, which does not contradict counterexamples in Section~6 of \cite{PDE25}, 
i.e., the non-compatibility-type condition of Theorem 6.1 is not satisfied. 
Also, it was shown in \cite{a10} that if $\varphi \in C_{c}^{1,1}(G \times \mathbb{R}^{d})$ and $b \in C_{b}^{1,1}(\bar G \times\mathbb{R}^{d})$, then $u(t,q,p) \in C([0,T]\times \bar{G}\times \mathbb{R}^{d})$ and $\nabla_{q}u$ and $\nabla_{p}u$ exist and belong to $C((0,T)\times \bar{G} \times \mathbb{R}^{d})$ for $G = \{(q^1, \dots, q^d)\; ;\; q^d>0 \}$. 

We emphasize that Assumption~\ref{cld_as:4a} is used for proving first-order weak convergence of the schemes introduced in Section~\ref{sec:1st} and higher regularity is needed to prove second-order convergence for the schemes from Section~\ref{sec:2nd}. At the same time, the schemes can be used in practice under weaker assumptions. It is of interest also to study convergence of the presented integrators under some weaker assumptions which might be an interesting subject for future work.  

The Fokker-Plank (i.e., forward Kolmogorov) PDE with specular reflection has the form
\begin{align}
     \frac{\partial \rho}{\partial t} &=  -(p \cdot \nabla_q \rho) - \nabla_p \cdot (\rho b(q,p))  + \frac{\sigma^2}{2} \Delta_p \rho, \quad t >0,\,\, q \in G, \,\, p \in \mathbb{R}^d, \\
     \rho(t, q, p) &= \rho(t, q , p - 2(n(q)\cdot p)p), \quad (q,p) \in \Sigma^{+}, \\
     \rho (0,q,p) &= \delta(q-q_0) \delta(p-p_0), \quad q \in G, \,\, p \in \mathbb{R}^d, 
\end{align} 
for some $q_0 \in G$, $p_0 \in \mathbb{R}^d$. By \cite{PDE25} this PDE problem has a classical solution under our assumptions. 

\begin{example}
Consider the case of \eqref{ecld} with constant potential, $d=1$, and $\beta = 1$. The corresponding Fokker-Plank equation is
\begin{align}\label{fkp1eq}
     \frac{\partial \rho}{\partial t} = -p \frac{\partial \rho}{\partial q} + \gamma \frac{\partial}{\partial p} (p\rho) + \gamma \frac{\partial^2 \rho}{\partial p^2}, \quad  q \in (0,1), \,\, p \in \mathbb{R},\; t > 0,
\end{align}
with specular boundary and initial conditions
\begin{align}\label{fkp1bc}
    &\rho(t, q , p) = \rho(t, q, -p), \quad   q \in \{0,1\}, \,\, p>0,\\
    &\rho (0,q) = \delta(q-q_0) \delta(p-p_0), \quad q \in (0,1), \,\, p \in \mathbb{R}. 
\end{align}
One can show (cf. \cite{FPEexact93}) that the solution of this linear kinetic Fokker-Plank PDE with specular boundary condition is 
\begin{align}\label{sol_fkp1}
  \rho(t,q, p)=  \rho(t,q, p; q_0, p_0) = \sum_{n=-\infty}^{\infty} \left[ F(t, q, p; q_0 + 2n, p_0) 
    + F(t,q, p; -q_0 + 2n, -p_0) \right],
\end{align}
where $F(t,q, p ; q_{0}, p_{0})$ is the fundamental solution of the PDE in the unbounded domain (i.e., $q,p \in  \mathbb{R}$) for a particle starting 
at $(q_{0}, p_{0})$ defined as
\begin{align*}
   F(t,q, p; q_{0}, p_{0}) = \frac{1}{2\pi \sqrt{\det(\boldsymbol{\Sigma}(t))}} \exp\left(-\frac{1}{2} (\mathbf{z} - \boldsymbol{M}(t,q_0, p_0))^T \boldsymbol{\Sigma}(t)^{-1} (\mathbf{z} -  \boldsymbol{M}(t,q_0, p_0))\right) .
\end{align*}
Here $\textbf{z} = (q, p) $, mean vector $\boldsymbol{M}(t,q_0, p_0)$ and covariance matrix $\boldsymbol{\Sigma}(t)$ are given by:
\begin{align*}
    \boldsymbol{M}(t,q_0, p_0) =  \begin{pmatrix} q_{0} + \frac{p_{0}}{\gamma}(1 - e^{-\gamma t}) \\ p_{0} e^{-\gamma t} \end{pmatrix}, \,\,\,\,
    \boldsymbol{\Sigma}(t) = \begin{pmatrix} \sigma_{qq}(t) & \sigma_{qp}(t) \\ \sigma_{pq}(t) & \sigma_{pp}(t) \end{pmatrix}.
\end{align*}
The components of the covariance matrix are:
\begin{align*}
    \sigma_{qq}(t) &= \frac{2}{\gamma^2 } \left(\gamma t - \frac{3}{2} + 2e^{-\gamma t} - \frac{1}{2}e^{-2\gamma t}\right) ,\\
    \sigma_{pp}(t) &=  (1 - e^{-2\gamma t}) , \,\,
    \sigma_{qp}(t) = \sigma_{pq}(t) = \frac{1}{\gamma} (1 - e^{-\gamma t})^2 .
\end{align*}
We recall the important  identity from the method of images that for any $(q_0, p_0)$
$$F(t,1, p ; q_0, p_0) = F(t,1, -p ; 2-q_0, -p_0),$$
which can be used to verify that \eqref{sol_fkp1} satisfies \eqref{fkp1bc}. 
Further, we note that the solution $\rho \in C^\infty((0,\infty)\times[0,1]\times\mathbb{R})$, which in particular means that the corresponding backward Kolmogorov equation with smooth terminal condition $\phi (q,p)$ is smooth on $[0,T]\times [0,1]\times\mathbb{R}$, thus adding another instance to our collection of examples of backward Kolmogorov equations with specular boundary condition satisfying Assumption~\ref{cld_as:4a}.
\end{example}

\subsection{Ergodicity}\label{sec:erg}

Consider (\ref{ecld}) which is a particular case of (\ref{cld}).

\begin{assumption}\label{as:5}
$U(q) \in C^{\infty}(\bar{G})$ and $U(q) > 1$ for all $q \in \bar{G}$.
\end{assumption}
The condition  $U(q) >1$ is imposed for convenience in the analysis but is not necessary and can be relaxed as can be seen in our experiments in Section~\ref{sec:tests}. 

Exploiting the approach used in proving Theorem~3.2 in \cite[Section 3]{MSH02} (see also \cite{TAL02}) one may establish under Assumptions~\ref{cld_as:1} and~\ref{as:5} (note that $\nabla_q U(q)$ is bounded thanks to $G$ being bounded) that (\ref{ecld}) is geometrically ergodic, i.e.  
there exists a unique invariant measure $\mu(dq,dp)$ defined on $G\times \mathbb{R}^{d}$ and  the following inequality holds for any $\varphi (q,p)$, $(q,p) \in \bar G \times \R^d$, bounded in $q$ and with not faster than polynomial growth in $p$  and for any initial point $(q_{0},p_{0})$ and all $T > 0$:
\begin{equation}
    |\mathbb{E}(\varphi(Q(T),P(T))) - \bar{\varphi}| \leq Ke^{-\lambda T},\label{ergo}
\end{equation}
where 
\begin{equation}\label{barvarphi}
\bar{\varphi} = \int_{\mathbb{R}^{d}}\int_{G} \varphi(q,p) \mu(dq, dp). 
\end{equation}
Moreover, the invariant measure $\mu$ has a probability density $\rho_{\infty}$ with respect to Lebesgue measure. In \cite[Section 9.1.5]{a22}, geometric ergodicity is proved in the case of the semi-space $G =(0,\infty) \times \mathbb{R}^{d-1}$.

The stationary Fokker-Plank equation for the density $\rho_{\infty}$ has the form
\begin{equation}
    \frac{\gamma}{\beta} \Delta_{p}\rho_{\infty}-(p\cdot\nabla_{q}\rho_{\infty}) + (\nabla_{q}U(q)\cdot\nabla_{p}\rho_{\infty}) + \gamma (\nabla_{p}\cdot(p\rho_{\infty}))  = 0,\;\;\;\; (q,p) \in G\times \mathbb{R}^{d},  \label{sfkp}
\end{equation}
with boundary condition
\begin{equation}\label{bcsfkp}
    \rho_\infty(q,p) = \rho_{\infty}
    (q,p-2(n_{G}\cdot p)n_{G}), \;\;\;\; (q,p) \in \Sigma^{+}.
\end{equation}
As one may verify, the Gibbs density 
\begin{equation}\label{e2.16}
    \rho_{\infty}(q,p) = \frac{1}{\mathbb{Z}}\exp{\Big(-\beta\big(|p|^{2}/2 + U(q)\big)\Big)}
\end{equation}
indeed satisfies the stationary Fokker-Plank equation (\ref{sfkp}) and the boundary condition (\ref{bcsfkp}). Here $\mathbb{Z}$ is the normalization constant so that $\int_{\mathbb{R}^{d}}\int_{ \bar{G}}\rho_{\infty}(q,p)dq dp =1$. Then the ergodic limit $\bar{\varphi}$ from (\ref{barvarphi}) associated with (\ref{ecld}) can be expressed as 
\begin{equation}\label{barvarphi2}
\bar{\varphi} = \int_{\mathbb{R}^{d}}\int_{G} \varphi(q,p) \rho_{\infty}(q,p) dq, dp. 
\end{equation}

In \cite[Section 9.1.5]{a22}, the stationary Fokker-Plank equation (\ref{sfkp})-(\ref{bcsfkp}) in the case of $G =(0,\infty) \times \mathbb{R}^{d-1}$ was considered.
In \cite{a13}, existence of the solution (in the weak sense) of a McKean-Vlasov Fokker-Plank equation with specular boundary condition was shown.

The above discussion of geometric ergodicity together with numerical methods for (\ref{ecld}) introduced in Section~\ref{cld_sec3} equip us with the tools needed to approximately sample from a target density $\rho(q)$, $q \in \bar{G}$, by choosing $U(q) = -\frac{1}{\beta}\ln (\rho(q))$, $\beta >0$.  

To prove error estimates for computing ergodic limits, we need the following assumption on solutions of the backward Kolmogorov equation with specular boundary condition (\ref{eq2.1})-(\ref{eq2.3}) related to (\ref{ecld}), i.e., of (\ref{eq2.1})-(\ref{eq2.3}) with $b(q,p) = -\nabla_q U(q) - \gamma p$.

\begin{assumption}\label{as:7}
The solution of (\ref{eq2.1})-(\ref{eq2.3}) with $b(q,p) = -\nabla_q U(q) - \gamma p$ satisfies Assumption~\ref{cld_as:4a} and the condition that 
there are positive constants $C$ and $\lambda$ independent of $T$ such that the following bound holds for some $m \ge 1$:
\begin{align}\label{eq:ergKolmest}
    \sum\limits_{l = 1}^{2}\sum\limits_{i + |j| = l }&|D_{t}^{i}D_{q}^{j}u(t,q,p)| +  \sum\limits_{l = 1}^{4}\sum\limits_{|j| = l}|D_{p}^{j}u(t,q,p)|\leq C(1 + |p|^{2m})e^{-\lambda (T-t)}, 
\end{align}
where $D_{x}^{j}u = \frac{\partial^{|j|}u}{\partial x^{j_{1}},\dots,\partial x^{j_{d}}} $, $x = q \in G \;\text{or}\; p \in \mathbb{R}^{d}$ and $j$ is a multi-index.
\end{assumption}

We note that an estimate of the form (\ref{eq:ergKolmest}) in the case of Langevin equations in the whole space is proved in \cite{TAL02}.
\medskip

Let us discuss the relationship between the confined Langevin dynamics (\ref{ecld}) and the reflected gradient SDE (Brownian dynamics with reflection or, in other words, the overdamped reflected Langevin dynamics) \cite{11,17,18,lst23}:
\begin{align}
 dX(t) &= -\nabla  U(X(t))dt + \sqrt{\frac{2}{\beta}} dW(t) + \nu (X(t))I_{\partial G}(X(t)) dL(t), \label{rgsde}
 \end{align}
 where $L(t)$ is the  local time of the process $X(t)$ on the boundary $ \partial G$.
 The solution $X(t)$ of (\ref{rgsde}) belongs to $\bar G$ and it is instantaneously reflected along the internal normal $\nu$ to $\partial G$ when it hits the boundary $\partial G$.
 Under the same conditions as we have imposed on the potential $U$ and the boundary $\partial G$ above, the SDE (\ref{rgsde}) is ergodic with respect to the invariant measure 
 $\rho_{{\rm RSDE}}(x) \propto \exp{(-\beta U(x)}) $ \cite{45}, which coincides with $\rho_{\infty}(q)=\int_{\mathbb{R}^{d}}\rho_{\infty}(q,p)dp$, i.e., both (\ref{ecld}) and (\ref{rgsde}) can be used for sampling 
 from the Gibbs distribution in the position space.
 Moreover, in analogy to the relationship between the Langevin equation and its overdamped limit in the whole $\mathbb{R}^{d}$ space \cite{Nel67} (see also \cite{40,leimkuhler_mathews_15}), the solution $Q(t)$ of (\ref{ecld}) tends in distribution to the solution $X(t)$ of (\ref{rgsde}) when $\gamma \rightarrow \infty $ \cite{costantini1991diffusion,a19, a27} (more precisely, in the corresponding small mass limit).
 In \cite{lst23} we proposed and studied numerical algorithms for sampling from $\rho_{{\rm RSDE}}(x)$, $x \in \bar G$, based on  (\ref{rgsde}) (see also \cite{sharma2022random}).
 The benefit of using kinetic Langevin equations as opposed to overdamped Langevin dynamics in the case of Euclidean space are well established and we expect these to carry over to confined spaces.


\section{Numerical methods}\label{cld_sec3}

In Section~\ref{sec:1st}, we present first-order weak methods for the SDEs (\ref{cld}).
In Section~\ref{sec:2nd}, we propose splitting schemes for the SDEs (\ref{ecld}) which demonstrate (in numerical studies) second-order weak convergence.  In Section~\ref{subsec_3.3}, we compare known numerical methods for (\ref{cld}) with the schemes of Sections~\ref{sec:1st} and~\ref{sec:2nd}, and we also discuss multiple collisions.
We start this section by describing the ingredients needed for approximating collision dynamics. 

Consider a uniform discretization of the time interval $[0,T]$ with time step $h = T/N$, i.e. $t_{n+1} - t_{n} = h$, $ n = 0,\dots, N-1$, and $t_0=0$. We take $Q_{0} \in G$ and $P_{0} \in \mathbb{R}^{d}\backslash \{0_{d}\}$.

Approximating $P(t)$, $Q(t)$ when the position $Q$ is inside the domain can be done by any suitable standard method of weak approximation of SDEs in the whole space (see e.g. \cite{milstein2021stochastic,leimkuhler_mathews_15}).  
However, to construct numerical methods for (\ref{cld}) and (\ref{ecld}), we also need to approximate elastic reflection. Hence the critical part of proposing such methods is to consider steps of a numerical scheme when collisions with $\partial G$ happen.

\textbf{One step with possible collision.}
Let the current state of a Markov chain approximating the solution of (\ref{cld}) be $(q, p) \in \bar{G} \times \mathbb{R}^{d}$. We construct the next state $\cal{Q}$, $\cal{P}$ of the chain as follows.

If $\hat{Q} := q +ps \in G$ for $s \in (0,h]$, 
\begin{align}\label{Ac_step_1}
A_{c}(q,p;h)= (q + ph, p),
\end{align}
else (i.e., if there is  $s \in (0,h]$: $q+ps \in \partial G$) $A_{c}(q,p;h)= ({\cal Q},{\cal P}),$ obtained from
\begin{align}
& Q_{c} = q + p\tau \in \partial G, \label{Ac_step_2}\\
     & {\cal P} = p - 2 (n(Q_{c})\cdot p) n(Q_{c}), \,\,
     {\cal Q} = Q_{c} + (h-\tau){\cal P}, \label{Ac_step_3}
\end{align}
where $\tau=\min_{s \in (0,h]} \{q+ps \in \partial G \}$. 
The random collision time $\tau$  ensures that the intermediate position $Q_{c} \in \partial G$ (note that $\big(p\cdot n_{G}(Q_{c})\big) > 0$). Thereafter, the numerical method follows the step ${\cal P} = p - 2 (n(Q_{c})\cdot p) n(Q_{c})$ 
which results in specular reflection (also known as elastic reflection) of the direction of momentum, and then completes the remaining step in position using the reversed momentum.  

As defined above, if there is more than one solution of   $q + p\tau \in \partial G$, we take the smallest positive value of $\tau$.  For illustration, consider an annular domain in $\mathbb{R}^{2}$: $G = \{ (q_{1}, q_{2})\; |\; r_{1}^{2} < q_{1}^{2} + q_{2}^{2} < r_{2}^{2}\} $ and $\partial G = \{ (q_{1}, q_{2})\; |\; q_{1}^{2} + q_{2}^{2} = r_{1}^{2}\} \; \cup \; \{ (q_{1}, q_{2})\; | \;q_{1}^{2} + q_{2}^{2} = r_{2}^{2}\}$, where $r_{1}$ and $r_{2}$ are positive constants such that $r_{1} < r_{2}$. Let $(q_{1}, q_{2}) = ((r_{1}+r_{2})/2, 0)$ and $ p = (-1, 0)$, then four possible solutions of $ q +\tau p \in \partial G$ are $\tau_{1} = (r_{2} - r_{1}) /2$, $\tau_{2} = (3r_{1} + r_{2})/2 $, $ \tau _{3} = (r_{1} + 3r_{2})/2$ and $\tau_{4} = (r_{1} - r_{2})/2$. Here, $\tau_{4}$ is negative, so we take $\tau = \min \{\tau_{1}, \tau_{2}, \tau_{3}\} = \tau_{1} = (r_{2} -r_{1})/2$. 

We note that we check $\hat{Q} := q  +ps\in G$ for $s \in (0,h]$ because the domain $G$ can be non-convex and we might get a situation when both $q$ and $q+ph $ are in $G$ but the line connecting them crosses $\partial G$ (i.e., it is partly in $G^c$). If $q+ p s \in \partial G$ has no solution or only one solution for $s \in (0, h]$ (e.g., this is the case when $G$ is convex), then the corresponding condition is simplified to $\hat{Q} := q  +ph\in G$. In future we use the indicator function $I_{G^c}(\hat Q)$ by which we will understand that the event   ``there is  $s \in (0,h]$: $q+ps \in G^{c}$'' occurred.

We consider the generalization of the A$_c$ step to the multiple collision setting in Subsection~\ref{sec:mult}.

\subsection{First-order schemes}\label{sec:1st}

In this section we propose two methods, [PA$_{c}$] and [A$_{c}$P]. In the [PA$_c$] scheme the Markov chain  first takes an auxiliary step in momentum $P$ and then updates position $Q$ via an A$_c$ step; the order of updates is reversed in [A$_{c}$P].   We introduce the following notation:
\begin{equation}
    \delta(q,p\; ; h,\xi) = hb(q,p) +  h^{1/2} \sigma \xi,
\end{equation}
where $\xi = (\xi^{1},\dots,\xi^{d})^\intercal $ is an $\mathbb{R}^{d}$-valued random variable such that $\xi^{i}$, $i=1,\dots,d$, are i.i.d. real-valued random variables with all moments bounded and satisfying 
\begin{equation} \label{eq:xicond}
    \mathbb{E}\xi^{i} = \mathbb{E}(\xi^{i})^{3}=0,\;\mathbb{E}(\xi^{i})^{2} = 1.
\end{equation}    
The conditions (\ref{eq:xicond}) are, for instance, satisfied if the $\{ \xi^i\}$ are drawn from the standard normal distribution or as i.i.d. discrete random variables with the distribution:
\begin{equation}
\mathbb{P}(\xi^i=\pm 1)=\frac{1}{2}. \label{eq:xi12}
\end{equation}

In Fig.~\ref{figure_cld_1} we present the proposed schemes. For the sake of clarity, we describe the [PA$_c$] scheme in detail. As already mentioned, the aim is to construct a Markov chain $(t_{k},Q_{k},P_{k})_{(k \geq 0)}$  to approximate  (\ref{cld}) in the weak sense.
At every iteration moving from $(t_{k},Q_{k},P_{k})$ to $(t_{k+1},Q_{k+1},P_{k+1})$, the Markov chain first takes auxiliary steps in momentum 
denoted by $\hat{P}_{k+1}$: 
\begin{align}
    \hat{P}_{k+1} = P_{k} + \delta(Q_{k}, P_{k}\; ; h, \xi_{k+1}),
    \label{cld_eq_3.2}
\end{align}
where $\xi_{k+1}\sim \xi $ and they are mutually independent. 
The next state of the Markov chain, $Q_{k+1}$ and $P_{k+1}$, is evaluated according to A$_c$($Q_{k}$, $\hat{P}_{k+1};h)$, which means that if $\hat{Q}_{k+1}= Q_{k} + s\hat{P}_{k+1} \in G$ for $s \in (0,h] \in G$, then $P_{k+1} = \hat{P}_{k+1}$ and $Q_{k+1} = Q_{k} + h\hat{P}_{k+1}$, else $P_{k+1} = \hat{P}_{k+1} - 2 (n_{G}(Q_{ k+1, c})\cdot \hat{P}_{k+1})n_{G}(Q_{ k+1, c})$ and $Q_{k+1} = Q_{ k+1, c} + (h - \tau)P_{k+1}$, where $ Q_{k+1, c} = Q_{k} + \hat{P}_{k+1}\tau \in \partial G $.  Here, for simplicity of the explanation, we assumed that we can have only one collision within one step, see how the algorithm is adjusted to the case of multiple collisions within one step in Section~\ref{sec:mult} and Algorithm~\ref{A$_c$ step with multiple}.
\begin{figure}[h]

\begin{minipage}[t]{.5\textwidth}

    \begin{flalign} 
   &\textrm{\underline{[PA$_c$]}} \nonumber \\
   & \hat{P}_{k+1} = P_k + \delta(Q_k,P_k;h,\xi_{k+1}) 
   \nonumber\\
   & (Q_{k+1},P_{k+1})=A_{c}(Q_k,\hat{P}_{k+1} ;h) \nonumber
   \end{flalign} 
\end{minipage}%
\begin{minipage}[t]{0.5\textwidth}
  \begin{flalign} 
   &\textrm{\underline{[A$_c$P]}} \nonumber \\
   & (Q_{k+1},\hat P_{k+1})=A_{c}(Q_k,P_{k} ;h) \nonumber \\
   &  P_{k+1} = \hat{P}_{k+1} + \delta(Q_{k+1}, \hat{P}_{k+1}; h ,\xi_{k+1}) \nonumber 
   \end{flalign}   
\end{minipage}%
   \caption{Description of [PA$_c$] and [A$_c$P] schemes.} \label{figure_cld_1}
\end{figure}

Schemes [PA$_c$] and [A$_c$P] have finite-time convergence with first weak order when they are applied to (\ref{cld}). We state below Theorem~\ref{theorem4.1} for scheme [PA$_c$] which is proved in Section~\ref{sec:proof1}. When these schemes are applied to the ergodic SDE (\ref{ecld}), they approximate the ergodic limit with error $\mathcal{O} (h+\exp(-\lambda T))$.  We state below the corresponding Theorem~\ref{theorem4.2} for scheme [PA$_c$] which is proved in Section~\ref{sec:proof2}. We recall that $h=T/N$. 

\begin{theorem}\label{theorem4.1}
 Under Assumptions~\ref{cld_as:1}-\ref{cld_as:4a}, the order of weak convergence of scheme [PA$_c$]  for (\ref{cld}) is $\mathcal{O}(h)$, i.e. the following inequality holds, for sufficiently small $h>0$:
\begin{equation}
    |\mathbb{E}\varphi(Q_{N},P_{N}) - \mathbb{E}\varphi\big(Q(T),P(T)\big)| \leq Ch,
\end{equation}
where 
$(Q(t),P(t))$ is from (\ref{cld}) and $C>0$ is independent of $h$.
\end{theorem}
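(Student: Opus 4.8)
The plan is to follow the standard template for weak-sense convergence of SDE integrators (cf.\ \cite{milstein2021stochastic}), adapted to the presence of elastic collisions, with the backward Kolmogorov equation as the comparison object. Let $u(t,q,p)$ be the solution of the specular boundary value problem (\ref{eq2.1})--(\ref{eq2.3}) with terminal data $\varphi$; under Assumptions~\ref{cld_as:1}, \ref{as:2}, \ref{cld_as:3}, \ref{cld_as:4a} it enjoys the regularity and polynomial-in-$p$ bounds of (\ref{cld_der_bound}) and satisfies the relation $u(t_k,q,p)=\mathbb{E}\,u\big(t_{k+1},\bar Q(t_{k+1}),\bar P(t_{k+1})\big)$, where $(\bar Q,\bar P)$ solves (\ref{cld}) on $[t_k,t_{k+1}]$ from the point $(q,p)$ at time $t_k$. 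Using $u(T,\cdot,\cdot)=\varphi$, $u(0,Q_0,P_0)=\mathbb{E}\varphi(Q(T),P(T))$, and telescoping, one writes $\mathbb{E}\varphi(Q_N,P_N)-\mathbb{E}\varphi(Q(T),P(T))=\sum_{k=0}^{N-1}\mathbb{E}\,\varepsilon_k$ with the one-step error $\varepsilon_k=\mathbb{E}\big[u(t_{k+1},Q_{k+1},P_{k+1})-u(t_{k+1},\bar Q(t_{k+1}),\bar P(t_{k+1}))\mid Q_k,P_k\big]$, so it remains to estimate each $\varepsilon_k$ and sum.

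Two ingredients feed into this estimate. The first is a set of $h$-uniform moment bounds $\sup_{0\le k\le N}\mathbb{E}|P_k|^{2\ell}\le C_\ell$: these are obtained exactly as in the unbounded-domain case from the at-most-linear growth of $b$ (Assumption~\ref{as:2}), using the key fact that elastic reflection is an isometry in $p$, so collision handling does not enlarge the momentum; $Q_k\in\bar G$ makes the position automatically bounded. The second, and the crux of the proof, is Lemma~\ref{bl}: the expected number of steps $k\in\{0,\dots,N-1\}$ in which the segment $\{Q_k+sP_k:s\in(0,h]\}$ (or the exact trajectory on $[t_k,t_{k+1}]$ started from $(Q_k,P_k)$) meets $\partial G$, together with a version of this count weighted by $(1+|P_k|^{2\tilde m})$, is bounded uniformly in $h$. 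Intuitively, after a collision the reflected momentum points strictly into $G$, so the walker leaves a neighbourhood of the boundary and collision steps are sparse; making this quantitative, in terms of $\dist(Q_k,\partial G)$ and $|P_k|$ and using $\partial G\in C^4$, is the main technical obstacle.

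For a single step I split on the event $E_k$ (measurable with respect to $\xi_{k+1}$ and the driving Wiener increment on $[t_k,t_{k+1}]$) that \emph{neither} the numerical step \emph{nor} the exact trajectory reaches $\partial G$. On $E_k$ the [PA$_c$] step coincides with a standard first-order weak scheme for the Langevin SDE on $\mathbb{R}^d$, so Taylor expansion of $u(t_{k+1},\cdot,\cdot)$ combined with the moment conditions (\ref{eq:xicond}) on $\xi$ and the Kolmogorov equation (\ref{eq2.1}) gives the classical local bound of order $h^2(1+|P_k|^{2m})$. On $E_k^c$ I use that $u(t_{k+1},\cdot,\cdot)$ is Lipschitz in $(q,p)$ on bounded-momentum sets (from (\ref{cld_der_bound})): when exactly one collision occurs, the numerical and exact endpoints differ by $O(h)(1+|P_k|)$ -- the collision times and collision points agree up to $O(h)$ because the two trajectories are straight lines perturbed by $O(h)$ and $\partial G$ is $C^4$ -- so this contributes $\le Ch(1+|P_k|^{2\tilde m})\mathbbm{1}_{E_k^c}$; the remaining events (two or more collisions within one step, or the numerical and exact trajectories disagreeing about whether $\partial G$ was hit) are contained in $\{\dist(Q_k,\partial G)\lesssim h|P_k|\}$ and are moreover of probability $o(h)$ per step, hence negligible after summation. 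Collecting terms, the $E_k$-contributions sum to $N\cdot Ch^2\sup_k(1+\mathbb{E}|P_k|^{2m})=O(h)$, and the $E_k^c$-contributions sum to $Ch\,\mathbb{E}\big[\sum_{k}(1+|P_k|^{2\tilde m})\mathbbm{1}_{E_k^c}\big]=O(h)$ by the weighted collision-count bound of Lemma~\ref{bl}; this yields $|\mathbb{E}\varphi(Q_N,P_N)-\mathbb{E}\varphi(Q(T),P(T))|\le Ch$.

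The genuinely hard part is Lemma~\ref{bl} -- the momentum-weighted bound on the number of steps carrying a collision -- and, coupled to it, the careful bookkeeping of the small-probability event where the numerical and exact trajectories disagree about whether the boundary was hit within a step; the collision-free interior analysis is routine, as is the derivation of the momentum moment bounds once one exploits that elastic reflection preserves $|P|$.
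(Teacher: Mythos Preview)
Your overall architecture matches the paper's: telescoping via the backward Kolmogorov solution $u$, moment bounds for $P_k$ (using that the collision map is an isometry in $p$), and the weighted collision-count Lemma~\ref{bl} to control the boundary contributions. What differs is the one-step analysis. The paper never compares the numerical endpoint to the exact SDE endpoint; instead it directly Taylor-expands $u(t_{k+1},Q_{k+1},P_{k+1})-u(t_k,Q_k,P_k)$, splits into the P-update and the A$_c$-update, and cancels the leading $O(h)$ terms using the PDE (\ref{eq2.1}). Crucially, the collision step is handled through the \emph{specular boundary condition} (\ref{eq2.3}): at the collision point one has $u(t_{k+1},Q_c,P_{k+1})=u(t_{k+1},Q_c,\hat P_{k+1})$, which lets the Taylor expansion ``pass through'' the reflection and produces a residual proportional to $h\,(n_G(Q_c)\cdot\hat P_{k+1})$ (Lemma~\ref{bouncary_collision_error_lemma}); summing this over $k$ is exactly what Lemma~\ref{bl} controls.

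Your route via pathwise comparison has a real gap at the collision step. The claim that on $E_k^c$ ``the numerical and exact endpoints differ by $O(h)(1+|P_k|)$'' is not correct for the momentum: the exact trajectory reflects at a random time $\tau^\ast$ and then accrues the Brownian increment $\sigma(W(t_{k+1})-W(t_k+\tau^\ast))$, whereas [PA$_c$] first absorbs the \emph{whole} increment into $\hat P_{k+1}$ and only then reflects. The mismatch is $R_n(\sigma\Delta W)-\sigma\Delta W$ with $\Delta W=W(t_{k+1})-W(t_k+\tau^\ast)$, which is $O(\sqrt{h})$ pathwise, not $O(h)$. Feeding this into the Lipschitz bound for $u$ and summing over the $O(1)$ collision steps yields only $O(\sqrt{h})$. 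To recover $O(h)$ you would need to show that this $O(\sqrt{h})$ term has conditional mean $O(h)$ --- which is plausible since $\Delta W$ is centred given $\mathcal F_{t_k+\tau^\ast}$ --- but that requires exactly the kind of expansion of $u$ in the $p$-variable that the paper does, and ultimately leans on (\ref{eq2.3}). The ``disagreement'' event (one trajectory hits $\partial G$, the other does not) is likewise not obviously $o(h)$ per step without further work; the paper's approach never needs to isolate it. In short, the missing ingredient is the specular boundary condition on $u$: that, not a pathwise endpoint comparison, is what makes the collision step contribute $O(h)$.
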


\begin{theorem}\label{theorem4.2}
  Let Assumptions~\ref{cld_as:1}, \ref{cld_as:3}, \ref{as:5}-\ref{as:7} hold. For sufficiently small $h>0$ the following error estimate holds for scheme [PA$_c$] applied to (\ref{ecld}):
\begin{equation}
    |\mathbb{E}\varphi(Q_{N},P_{N}) - \bar{\varphi}| \leq C(h+\exp(-\lambda T)),
\end{equation}
where $C>0$ and $\lambda >0$ are constants independent of $T$ and $h$. 
\end{theorem}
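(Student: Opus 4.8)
The plan is to derive the ergodic error estimate in Theorem~\ref{theorem4.2} by combining the finite-time weak-order-one bound of Theorem~\ref{theorem4.1} with the geometric ergodicity of both the continuous dynamics~\eqref{ecld} and the Markov chain generated by [PA$_c$]. The standard strategy (cf. \cite{TAL02,MSH02,milstein2021stochastic}) is a telescoping argument that propagates a one-step error backward through the exact evolution, using exponential decay to sum the local errors into a bound that is uniform in $T$. Concretely, one writes, for the test function $\varphi$ and $u(t,q,p)$ the solution of the backward Kolmogorov problem~\eqref{eq2.1}--\eqref{eq2.3} with $b(q,p)=-\nabla_q U(q)-\gamma p$ and terminal time $T$,
\[
\mathbb{E}\varphi(Q_N,P_N) - \mathbb{E}\varphi(Q(T),P(T)) = \sum_{k=0}^{N-1}\Big(\mathbb{E}\,u(t_{k+1},Q_{k+1},P_{k+1}) - \mathbb{E}\,u(t_k,Q_k,P_k)\Big),
\]
where each summand is the one-step weak error of the scheme applied at time $t_k$ to the true solution operator. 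From the proof of Theorem~\ref{theorem4.1} (i.e. the one-step analysis in Section~\ref{sec:proof1}) each such term is bounded by $C h^2 (1+\mathbb{E}|P_k|^{2m})$, but now the crucial point is that the constant $C$ must carry the factor $e^{-\lambda(T-t_{k+1})}$ supplied by Assumption~\ref{as:7}, since the spatial/momentum derivatives of $u$ appearing in the local-error Taylor expansion decay at that rate. Summing the geometric series $\sum_k h^2 e^{-\lambda(T-t_{k+1})} \le C h$ then gives the $\mathcal{O}(h)$ part; the $\exp(-\lambda T)$ part comes from~\eqref{ergo}, namely $|\mathbb{E}\varphi(Q(T),P(T))-\bar\varphi|\le Ke^{-\lambda T}$, by the triangle inequality.

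The steps I would carry out, in order, are: (i) invoke the geometric ergodicity~\eqref{ergo} of~\eqref{ecld} to replace $\bar\varphi$ by $\mathbb{E}\varphi(Q(T),P(T))$ up to $Ke^{-\lambda T}$; (ii) set up the telescoping identity above against $u$, being careful that $u(t_N,\cdot)=\varphi$ and $u(t_0,Q_0,P_0)=\mathbb{E}\varphi(Q(T),P(T))$ because $u$ is exactly the conditional expectation of $\varphi$ under~\eqref{ecld}; (iii) bound each one-step increment by re-running the one-step error estimate from the proof of Theorem~\ref{theorem4.1}, but keeping explicit track of the $e^{-\lambda(T-t_{k+1})}$ weight coming from Assumption~\ref{as:7} on the derivatives of $u$, so that the $k$-th term is $\le C h^2 e^{-\lambda(T-t_{k+1})}(1+\mathbb{E}|P_k|^{2m})$; (iv) control the momentum moments $\mathbb{E}|P_k|^{2m}$ uniformly in $k$ and $N$ — this requires a Lyapunov/moment bound for the chain, which follows from the friction term $-\gamma p$ in $b$ together with the fact that collisions preserve $|P|$ (elastic reflection only rotates the momentum) and hence cannot increase momentum norms; (v) sum the geometric series $h^2\sum_{k=0}^{N-1} e^{-\lambda(T-t_{k+1})} = h^2 \cdot \frac{e^{-\lambda h}(1-e^{-\lambda T})}{1-e^{-\lambda h}} \le C h$ for small $h$; (vi) combine (i) and (v).

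The main obstacle — and the place where confinement genuinely enters — is step (iii)--(iv): making the one-step error analysis of Theorem~\ref{theorem4.1} work with the $T$-independent exponentially decaying constants. Two subtleties require care. First, the local weak error on a step that encounters a collision involves the random collision time $\tau$ and the reflection map, and one must verify (as in the finite-time proof, presumably via Lemma~\ref{bl} quantifying the number of collision steps) that the contribution of collision steps is still $\mathcal{O}(h^2)$ per step and, crucially, still carries the $e^{-\lambda(T-t_{k+1})}$ weight — this is where the regularity of $u$ up to the boundary (the $C^{0,1,0}$ part of Assumption~\ref{cld_as:4a}, strengthened by Assumption~\ref{as:7}) and the specular boundary condition~\eqref{eq2.3} are used to Taylor-expand $u$ across the reflection. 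Second, the moment bound~$\sup_k \mathbb{E}|P_k|^{2m} < \infty$ uniform in $N$ needs its own short argument: one shows $\mathbb{E}|P_{k+1}|^{2m}\le (1-c\gamma h)\mathbb{E}|P_k|^{2m} + C h$ using that the O/B-type momentum update contracts momentum in expectation (because of damping) and that $A_c$ leaves $|P|$ unchanged, so the chain has a uniform-in-time moment bound. Once these two points are established, the remaining summation and triangle-inequality steps are routine.
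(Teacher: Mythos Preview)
Your overall architecture is correct and matches the paper's: triangle inequality against $\mathbb{E}\varphi(Q(T),P(T))$ via~\eqref{ergo}, then a telescoping sum of one-step errors of $u$ carrying the decay factor $e^{-\lambda(T-t_k)}$ from Assumption~\ref{as:7}, together with a uniform-in-$N$ moment bound for $P_k$ (the paper's Lemma~\ref{cld_lemma_5.6}, proved exactly as you sketch in~(iv)). But step~(iii) contains a genuine misconception that would break the argument as written.

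The one-step error on a step where a collision occurs is \emph{not} $\mathcal{O}(h^2)$; it is only $\mathcal{O}(h)$. The paper's one-step estimate (Lemma~\ref{lemma5.8}) reads
\[
\big|\mathbb{E}[u(t_{k+1},Q_{k+1},P_{k+1})-u(t_k,Q_k,P_k)\mid Q_k,P_k]\big|
\le Ce^{-\lambda(T-t_k)}\Big(h^2(1+|P_k|^{2m}) + h\,\mathbb{E}[\Theta_k\mid Q_k,P_k]\Big),
\]
where $\Theta_k=\sum_{i=1}^{\kappa_k}(n_G(Q_{k+1,c,i})\cdot\tilde P_{k+1,i-1})(1+|P_{k+1}|^{2m})I_{G^c}(\hat Q_{k+1})$ is the collision contribution. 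Your geometric-series summation in~(v) handles only the first piece. The second piece, summed over $k$, becomes $Che^{-\lambda T}\mathbb{E}\sum_{k=0}^{N-1}e^{\lambda t_k}\Theta_k$, and closing the argument requires showing this \emph{exponentially weighted} sum is $\le Ce^{\lambda T}$ with $C$ independent of $T$ and $h$. This is Lemma~\ref{ebl}, and it does not follow from Lemma~\ref{bl} by ``carrying the weight through'': it needs its own Lyapunov construction, with a function of the shape $V(t,q,p)=Ae^{\lambda t}|p|^{2\ell}+K(e^{\lambda T}-e^{\lambda t})+(B(q)\cdot p)(1+|p|^{2m})e^{\lambda t}$ (here $B$ extends the outward normal into $\bar G$), where the damping $-\gamma p$ is used to dominate the $e^{\lambda t}$ growth. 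Without this lemma the collision contributions are $\mathcal{O}(h)$ per collision step with $\mathcal{O}(1)$ collision steps per unit time, so over $[0,T]$ they would accumulate to $\mathcal{O}(hT)$ rather than $\mathcal{O}(h)$; the weighted bound is exactly what prevents this blow-up.
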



\subsection{\texorpdfstring{[A$_c$, B, O] splitting schemes}{[A_c, B, O] splitting schemes}}
\label{sec:2nd}

Without a reflective boundary, 
splitting schemes of second weak order for the Langevin equations, which rely on a concatenation of the exact integration for the drift and impulse and of the Ornstein-Uhlenbeck process at each time step, are known for their excellent performance in molecular dynamics and statistics applications (see, e.g. \cite{GNT03,nawaf_houman_10,BenM13,leimkuhler_mathews_15,milstein2021stochastic} and references therein). Hence, it is natural to extend those methods to confined Langevin dynamics (\ref{ecld}). The [A$_c$, B, O] splitting schemes proposed in this subsection degenerate to the standard deterministic St\"{o}rmer-Verlet schemes with collision step A$_c$ when (\ref{ecld}) degenerates to the (deterministic) Hamiltonian system with elastic collisions. Such deterministic schemes have been shown to be first order accurate due to the collision step A$_c$ having local error of ${\cal O}(h)$ \cite{BrianBen2000,leimkuhler_mathews_15} (see also discussion in Section~\ref{prlim_dis_sec}).
Because of this, stochastic numerics experience \cite{milstein2021stochastic} suggests that finite-time weak convergence of the [A$_c$, B, O]  schemes for (\ref{ecld}) should be first order, too. However, the symmetric [A$_c$, B, O]  schemes for (\ref{ecld}) also have weak order $2$, as we demonstrate in several carefully constructed and performed numerical experiments (Section~\ref{sec:tests}) and theoretically justify when $G$ is a half-space, i.e. $G=(0,\infty)\times \mathbb{R}^{d-1}$, in Section~\ref{sec:2ndorder}. 

Let $\xi = (\xi^{1},\dots,\xi^{d})^\intercal $ be an $\mathbb{R}^{d}$ valued random variable  such that $\xi^{i}$, $i=1,\dots,d$, are i.i.d. real-valued random variables with bounded all moments and satisfying 
\begin{equation} \label{eq:xicond2}
    \mathbb{E}\xi^{i} = \mathbb{E}(\xi^{i})^{3}=\mathbb{E}(\xi^{i})^{5}=0, \;\;\mathbb{E}(\xi^{i})^{2} = 1, \;\; \mathbb{E}(\xi^{i})^{4} = 3.
\end{equation}    
The conditions (\ref{eq:xicond}) are, for instance, satisfied for $\xi^i$ having the standard normal distribution or being i.i.d. discrete random variables with the distribution:
\begin{equation}
P(\xi^i=\pm \sqrt{3})=\frac{1}{6}, \,\, P(\xi^i=0)=\frac{2}{3}. \label{eq:xi23}
\end{equation}
In Fig.~\ref{fig:2ndorder}, $\xi_{k}\sim \xi $ and $\zeta_{k}\sim \xi $  and they all are mutually independent. 
Introduce the notation 
\[
O(p;h,\xi,\gamma):=p e^{- \gamma h} +\sqrt{\frac{2\gamma}{\beta }(1-\mathrm{e}^{-2\gamma h})}\xi .
\]
The three proposed methods are presented in Fig.~\ref{fig:2ndorder}. It is straightforward to write down the other three possible symmetric concatenations, namely [BOA$_c$OB], [A$_c$BOBA$_c$], [A$_c$OBOA$_c$], which we test in Experiment~\ref{exper2}.

\begin{figure}[h]
\centering
   \begin{minipage}{.45\textwidth}
     \begin{flalign} 
   &\textrm{\underline{[OBA$_c$BO]}} \nonumber \\
   & P_{k+1/2} = O(P_k;h/2,\xi_{k+1},\gamma)  \nonumber\\
   & \hat{P}_{k+1/2} =     P_{k+1/2} - \frac{h}{2} \nabla_q U(Q_k) \nonumber\\
    &  (Q_{k+1}, P_{k+1/2}^{r}) =  A_c(Q_k,\hat{P}_{k+1/2};h) \;\;\; \nonumber \\
    & \hat{P}_{k+1} = \hat{P}_{k+1/2}^{r} - \frac{h}{2} \nabla_q U(Q_{k+1}) \nonumber \\ 
    & P_{k+1} = O(\hat{P}_{k+1};h/2,\zeta_{k+1},\gamma) \nonumber
   \end{flalign} 
\end{minipage}%
\begin{minipage}{0.45\textwidth}
  \begin{flalign} 
   &\textrm{\underline{[BA$_c$OA$_c$B]}} \nonumber \\
   & P_{k+1/2} =  P_k - \frac{h}{2} \nabla_q U(Q_k)\nonumber\\
    & (Q_{k+1/2},P_{k+1/2}^{r}) =  A_c(Q_k,P_{k+1/2};h/2)   \;\;\; \nonumber \\
      & \hat{P}_{k+1/2} =O(P_{k+1/2}^{r};h,\xi_{k+1},\gamma)   \nonumber\\
      &  (Q_{k+1},P_{k+1}^{r}) =  A_c(Q_{k+1/2},\hat{P}_{k+1/2};h/2)  \nonumber \\
    &P_{k+1} =  P_{k+1}^{r} - \frac{h}{2} \nabla_q U(Q_{k+1}) \nonumber
   \end{flalign} 
\end{minipage}%
\\
\centering
\begin{minipage}{0.3\textwidth}
 \begin{flalign} 
   &\textrm{\underline{[OA$_c$BA$_c$O]}} \nonumber \\
   & P_{k+1/2} = O(P_k;h/2,\xi_{k+1},\gamma) \nonumber\\
   & (Q_{k+1/2}, P_{k+1/2}^{r}) =  A_c(Q_k,P_{k+1/2};h/2) \nonumber \\
   & \hat{P}_{k+1/2} =    \nonumber P_{k+1/2}^{r} - h \nabla_q U(Q_{k+1/2})\\
   &  (Q_{k+1}, P_{k +1}^{r}) =  A_c(Q_{k+1/2},\hat{P}_{k+1/2};h/2) \nonumber \\ 
    & P_{k+1} = O(P_{k+1}^{r};h/2,\zeta_{k+1},\gamma) \nonumber
   \end{flalign} 
    \end{minipage}
    
     \caption{Description of [OBA$_c$BO], [BA$_c$OA$_c$B], [OA$_c$BA$_c$O] schemes for (\ref{ecld}).}
\label{fig:2ndorder}
\end{figure}

\subsection{Comparison with other numerical integrators for (\ref{cld})}\label{subsec_3.3}

Here we present a comparison of previously known schemes for (\ref{cld}) with the numerical integrators proposed in the previous two subsections.

For $G = (0, \infty)$,
in \cite{a10} (see also \cite{a9, a27}), the authors studied a symmetrized scheme in which, given $(q,p)$ at time $t_{0}$, the approximate position is always updated as 
\begin{align}
\bar Q(t_{0}+h) = |q + ph|. \label{3.9eq}
\end{align}
At every step,  $\theta = t_{0} - \frac{q}{p}$ is calculated. A collision happens if $\theta \in (t_0, t_0+h)$. In that case  
\begin{align}
   \bar Q(\theta) = 0, \,\,\,  \bar {P}(\theta -) = p + b(q,p) (\theta - t_0) + \sigma (W(\theta) - W(t_{0})), \,\,\,
     \bar {P}(\theta) = - \bar {P}(\theta -), \label{eq:bossymeth}\\
     \bar {P}(t+h) = p + b(\bar Q(\theta),\bar P(\theta)) (t_0+h - \theta) + \sigma (W(t_0+h) - W(\theta)) . \notag
\end{align}
If no collision occurs then $ \bar {P}(t_0+h) = p + b(q,p) h + \sigma (W(t_0+h) - W(t_0)) $ (i.e., the straightforward Euler update).
This scheme was generalized to half-space in higher dimensions, i.e. $ G = (0,\infty)\times \mathbb{R}^{d-1}$, in \cite{a9, a10,a27}. 

The main difference between the above scheme (\ref{eq:bossymeth}) and our schemes lies in the update of momentum. In the scheme above, it is possible for the momentum to get reversed already in the interval $(t_0, \theta)$ due to a realized large Wiener increment. In such a case the update $\bar {P}(\theta) = - \bar {P}(\theta-)$ results in a direction towards the negative axis, which leads to multiple collisions during the time interval $ (t_0, t_0+h)$. However, this pathological situation does not arise, by construction, in the schemes [A$_c$P], [PA$_c$] and the splitting schemes of Section~\ref{sec:2nd} if implemented for $G = (0,\infty)\times \mathbb{R}^{d-1}$. We further observe that if one extends the symmetrized reflection of (\ref{3.9eq}) around the boundary in curved domains, then the symmetrized reflection around the boundary along the normal does not imitate specular reflection of  (\ref{cld}), unlike the schemes of Sections~\ref{sec:1st} and \ref{sec:2nd}. 
We note that the proof technique in \cite{a10} for weak convergence of (\ref{eq:bossymeth}) relies on being able to write a continuous-time version of the approximation that solves an SDE. This approach is only possible for a narrow class of numerical methods and our geometric integrators do not belong to that class. 

 Further, some penalized schemes for (\ref{cld}) are also proposed in \cite{a27} without analysis. It is known (see \cite{2,milstein2021stochastic, lst23} and references therein) that symmetrized reflection and penalized schemes are good candidates to  approximate the reflected SDEs (\ref{rgsde}),  in which the local time $L(t)$ is responsible for keeping the trajectories $X(t)$ in $\bar G$. In contrast, the confinement of position $Q(t)$ in $ \bar G$ is achieved in (\ref{cld}) by the c\'{a}dl\'{a}g  process $\mathcal{R}(t)$ whose explicit expression is known via values of $(P(t),Q(t))$. 
 In comparison with  the existing schemes, all our schemes are constructed utilizing the expression for  $\mathcal{R}(t)$, which, in particular, ensures  conservation of (conjugate) momenta in the event of collision, i.e.,  
\begin{equation}\label{eq:PhatP}   
|P_{k}|^{2} = |\hat{P}_{k}|^{2}I_{\bar{G}}({\hat Q}_{k}) + |\hat{P}_{k} - 2(n^{\pi}\cdot \hat{P}_{k})n^{\pi}|^{2}I_{\bar{G}^{c}}({\hat Q}_{k})= |\hat{P}_{k}|^{2}, 
\end{equation}
where $n^{\pi} = n_{G}({Q}_{c, k})$. 
 Since the collision is interlaced in the one-step procedure of the discrete dynamics,  estimates related to the average number of collisions become crucial for the global error analysis (see Lemmas~\ref{bl} and \ref{ebl}).  

Although we present our results in a bounded domain $G$ with boundary $\partial G$, the results can be extended to the half-space $G = (0,\infty)\times \mathbb{R}^{d-1}$ or any other unbounded domain with boundary satisfying Assumption~\ref{cld_as:1}. The only additional requirement is moment bounds for $Q_k$ (moments of $Q(t)$ are typically bounded for most forces $b$ and potentials $U$ of interest), which can be obtained under a global Lipschitz assumption on $\nabla_q U$ or replaced by using the concept of rejecting trajectories \cite{a23,milstein2021stochastic} in the non-globally Lipschitz case. 

\subsection{Multiple collisions}\label{sec:mult}

 In our schemes,  multi-collisions in a single time-step can occur for a bounded domain $G$ when $|p|$ is large and/or $(n(Q_{c}) \cdot p)$ (see A$_c$ step) is small, i.e. when with sufficiently large speed, the particle collides with the curved wall in the (approximately) grazing manner (see Figure~\ref{multi_collis_figure}).    In this case $Q_{1}$ calculated in A$_c$ will not lie in $G$. We note that when $\partial G$ is a convex polytope, we can always choose $h>0$  and initial speed $|p|$ sufficiently small such that multi-collisions occur with exponentially low probability as in this case multi-collisions can only happen when $|p|$ is very large. There is no possibility in our schemes for multi-collisions in half-space (unlike \eqref{3.9eq}-\eqref{eq:bossymeth}), but when $\partial G$ is curved, geometry comes into the picture and it may potentially result in multi-collisions in the discrete dynamics. Obviously, our schemes can be adapted to this situation by applying the A$_c$ step recursively.  We define this recursion explicitly.  
 
 Let us denote the current state of a Markov chain approximating the solution of (\ref{cld}) by $(q, p) \in \bar{G} \times \mathbb{R}^{d}$. Given $(q,p)$, let us denote the number of collisions that may happen in time interval $h$ by $\kappa := \kappa (q,p,h)$.  The detailed construction of the next state  ($Q_{1}$, $P_{1}$) of the chain is in Algorithm~\ref{A$_c$ step with multiple} which is multi-collision generalization of (\ref{Ac_step_1})-\eqref{Ac_step_3}.

 \begin{figure}[H]
     \centering 
     \begin{tikzpicture}[
    scale=1.1,
    point/.style={circle, fill=cyan!50!blue!60, inner sep=2pt},
    guide/.style={densely dotted, thick, darkgray},
    lbl/.style={text=black, font=\Large},       
]

    \coordinate (q) at (0,0);
    \coordinate (Qc1) at (2.5, 1.8);
    \coordinate (Qc2) at (5.5, 2.2);
    \coordinate (Qc3) at (7.0, 0.8);
    \coordinate (Qc4) at (6.6, -0.8);
    \coordinate (Q1) at (5.2, -2.2); 

    \coordinate (Qh1) at ($(q)!2.8!(Qc1)$);
    \coordinate (Qh2) at ($(Qc1)!2.3!(Qc2)$);
    \coordinate (Qh3) at ($(Qc2)!2.2!(Qc3)$);
    \coordinate (Qh4) at ($(Qc3)!2.0!(Qc4)$);

    \draw[cyan!50!blue!60, line width=2.5pt] 
        (0.5, 0.8) 
        .. controls (1.5, 1.5) and (2.0, 1.7) .. (Qc1)
        .. controls (3.5, 2.1) and (4.5, 2.3) .. (Qc2)
        .. controls (6.5, 2.0) and (6.8, 1.5) .. (Qc3)
        .. controls (7.1, 0.0) and (6.8, -0.4) .. (Qc4)
        .. controls (6.4, -1.5) and (5.5, -2.5) .. (4.5, -3.5);

    \node[lbl, anchor=north east] at (4.5, -3.5) {$\partial G$};

    
    \draw[guide] (q) -- (Qh1);
    \node[lbl, anchor=north west, xshift=2pt] at ($(q)!0.5!(Qc1)$) {$\tau_1$};

    \draw[guide] (Qc1) -- (Qh2);
    \node[lbl, anchor=north, yshift=-8pt] at ($(Qc1)!0.5!(Qc2)$) {$\tau_2$};

    \draw[guide] (Qc2) -- (Qh3);
    \node[lbl, anchor=north east, xshift=-2pt, yshift=-2pt] at ($(Qc2)!0.5!(Qc3)$) {$\tau_3$};

    \draw[guide] (Qc3) -- (Qh4);
    \node[lbl, anchor=east, xshift=-5pt] at ($(Qc3)!0.5!(Qc4)$) {$\tau_4$};

    \draw[guide] (Qc4) -- (Q1);
    \node[lbl, anchor=east, xshift=-4pt] at ($(Qc4)!0.45!(Q1)$) {$h-\vartheta_4$};

    \foreach \p in {q, Qc1, Qc2, Qc3, Qc4, Q1, Qh1, Qh2, Qh3, Qh4}
        \node[point] at (\p) {};

    
    \node[lbl, below, yshift=-5pt] at (q) {$q$};
    
    \node[lbl, above left] at (Qc1) {$Q_{c,1}$};
    \node[lbl, above, yshift=3pt] at (Qc2) {$Q_{c,2}$};
    \node[lbl, right, xshift=5pt] at (Qc3) {$Q_{c,3}$};
    \node[lbl, right, xshift=5pt] at (Qc4) {$Q_{c,4}$};
    
    \node[lbl, right, xshift=3pt] at (Qh1) {$\widehat{Q}_1$};
    \node[lbl, right, xshift=3pt] at (Qh2) {$\widehat{Q}_2$};
    \node[lbl, right, xshift=3pt] at (Qh3) {$\widehat{Q}_3$};
    \node[lbl, below, yshift=-5pt] at (Qh4) {$\widehat{Q}_4$};
    
    \node[lbl, below, xshift=-12pt,yshift=+2pt] at (Q1) {$Q_1$};

    \node[lbl] at (2.5, -1.0) {$G$};

\end{tikzpicture}
\caption{A depiction of multi-collisional step of Algorithm~\ref{A$_c$ step with multiple}.}
   \label{multi_collis_figure}
 \end{figure}

\begin{algorithm}\caption{A$_c$ step with multiple collisions}\label{A$_c$ step with multiple}
\small  
\setlength{\baselineskip}{0.9\baselineskip}
\begin{algorithmic}
\Require $q$, $p$, $h$
\Ensure $q \in G$, $ t = t_0$
\If{$q + ps \in G$ for all $s \in (0,h]$}
\State ${\cal Q} \gets q+ ph $ and ${\cal P} = p$
\Else{} 
\State $ i \gets 0 $, $\vartheta_0 \gets 0$
\State $\tilde{Q}_i \gets q$, $\tilde{P}_i \gets  p$
\While{$\hat{Q}_i := \tilde{Q}_i + s \tilde{P}_i \in {G}^c$ for some $s \in (0, h- \vartheta_i]$}
\State Find minimum $\tau_{i+1} $ such that $Q_{c, i+1} \gets \tilde{Q}_i + \tilde{P}_i \tau_{i +1} \in \partial G $
\State $\vartheta_{i+1} \gets \vartheta_i + \tau_{i +1} $
\State Reflect the momentum i.e. $ \tilde{P}_{i+1} \gets \tilde{P}_i -2 (n(Q_{c, i+1}) \cdot \tilde{P}_i) n(Q_{c, i+1})$
\State $\tilde{Q}_{i+1} \gets Q_{c, i+1}$
\State $i \gets i+1$.
\EndWhile
\State ${\cal Q} \gets  Q_{c, i} + (  h -\vartheta_{i})\tilde{P}_{i}$ and ${\cal P} \gets \tilde{P}_i $
\EndIf
\end{algorithmic}
\end{algorithm}

We can write the procedure presented in Algorithm~\ref{A$_c$ step with multiple} of obtaining $Q_1$ in the following compact form:
\begin{align}
    {\cal Q} = q + \tau_1 \tilde{P}_0 +  \tau_2 \tilde{P}_1 + \dots + \tau_{\kappa} \tilde{P}_{\kappa-1}  + (h -\vartheta_{\kappa})\tilde{P}_{\kappa},
\end{align}
where $\tilde{P}_0 = p$, $\tilde{P}_i$ are computed recursively  as follows:
\begin{align}
    \tilde{P}_{i} &= \tilde{P}_{i-1} -2 (n(Q_{c,i}) \cdot \tilde{P}_{i-1})n(Q_{c, i}), \quad i=1,\dots, \kappa, \label{cld_new_eqn_3.11}
\end{align}
and 
\begin{align}
    \vartheta_{\kappa} &= \tau_1 + \dots + \tau_{\kappa}.
\end{align}
Therefore, we have
\begin{align*}
    {\cal Q} = q + \tau_1 \tilde{P}_0 +  \tau_2 \tilde{P}_1 + \dots + \tau_{\kappa -1} \tilde{P}_{\kappa-2} +     
    (h - \vartheta_{\kappa-1})\tilde{P}_{\kappa -1} - 2(h -\vartheta_{\kappa})(n(Q_{c,\kappa}) 
    \cdot \tilde{P}_{\kappa-1})n(Q_{c,\kappa}),
\end{align*}
where we have used $\tau_{\kappa} \tilde{P}_{k-1} + (h -\vartheta_{k}) \tilde{P}_{k-1} = (h-\vartheta_{k-1}) \tilde{P}_{k-1} $. 
Doing the same computation $\kappa-1$ times, we obtain
\begin{align}
    {\cal Q} = q + h \tilde{P}_0 -2 \sum_{i=1}^{\kappa} (h -\vartheta_{i})(n(Q_{c,i}) \cdot \tilde{P}_{i-1})n(Q_{c,i}),
\end{align}
which implies
\begin{align}\label{cld_new_eq_3.16}
    |{\cal Q} - q| \leq h |p| + 2 \sum_{i=1}^{\kappa} (h -\vartheta_{i})(n(Q_{c,i}) \cdot \tilde{P}_{i-1}).
\end{align}


 The question that remains to be answered is  how many times we need to utilize this recursion.  Let us again consider the one-step procedure of [PA$_c$] scheme: depending on the update of the momentum P,  A$_c$ is deterministic which means the dynamics becomes billiard-like, i.e., straight line motion between collision points. 
 
 For the sake of the discussion here, let us denote by $(\tilde{q}, \tilde{p})$ the position and new momentum, respectively, after the P step in [PA$_c$].  Also, denote the continuous-time dynamics evolving according to the A$_c$  step by $(\tilde{Q}(t),  \tilde{P}(t))$. As already highlighted, the dynamics of $\tilde{Q}(t)$ follows a straight path, and indeed is the exact solution of the following ODE system:
 \begin{align*}
     d\tilde{Q}(t) &= \tilde{P}(t) dt, \;\;\;\; \tilde{Q}(t_{0}) = \tilde{q},\\
     d \tilde{P}(t) &= d \mathcal{R}(t),  \;\;\;\; \tilde{P}(t_{0}) = \tilde{p},
 \end{align*}
 in the interval $(t_0, t_0 + h)$. It is clear that the dynamics in the duration of $h$ may have multiple collisions. Using measure-theoretic arguments, it is proved in \cite[p.~5]{pulvirenti_elastic_collision_}, that the Lebesgue measure of a set of configurations $(\tilde{q}, \tilde{p})$ for which one encounters infinite collisions in finite time (in the above case in time  from $t_0$ to $ t_0 + h $) and for which the dynamics hit the boundary with zero normal component of velocity is zero. Also, see \cite[Theorem~2.1 and Theorem~2.2]{costantini1991diffusion} where the arguments of \cite{pulvirenti_elastic_collision_} are used to show well-posedness of a physical transport Markov process with jumps with specular reflection boundary condition. Obtaining an estimate on the number of collisions of billiard dynamics under different settings (e.g., curvature, dispersion on the boundary, dimension, initial conditions, final time)  has been a major topic of research in dynamical systems (see \cite{tabachnikov2010arbeitsgemeinschaft}). We mention famous results from the dynamical systems literature in  regard to the number of collisions: \cite{sinai1978billiard} proves that a billiard particle escapes from a convex polyhedral angle and obtains an upper bound  on number of collisions, \cite{burago1998uniform} obtains a uniform estimate on number of collisions in a domain with semi-dispersing boundaries, an interested reader may also explore the works \cite{boldrighini1978billiards, tabachnikov2005geometry}.  
  
 With the discussion above, in our stochastic setting, it is expected that the number of trajectories corresponding to the proposed schemes which experience multiple collisions in a single step will be negligible in the  small $h$ limit assuming a light-tailed initial distribution of $p$ (for example a Gaussian distribution) and, indeed, this has been confirmed in our experiments (see Section~\ref{sec:tests}). For simplicity, we rejected trajectories for which more than one collision occurs in a step of the Markov chain.  In experiments for the ergodic case (i.e., during long time simulations) on a circular domain of radius two in two dimensions, we encountered multiple collisions in less than $10$ out of $10^6$ simulated trajectories for $ 0.2 < h < 0.5 $, and we observed no steps with multiple collisions for $h<0.2$, i.e. the effect of multiple collisions in our schemes is seen to be  negligible, as expected. Similar results were seen in the case where we chose $\nabla_q U$ to deliberately push the dynamics towards the boundary to enhance the number of collisions (see Experiment~\ref{exper4}). In practice, one can also fix a constant $L$ and truncate the recursion of Algorithm~\ref{A$_c$ step with multiple} when the number of collisions in a single step crosses $L$. 

However, when it comes to error analysis in the next section, we need certain bounds.  For our numerical error analysis purposes,  it is sufficient to have the estimate \eqref{cld_new_eq_3.16}, the measure theoretic result of \cite{pulvirenti_elastic_collision_}, and the following bound:
\begin{align}
    \sum_{i=0}^{\kappa -1}(n(Q_{c,i+1})\cdot \tilde{P}_{i}) \leq C (1+|p|^2),
\end{align}
where $C >0$ does not depend on $q , p$ and $h$. We employ Lyapunov-type arguments to obtain the above bound. 
We relabel the dynamics $(\vartheta_{i},Q_{c, i},\tilde{P}_{i-1})$ evolving on $(0, h] \times \partial G \times \mathbb{R}^d$ corresponding to $A_c$ as
$(\vartheta_{i},X_{i},Y_{i})$  with 
\begin{align}
X_{i} = Q_{c, i}, \quad Y_{i} = \tilde{P}_{i-1}, \quad \quad i = 1,\dots, \kappa , 
\end{align}
with $\kappa$ being defined as (the same as before): let $\mathcal{S} : =   \{(t,x,v) \in (0,h]\times \partial G\times \mathbb{R}^d\; ;\;x + (h-t)(y-2(n(x) \cdot y)n(x)) \in \bar{G}\}$, then 
\begin{align}
    \kappa := \inf\{ k \geq 1 \; : \; (\vartheta_k, X_k, Y_k) \in \mathcal{S} \}.
\end{align} 
We introduce this new notation to simplify the proof which follows and also the result of this subsection is self-contained and is of interest independently of $A_c$ step or collisional Langevin dynamics, and hence it is useful to formulate it in general terms.

Let $(x,y)$ be so that  $x + sy \in  G^c$ for some $s \in (0,h]$.
Consider the discrete-time dynamical system $(\vartheta_{i}, X_i, Y_i)_{i=0}^{\kappa}$   
given by
\begin{align}
\vartheta_{0}=0&, \,\, X_0=x, \,\, Y_0=y, \label{cld_eqn3_23}  \\  
X_{i+1} &= X_{i} + {\tau}_{i+1} Y_{i}, \notag     \\ 
    Y_{i+1} &= Y_{i} - 2(n(X_i) \cdot Y_{i}) n(X_i),  \notag\\  
        \vartheta_{i+1} & = \vartheta_{i} + {\tau}_{i+1},  \quad i= 0,\dots, \kappa-1, \notag 
\end{align}
where ${\tau}_{i+1}$ is the smallest time increment so that $X_{i+1} \in \partial G$. 
Note that  $(\vartheta_{i}, X_i, Y_i)_{i=1}^{\kappa}$ evolves on $(0, h] \times \partial G \times \mathbb{R}^d$. Recall that  the Lebesgue measure of initial configurations  $(x, y) \in G \times \mathbb{R}^d$ is zero for which  the number of collisions $\kappa$ can be infinite as explained above based on \cite{pulvirenti_elastic_collision_}.

    In the newly introduced notation, we aim to bound
\begin{align}
    \sum_{i=1}^{\kappa } (n(X_{i}) \cdot Y_{i})  \leq C  (1+|y|^2),
\end{align}
where, as mentioned, $C>0$ does not depend on $h$ and $ (x,y)$, it does depend on $\bar{G}$.

Consider the Koopman-type operator,  acting on a suitable class of functions,  associated with the discrete dynamical system 
\((\vartheta_i, X_i, V_i)\) (evolving according to \eqref{cld_eqn3_23}):
\begin{equation}
    \mathcal{K}V(\vartheta_k, X_k, Y_k) = V(\vartheta_{k+1}, X_{k+1}, Y_{k+1}).
\end{equation}
Introduce the boundary value problem corresponding to the one-step transition of the dynamical system \eqref{cld_eqn3_23}:
\begin{align}
    \mathcal{K}V(t,x,y) - V(t,x,y) &= - g(t,x,y), \quad (t,x,y) \in [0,h)\times \partial G \times \mathbb{R}^d, \\
    V(t,x,y) &= 0, \quad (t,x,y) \in \mathcal{S} \text{ or } x \in  G, \label{eq:Koop}
\end{align}
where 
$
    \mathcal{K}V(t,x,y) = V(\vartheta_1, X_1, Y_1) 
$, $g \geq  0$, and $g(t,x,y)=0$ for $(t,x,y) \in \mathcal{S}$ and $x \in G$. 
As can be verified (cf. \cite{20, 46,milstein2021stochastic}), the solution of the above boundary value problem is given by 
\begin{align}
    V(0, x, y) = \sum_{i=1}^{\kappa -1 } g(\vartheta_k, X_k, Y_k).
\end{align}
If we take $g(t, x, y) = (n(x)\cdot y)$, 
the solution becomes
\begin{align}
    V(0, x, y) = \sum_{i=1}^{\kappa -1 }(n(X_i) \cdot Y_i). 
\end{align}
If we can find a function $v$ with corresponding $g$ such that for $(t,x,y) \in [0,h)\times \partial G \times \mathbb{R}^d$:
\begin{align}
    g(t,x,y) \geq (n(x) \cdot y) 
\end{align}
then $ V(t,x,y) \leq v(t,x,y) $. The proof of  next proposition rests on finding such a Lyapunov function.

\begin{proposition}\label{prop:mult}
    Let $\partial G \in C^{2,\alpha}$  and $G$ be bounded.  Then
    \begin{align}
        \sum_{i=1}^{\kappa } (n(X_i) \cdot Y_i)  \leq C (1+h)|y|(1+|y|),
    \end{align}
where $C>0$ does not depend on $(x,y)$ and $h$, and it depends on $\bar{G}$. 
\end{proposition}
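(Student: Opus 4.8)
The plan is to bound the weighted collision count by tracking a single $C^{2}$ ``defining function'' of $G$ along the billiard chords. Since $\partial G\in C^{2,\alpha}$ and $\bar G$ is compact, the signed distance to $\partial G$ is $C^{2}$ on a collar of $\partial G$, and a standard cut-off produces $\phi\in C^{2}(\bar G)$ with $\phi=0$ and $\nabla\phi=-n$ on $\partial G$ and $c_{2}:=\sup_{\bar G}\|\nabla^{2}\phi\|<\infty$. I will write $a_{i}:=(n(X_{i})\cdot Y_{i})\ge 0$ for the incoming normal speed at the $i$-th collision; because elastic reflection preserves the speed, $|Y_{i}|=|y|$, and hence $a_{i}\le|y|$ for every $i$.

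The core of the argument is the inter-collision estimate $a_{i}+a_{i+1}\le c_{2}|y|^{2}\tau_{i+1}$ for $1\le i\le\kappa-1$. To prove it I would look at $\psi(s):=\phi(X_{i}+sY_{i+1})$ for $s\in[0,\tau_{i+1}]$, i.e.\ $\phi$ along the straight chord from $X_{i}$ to $X_{i+1}=X_{i}+\tau_{i+1}Y_{i+1}$, where $Y_{i+1}=Y_{i}-2a_{i}n(X_{i})$ is the post-reflection velocity leaving $X_{i}$ and the chord lies in $\bar G$ by the choice of $\tau_{i+1}$ as the first return time to $\partial G$. Using $\nabla\phi=-n$ on $\partial G$, $|n|=1$, and $n(X_{i})\cdot Y_{i+1}=a_{i}-2a_{i}=-a_{i}$, one gets $\psi'(0)=-n(X_{i})\cdot Y_{i+1}=a_{i}$ and $\psi'(\tau_{i+1})=-n(X_{i+1})\cdot Y_{i+1}=-a_{i+1}$, while $|\psi''(s)|=|Y_{i+1}^{\top}\nabla^{2}\phi(\cdot)Y_{i+1}|\le c_{2}|y|^{2}$, so
\[
a_{i}+a_{i+1}=\psi'(0)-\psi'(\tau_{i+1})=-\int_{0}^{\tau_{i+1}}\psi''(s)\,ds\ \le\ c_{2}|y|^{2}\tau_{i+1}.
\]
Summing over $i=1,\dots,\kappa-1$ and using $\tau_{2}+\dots+\tau_{\kappa}\le h$ yields $2\sum_{i=1}^{\kappa}a_{i}-a_{1}-a_{\kappa}=\sum_{i=1}^{\kappa-1}(a_{i}+a_{i+1})\le c_{2}h|y|^{2}$, and since $a_{1},a_{\kappa}\le|y|$,
\[
\sum_{i=1}^{\kappa}(n(X_{i})\cdot Y_{i})\ \le\ |y|+\tfrac12 c_{2}h|y|^{2}\ \le\ C(1+h)|y|(1+|y|),\qquad C=\max(1,c_{2}/2),
\]
the cases $\kappa\le1$ being immediate. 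I would also remark that this same inter-collision estimate is exactly what makes $v(t,x,y):=\|\nabla\phi\|_{\infty}|y|+\tfrac12 c_{2}|y|^{2}(h-t)-\tfrac12(\nabla\phi(x)\cdot y)$ a non-negative supersolution of the boundary value problem for $\mathcal{K}$ with $g(t,x,y)=(n(x)\cdot y)$, so the comparison principle recorded above delivers the bound directly in the language used there.

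I expect two points to need care. First, $\phi$ must be $C^{2}$ on \emph{all} of $\bar G$, not merely on a collar of $\partial G$: a chord joining two successive collision points may traverse the whole domain, so a Hessian bound localised near the boundary would not control $\psi''$ --- compactness of $\bar G$ is what makes $c_{2}$ finite. Second, one must know the billiard and the increments $\tau_{i}$ are well defined with $\kappa<\infty$; this is where the cited measure-theoretic result of \cite{pulvirenti_elastic_collision_} enters: for $(x,y)$ outside a Lebesgue-null set (and away from grazing configurations, which anyway contribute $a_{i}=0$) the dynamics is well posed on $[0,h]$ and the estimate is rigorous, while on the exceptional set the same bound holds for every partial sum $\sum_{i=1}^{N}a_{i}$ with $N\le\kappa$ and hence for the series. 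No delicate curvature or dispersion estimates from billiard theory are required --- the time budget $\sum_i\tau_{i}\le h$ together with boundedness of $\nabla^{2}\phi$ does all the work.
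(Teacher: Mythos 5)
Your proof is correct, and at its core it is the same mechanism as the paper's: a Lyapunov-type quantity whose ``normal-field linear part'' drops by the incoming normal speed at each reflection, with the accumulated error controlled by derivatives of that field times the total chord length $\sum_i\tau_i\le h$. Indeed, your remark identifying $v(t,x,y)=\|\nabla\phi\|_\infty|y|+\tfrac12 c_2|y|^2(h-t)-\tfrac12(\nabla\phi(x)\cdot y)$ as a supersolution of the Koopman boundary value problem is precisely the structure of the function $V(t,x,y)=(A+K(h-t))(|y|^2+|y|)+(B(x)\cdot y)$ used in the paper, where $B$ is a $C^1$ extension of the outward normal. The difference is in execution, and yours is the more elementary route: by taking the linear part to be the gradient of a scalar defining function $\phi$ (essentially the signed distance), the inter-collision step becomes the \emph{exact} identity $a_i+a_{i+1}=\psi'(0)-\psi'(\tau_{i+1})=-\int_0^{\tau_{i+1}}\psi''$, which you then telescope directly with an explicit constant $C=\max(1,c_2/2)$; the paper instead extends the normal field via Gilbarg--Trudinger, Taylor-expands $B(X_{i+1})$ with a Jacobian remainder, and absorbs that remainder by choosing $K>J_{\max}$ in the $K(h-t)$ compensator inside the abstract comparison-principle framework. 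You also handle the two points that genuinely need care --- the global (not merely collar-local) Hessian bound so the estimate survives chords traversing the interior, and the well-posedness/finiteness of $\kappa$ off a Lebesgue-null set via the cited measure-theoretic result --- and you correctly read the reflection recursion as $X_{i+1}=X_i+\tau_{i+1}Y_{i+1}$ (travel with the post-reflection velocity), consistent with Algorithm~\ref{A$_c$ step with multiple}, rather than the literal indexing in \eqref{cld_eqn3_23}. No gap.
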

\begin{proof}
Consider a boundary zone $G_{R_{0}} \subset G$ defined as $G_{R_{0}} := \{ x \in \bar{G}\; ; \; \dist(x,\partial G) < R_{0}\}$. Due to \cite[Appendix]{104},  there is an $R_{0}$ such that the projection of $x$ on $\partial G$ is unique whenever $x \in G_{R_{0}}$. Introduce the function
\begin{equation*}
    V(t,x,y) =  \begin{cases}
    0,& (t, x, y) \in \mathcal{S} \text{ or } x \in G,\\
    (A + K(h-t))(| y|^{2} +|y|) +  (B(x)\cdot y), & (t, x, y) \in (0,h)\times \partial {G}\times \mathbb{R}^{d},
    \end{cases} 
\end{equation*}
where $B(x) \in C^{1}(\bar{G})$ is  an $\mathbb{R}^{d}$ valued function such that  $B(x) : = n_{G}(x^{\pi})$ when $x \in \bar{G}_{R_{0}}$; here $x^{\pi}$ is the projection of $x$ on $\partial G$. Note that $n_{G}(x) \in C^{1}(\partial {G})$ due to Assumption~\ref{cld_as:1} (see \cite[Appendix]{104}). We extend the function $B(x)$ from $G_{R_{0}}$  to $\bar{G}$ so that $B(x) \in C^{1}(\bar{G})$ and $|B(x)|_{G}^{1,\alpha} \leq C |n(x^{\pi})|_{G_{R_{0}}}^{1,\alpha}$, where $|\cdot|^{1,\alpha}$ is the H\"older norm and $C>0$ depends on $G$. Such an extension is possible due to \cite[Lemma 6.37]{104}. 
We choose the constant $A > 0$ so that  $V(t,x,y)$ is always positive. Note that
$B(x)$ is a bounded function with bounded first derivatives, hence it is possible to make such a choice of $A$. The choice of the constant $K > 0$ will be discussed later in the proof.

Given $(\vartheta_{k},X_{k}, Y_{k}) = (t,x,y)$, $k < \kappa -1$, we now calculate $-(\mathcal{K}V(t,x,y) - V(t,x,y))$. 
Using Taylor's theorem, we obtain 
\begin{align}
    B(X_{1}) &=  B(x) +  \tau_1
\textbf{J}_{B}(x+ \epsilon \tau_1 y)y , 
 \end{align}
where $\epsilon \in (0,1) $ and $\mathbf{J}_{B}(y)$ is the Jacobian matrix of $B$ evaluated at $y$. Therefore,
\begin{align}
   (B(X_{1}) \cdot Y_1) = \big(B(X_1) \cdot (y - 2 (n(x) \cdot y)n(x))\big) & = (B(x) \cdot y) -2 (n(x) \cdot y) (B(x) \cdot n(x))   \nonumber   \\  &      + {\tau}_1 \big(\textbf{J}_{B}(x+ \epsilon \tau_1 y)y \cdot Y_1\big).
\end{align}
We also have
\begin{align}
    (A + K(h-\vartheta_1))(|Y_1|^2 +|Y_1|)=  (A + K(h-t-{\tau}_1 ))( |y|^2 +|y|),  
\end{align}
since $|Y_1| = |y|$ (conservation of momentum). 

Hence, using $(B(x) \cdot n(x)) =1$, we obtain
\begin{align}
    (A + K(h-\vartheta_1))(|Y_1|^2 + |Y_1|)+(B(X_1) \cdot Y_1)   &=   (A + K(h-t))(|y|^2+|y|) 
    - K{\tau}_1 (|y|^2+|y|)   \nonumber  \\ 
    &+(B(x) \cdot y) -2 (n(x) \cdot y)  
    +{\tau}_1\big(\textbf{J}_{B}(x+ \epsilon {\tau_1} y)y \cdot Y_1\big).
\end{align}
Due our assumption on $G$ and $\partial G$, $J_{\max}:=$$\max_{y \in \bar{G}}\| \textbf{J}_{B}(y)\|$
is bounded. Therefore, 
\begin{align}
    (A + K(h-\vartheta_1))(|Y_1|^2+|Y_1|)+(B(X_1) \cdot Y_1)   &\leq  (A + K(h-t))(|y|^2+|y|)- K\tau_1 (|y|^2+|y|)
       \nonumber  \\  & +(B(x) \cdot y) -2 (n(x) \cdot y)    +\tau_1 J_{\max}|y|^2.
\end{align}
With an appropriate choice of $K $ (i.e.  $K > J_{\max}$), we have constructed a function $g := - (\mathcal{K}V- V)$ such that $-(\mathcal{K}V- V) \geq (n(x) \cdot v) $, the proposition in proved. 
\end{proof}



\begin{remark}
Let $x \in \partial G$ and $\phi =\phi(x,y):=\arcsin{(n(x) \cdot y)/|y|}$, i.e, the angle between $y$ and the tangent to $\partial G$ at $x$. Assume that the angle is bounded from below, $\phi(x,y) \ge \phi_{\min}$. Then, it follows from Proposition~\ref{prop:mult} that
\[
\kappa \leq C (1+h) \frac{1+|y|}{\sin{\phi_{\min}}},
\]
where $C>0$ does not depend on the velocity $y$, position $x$ and time $h$, it depends on the size of the domain $G$ and curvature of its boundary $\partial G$. 
Note that $G$ can be a non-convex domain with $C^{2,\alpha}$ boundary. From the point of view of dynamical systems (in that case time $h$ is usually considered to be large
in contrast to our numerical analysis work where $h>0$ is sufficiently small), this is an interesting estimate for the number of collisions away from the grazing set.

\end{remark}

\section{Proofs}\label{sec:proofs}

In this section, for the first-order scheme, we prove the finite-time convergence Theorem~\ref{theorem4.1} (Section~\ref{sec:proof1}) and Theorem~\ref{theorem4.2} (Section~\ref{sec:proof2}) on the error estimate for computing ergodic limits. We also provide a justification (Section~\ref{sec:2ndorder}) for second-order weak convergence of the splitting schemes from Section~\ref{sec:2nd}.

\subsection{Proof of Theorem~\ref{theorem4.1}}\label{sec:proof1}

The proof is based on  two main ingredients: a one-step error estimate (Lemma~\ref{lemma4.4}) and an estimate related to the average number of collisions (Lemma~\ref{bl}). 
According to the lemmas, inside the domain, where we use the usual Euler-type walker, the one-step error is $\mathcal{O}(h^2)$ and the walker makes $\mathcal{O}(1/h)$ steps; the error in the collision steps is $\mathcal{O}(h)$ but the average number of collision steps is bounded and independent of the time step $h$. The finite-time error is  naturally $\mathcal{O}(h)$. 
We note that the position $Q(t)$ in the continuous dynamics (\ref{cld}) and its discrete approximation $Q_k$ is bounded as we consider (\ref{cld}) in a bounded domain $\bar G$. At the same time, momentum $P(t)$ lives in $\mathbb{R}^d$ and hence, to prove convergence, we need that moments of its discrete approximation $P_k$ are bounded (see Lemma~\ref{cld_lemma_4.2}). The proof relies on the fact that  the drift in  (\ref{cld}) is growing at most linearly in $P(t)$ and the noise is additive. 

\begin{lemma}\label{cld_lemma_4.2}
Let Assumptions~\ref{cld_as:1}-\ref{as:2} hold, then the moments of the momentum $P_{k}$ constructed according to scheme [PA$_c$] applied to (\ref{cld}) are bounded, i.e. for any $m \geq 1$:
\begin{equation}\label{eq:momP}
    \mathbb{E}(|P_{k}|^{2m}) \leq C(1 + |P_{0}|^{2m}),
\end{equation}
where  $C>0$ is a constant independent of $h$.
\end{lemma}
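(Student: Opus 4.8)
The plan is to establish the moment bound by a discrete Gr\"onwall argument on $\mathbb{E}|P_k|^{2m}$, exploiting two structural features of scheme [PA$_c$]: first, the $A_c$ step is norm-preserving in momentum (the identity \eqref{eq:PhatP}, $|P_{k+1}|^2 = |\hat P_{k+1}|^2$, holds whether or not a collision occurs, and more generally under multiple collisions since each specular reflection is an isometry), so the collision handling contributes nothing to the growth of momentum moments; second, the auxiliary momentum step $\hat P_{k+1} = P_k + h\,b(Q_k,P_k) + h^{1/2}\sigma\,\xi_{k+1}$ has additive noise and a drift that grows at most linearly in $p$ by Assumption~\ref{as:2}. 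Therefore it suffices to control $\mathbb{E}|\hat P_{k+1}|^{2m}$ in terms of $\mathbb{E}|P_k|^{2m}$.

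The key steps, in order, are as follows. First I would fix $m\ge 1$ and write $|\hat P_{k+1}|^2 = |P_k|^2 + 2h\,(P_k\cdot b(Q_k,P_k)) + 2h^{1/2}\sigma\,(P_k\cdot\xi_{k+1}) + |h\,b(Q_k,P_k) + h^{1/2}\sigma\,\xi_{k+1}|^2$. Raising this to the $m$-th power and taking conditional expectation $\mathbb{E}(\,\cdot\mid Q_k,P_k)$, the odd-in-$\xi$ terms vanish because $\mathbb{E}\xi^i = \mathbb{E}(\xi^i)^3 = 0$ and the $\xi^i$ are i.i.d.; the surviving terms are, by the linear-growth bound $|b(q,p)|\le K(1+|p|)$ and boundedness of all moments of $\xi$, of the form $|P_k|^{2m}(1 + Ch)$ plus lower-order terms in $|P_k|$ with coefficients that are powers of $h$ no smaller than $h$ (the leading correction being $2mh\,(P_k\cdot b(Q_k,P_k))|P_k|^{2m-2} \le 2mhK|P_k|^{2m-1}(1+|P_k|) \le Ch(1+|P_k|^{2m})$). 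Using Young's inequality to absorb the lower-order terms, this yields the one-step recursion
\begin{equation}\label{eq:onesteprec}
\mathbb{E}\big(|\hat P_{k+1}|^{2m}\mid Q_k,P_k\big) \le (1+Ch)\,|P_k|^{2m} + Ch.
\end{equation}
Next, invoking the norm-preservation identity \eqref{eq:PhatP} (valid also under Proposition~\ref{prop:mult}'s multiple-collision regime), $|P_{k+1}|^{2m} = |\hat P_{k+1}|^{2m}$ pointwise, so \eqref{eq:onesteprec} holds verbatim with $P_{k+1}$ in place of $\hat P_{k+1}$. Taking full expectations and iterating the recursion over $k=0,\dots,N-1$ with $Nh=T$ gives $\mathbb{E}|P_k|^{2m} \le e^{CT}(1+|P_0|^{2m}) + (e^{CT}-1)$, which is of the claimed form $C(1+|P_0|^{2m})$ with $C$ depending on $T$, $K$, $\sigma$, $m$ and the moments of $\xi$, but not on $h$.

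The main obstacle, modest but worth handling carefully, is the combinatorial bookkeeping in expanding $|\hat P_{k+1}|^{2m}$ and verifying that every term beyond the leading $(1+Ch)|P_k|^{2m}$ is genuinely $O(h)$ with the right power of $|P_k|$ so that Young's inequality closes the estimate uniformly in $h$; this uses crucially that the noise scales as $h^{1/2}$ (so $h^{1/2}(P_k\cdot\xi)$ contributes only through its square, which is $O(h)$) and that the drift is multiplied by $h$, not $h^{1/2}$. A secondary technical point is that the collision step must be confirmed to be momentum-norm-preserving even when Algorithm~\ref{A$_c$ step with multiple} triggers several reflections in one step — but each individual reflection $\tilde P_{i} = \tilde P_{i-1} - 2(n(Q_{c,i})\cdot\tilde P_{i-1})n(Q_{c,i})$ is an orthogonal reflection and hence an isometry, so the composition preserves $|\cdot|$ regardless of $\kappa$, and the bound is unaffected.
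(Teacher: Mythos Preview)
Your proposal is correct and follows essentially the same route as the paper's proof: both rely on the norm preservation $|P_{k+1}|=|\hat P_{k+1}|$ from \eqref{eq:PhatP}, expand $|\hat P_{k+1}|^{2m}$ using the linear growth of $b$ and the moment conditions on $\xi$ to obtain the one-step recursion $\mathbb{E}|P_{k+1}|^{2m}\le (1+Ch)\mathbb{E}|P_k|^{2m}+Ch$, and close with a discrete Gr\"onwall argument. The paper organizes the expansion as $|(\hat P_{k+1}-P_k)+P_k|^{2m}$ and bounds the increments $\mathbb{E}[|\hat P_{k+1}-P_k|^l\mid P_k]\le Kh^{l/2}(1+|P_k|^l)$ directly, whereas you first expand $|\hat P_{k+1}|^2$ and then raise to the $m$-th power; both are equivalent bookkeeping for the same estimate.
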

\begin{proof}
Using (\ref{eq:PhatP}), we get
\begin{align*}
 &\mathbb{E}|P_{k+1}|^{2m}=\mathbb{E}|\hat P_{k+1}|^{2m}= \mathbb{E}|(\hat P_{k+1}-P_k)+P_k|^{2m} \\
& \leq \mathbb{E}|P_{k}|^{2m}+m\mathbb{E}|P_{k}|^{2m-2}[2(P_k \cdot(\hat P_{k+1}-P_k))+(2m-1)|\hat P_{k+1}-P_k|^2] \notag \\
&+ K \sum_{l=3}^{2m}\mathbb{E} |P_{k}|^{2m-l}  |\hat P_{k+1}-P_k|^l, \notag
\end{align*}
where $K>0$ depends on $m$ only. Thanks to (\ref{linegb}), it is not difficult to obtain  
\begin{equation*}
|\mathbb{E}[(\hat P_{k+1}-P_k)|P_k]| \leq Kh|P_k|, \,\,\, 
\mathbb{E}[|\hat P_{k+1}-P_k|^l|P_k] \leq Kh^{l/2} (1+|P_k|^l), \; l \ge 2,
\end{equation*}
with $K>0$ dependent on $m$, $b$ and the moments of $\xi_k$. 
We observe that
\begin{align}
 \mathbb{E}|P_{k+1}|^{2m} \leq \mathbb{E}|P_{k}|^{2m}+Kh \mathbb{E}|P_{k}|^{2m} +Kh \label{eq:momrecu}
\end{align}
and, using the Gr\"onwall lemma, we get (\ref{eq:momP}).
\end{proof}

Next we estimate the one-step error in the case when  collision with the boundary occurs.  
We recall that for fixed $(q,p)$ the output ${\cal Q},{\cal P}$ of A$_c$ step is deterministic as well as the number of multiple collisions $\kappa$ and collision times $\tau_i$, $i=1,\ldots, \kappa$.

\begin{lemma}\label{bouncary_collision_error_lemma}
Under Assumptions~\ref{cld_as:1}-\ref{cld_as:4a}, the  error incurred in A$_c$ step (see Algorithm~\ref{A$_c$ step with multiple}), given the current state of Markov chain is $(q,p)$,  is bounded by
\begin{align*}
    & \big|u(t_{1},{\cal Q},{\cal P}) - u(t_{1},q, p)\big|I_{\bar{G}^c}(\hat{Q}_1) \\
   & \leq C h \sum_{i = 1}^{\kappa}(n_{G}(Q_{c,i})\cdot \tilde{P}_{i-1})(1+|p|^{2m})I_{G^{c}}(\hat{Q}_{1}) +  C h^2 (1 + |p|^{2m}),
\end{align*}
where $\kappa := \kappa (q, p ,h)$ denotes the number  of multiple collisions in one step of duration $h$,   $u(t,q,p)$ is the solution of (\ref{eq2.1})-(\ref{eq2.3}), and $C>0$ is a constant independent of $h$ and $p$.
\end{lemma}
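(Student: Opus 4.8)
The plan is to expand $u(t_1,{\cal Q},{\cal P})-u(t_1,q,p)$ as a telescoping sum along the broken-line billiard trajectory produced by the A$_c$ step. With the notation of Section~\ref{sec:mult}, insert between $(q,p)$ and $({\cal Q},{\cal P})$ the intermediate states $(Q_{c,i},\tilde P_{i-1})$ and $(Q_{c,i},\tilde P_i)$, $i=1,\dots,\kappa$, under the conventions $Q_{c,0}:=q$, $\tilde P_0:=p$, $\tilde P_\kappa:={\cal P}$. This splits the increment into $\kappa$ \emph{reflection} differences $u(t_1,Q_{c,i},\tilde P_i)-u(t_1,Q_{c,i},\tilde P_{i-1})$ and the $\kappa+1$ \emph{free-flight} differences $u(t_1,Q_{c,i},\tilde P_{i-1})-u(t_1,Q_{c,i-1},\tilde P_{i-1})$, $i=1,\dots,\kappa$, together with $u(t_1,{\cal Q},{\cal P})-u(t_1,Q_{c,\kappa},{\cal P})$. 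Each reflection difference vanishes identically: $Q_{c,i}\in\partial G$, $(n_G(Q_{c,i})\cdot\tilde P_{i-1})>0$ by construction of A$_c$, and $\tilde P_i=\tilde P_{i-1}-2(n_G(Q_{c,i})\cdot\tilde P_{i-1})n_G(Q_{c,i})$, so the specular boundary condition \eqref{eq2.3} applies verbatim. Almost-sure finiteness of $\kappa$, hence of these sums, follows from the measure-theoretic result of \cite{pulvirenti_elastic_collision_}, and $\sum_i(n_G(Q_{c,i})\cdot\tilde P_{i-1})\le C(1+|p|^{2})$ by Proposition~\ref{prop:mult}.

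For each free-flight difference I would Taylor-expand $u$ in the position variable along the segment, of length $\tau_i$ (resp.\ $h-\vartheta_\kappa$), traversed at the momentum $\tilde P_{i-1}$ (resp.\ ${\cal P}$); note $|\tilde P_{i-1}|=|{\cal P}|=|p|$ by conservation of momentum \eqref{eq:PhatP}. The second-order remainders are dominated, via $\sum_{|j|=2}|D^{j}_q u|\le C(1+|p|^{2m})$ of Assumption~\ref{cld_as:4a}, by $C\tau_i^2|p|^{2}(1+|p|^{2m})$; summing and using $\sum_i\tau_i\le h$, hence $\sum_i\tau_i^2\le h^2$, together with $(h-\vartheta_\kappa)^2\le h^2$, yields an aggregate $O(h^2(1+|p|^{2m}))$ term. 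The remainder is the linear part $L:=\sum_{i=1}^{\kappa}\tau_i(\tilde P_{i-1}\cdot\nabla_q u(t_1,Q_{c,i},\tilde P_{i-1}))+(h-\vartheta_\kappa)({\cal P}\cdot\nabla_q u(t_1,Q_{c,\kappa},{\cal P}))$.

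The heart of the proof is the re-summation of $L$. I would first use the backward Kolmogorov equation \eqref{eq2.1} to write $(v\cdot\nabla_q u)=-\partial_t u-\tfrac{\sigma^2}{2}\Delta_p u-(b\cdot\nabla_p u)$ at the boundary point $Q_{c,i}$, which is legitimate after passing to the limit from the interior, where Assumption~\ref{cld_as:4a} and the boundary regularity of \cite{PDE25} supply the needed continuity and $(1+|p|^{2m})$ growth of these derivatives. Differentiating the specular identity $u(t,x,v)=u(t,x,\mathcal S_x v)$, with $\mathcal S_x:=I-2\,n_G(x)n_G(x)^{\top}$, then shows that $\partial_t u$ and $\Delta_p u$ are invariant under $v\mapsto\mathcal S_x v$ while $\nabla_p u(t,x,\mathcal S_x v)=\mathcal S_x\nabla_p u(t,x,v)$; in particular, for a tangential momentum ($n_G(x)\cdot v=0$) the vector $\nabla_p u(t,x,v)$ is fixed by $\mathcal S_x$, hence tangential, so expanding in the normal component of $v$ gives the crucial bound $|n_G(x)\cdot\nabla_p u(t,x,v)|\le C\,|n_G(x)\cdot v|\,(1+|v|^{2m})$. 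Combining this with the linear growth of $b$ (Assumption~\ref{as:2}) to absorb the $b\cdot\nabla_p u$ factor, and unwinding the reflections that separate each $\tilde P_{i-1}$ from $p$ one at a time --- each unwinding costing $O((n_G(Q_{c,j})\cdot\tilde P_{j-1})(1+|p|^{2m}))$ --- while also sliding the position arguments back to $q$ (a further $O(h^2(1+|p|^{2m}))$ perturbation, estimated along the broken path), $L$ reorganizes into $h(p\cdot\nabla_q u(t_1,q,p))$, the drift term matched against the PDE-consistency term in the surrounding one-step estimate Lemma~\ref{lemma4.4}, plus a residual bounded by $Ch\sum_{i=1}^{\kappa}(n_G(Q_{c,i})\cdot\tilde P_{i-1})(1+|p|^{2m})$. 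Multiplying through by the collision indicator $I_{G^c}(\hat Q_1)$ gives the stated bound.

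I expect the principal obstacle to be the boundary regularity bookkeeping: the argument differentiates the specular condition and invokes the PDE at the collision points $Q_{c,i}\in\partial G$, so one must justify that $\partial_t u$, $\nabla_p u$, $\Delta_p u$ extend continuously up to $\partial G$ (away from the grazing set) with their polynomial-in-$p$ bounds intact --- precisely where the results of \cite{PDE25} and the structure of Assumption~\ref{cld_as:4a} enter --- all while keeping the constants uniform in the a priori unknown number of collisions $\kappa$, so that the Lyapunov estimate of Proposition~\ref{prop:mult} can close the argument.
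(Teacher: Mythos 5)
Your proposal is correct in its overall architecture and reaches the same conclusion, but the mechanism you use for the key cancellation is genuinely different from the paper's. Both arguments telescope $u(t_1,\mathcal{Q},\mathcal{P})-u(t_1,q,p)$ along the broken billiard path, annihilate the reflection jumps with the specular condition \eqref{eq2.3}, Taylor-expand the free flights to isolate a transport term plus $\mathcal{O}(h^2(1+|p|^{2m}))$ remainders, and invoke Proposition~\ref{prop:mult} to control the resulting collision sum; both also end up (correctly) retaining the drift term $h\,(p\cdot\nabla_q u(t_1,q,p))$, which is exactly how the lemma is consumed in Lemma~\ref{lemma4.4} even though the lemma's literal statement omits it. The difference is in how the reflection defect of the transport term is shown to carry a factor $(n_G(Q_{c,i})\cdot\tilde P_{i-1})$. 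The paper works directly and algebraically with $\nabla_q u$: substituting $\tilde P_i=\tilde P_{i-1}-2(n\cdot\tilde P_{i-1})n$ produces one correction from the momentum argument of $\nabla_q u$ (estimated via the mixed Jacobian $\mathbf{J}_{\nabla_q u}$ with respect to $p$) and one from the dot product with the reflected vector (estimated via $|\nabla_q u|\leq C(1+|p|^{2m})$), each already multiplied by the explicit factor $(n\cdot\tilde P_{i-1})$. You instead substitute the backward Kolmogorov equation \eqref{eq2.1} for $(v\cdot\nabla_q u)$ at the collision points and use exact invariance of $\partial_t u$ and $\Delta_p u$ under the reflection (which does hold: $\operatorname{tr}(\mathcal S H\mathcal S)=\operatorname{tr}H$ since $\mathcal S^2=I$), reducing the defect to the $(b\cdot\nabla_p u)$ term, whose dangerous normal piece you kill with the differentiated specular identity $|n\cdot\nabla_p u|\leq C|n\cdot v|(1+|v|^{2m})$ --- a bound the paper never needs. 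Your route is less elementary (it requires the PDE and the $p$-derivatives of $u$ to extend to $\partial G$, which you rightly flag as the main regularity burden, though the paper's own Taylor expansions at the points $Q_{c,i}\in\partial G$ face essentially the same issue) and brings in the linear growth and $p$-Lipschitz control of $b$; in exchange it avoids the mixed $q$--$p$ derivatives $\mathbf{J}_{\nabla_q u}$, which Assumption~\ref{cld_as:4a} does not literally bound, and it exposes the structurally interesting fact that $(n\cdot\nabla_p u)$ vanishes to first order in the normal momentum at the boundary. The bookkeeping you describe --- each reflection defect $\delta_j$ acquiring the coefficient $\sum_{i>j}\tau_i+(h-\vartheta_\kappa)=h-\vartheta_j\leq h$ --- closes correctly and reproduces the paper's $(h-\vartheta_{i+1})$ weights.
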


\begin{proof}
In the error expansions below in this proof, we consider the scenario in which the chain undergoes boundary collision, characterized by the condition $I_{\bar{G}^c}(\hat{Q}_1) = 1$.  
Application of Taylor's theorem gives
    \begin{align*}
        u(t_1,{\cal Q},{\cal P}) = u(t_1, \hat{Q}_{\kappa}, \tilde{P}_\kappa) = u(t_1, Q_{c,\kappa},  \tilde{P}_{\kappa}) + (h - \vartheta_{\kappa}) (\nabla_q u(t_1, Q_{c, \kappa}, \tilde{P}_{\kappa}) \cdot \tilde{P}_\kappa) + r_{\kappa},
    \end{align*}
where 
\begin{align}
    r_{\kappa} =  \frac{1}{2}  (h - \vartheta_{\kappa})^2\tilde{P}_{\kappa}^{\top} \nabla^2 u(t_1, Q_{c, \kappa} + \alpha \tilde{P}_{\kappa}, \tilde{P}_{\kappa}) \tilde{P}_{\kappa}, \quad \text{with}\quad \alpha \in (0,1),
\end{align}
and, therefore, thanks to Assumption~\ref{as:7}, $|r_{\kappa}| \leq C (h -\vartheta_{\kappa})^2 |p|^{2m}$ with $C>0$ being independent of $h$ and $p$.     
From \eqref{cld_new_eqn_3.11}, we have
\begin{align}\label{eq:Pbcagain}
    \tilde{P}_{\kappa} = \tilde{P}_{\kappa-1} -2 (n(Q_{c, \kappa})\cdot \tilde{P}_{\kappa-1})n(Q_{c, \kappa}).
\end{align}
Hence, using the boundary condition $u(t_1, Q_{c,\kappa},  \tilde{P}_{\kappa}) = u(t_1, Q_{c,\kappa},  \tilde{P}_{\kappa-1})$, we get
 \begin{align}
        u(t_1, {\cal Q}, {\cal P}) &= u(t_1, Q_{c,\kappa},  \tilde{P}_{\kappa}) + (h - \vartheta_{\kappa}) 
        (\nabla_q u(t_1, Q_{c, \kappa}, \tilde{P}_{\kappa}) \cdot \tilde{P}_{\kappa}) + r_{\kappa} \nonumber \\   
       &= u(t_1, Q_{c,\kappa},  \tilde{P}_{\kappa-1}) + (h - \vartheta_{\kappa}) (\nabla_q u(t_1, Q_{c, \kappa}, \tilde{P}_{\kappa}) \cdot \tilde{P}_{\kappa}) + r_{\kappa}. 
    \end{align}
We shift our focus to $u(t_1, Q_{c,\kappa},  \tilde{P}_{\kappa-1})$ and again use Taylor's theorem to obtain
\begin{align}
u(t_1, Q_{c,\kappa},  \tilde{P}_{\kappa-1}) = u(t_1, Q_{c, \kappa-1}, \tilde{P}_{\kappa-1})  + \tau_{\kappa}(\nabla_q u(t_1, Q_{c,\kappa-1}, \tilde{P}_{\kappa-1})\cdot \tilde{P}_{\kappa-1}) + \bar r_{\kappa-1},
\end{align}
where $|\bar r_{\kappa-1}| \leq C \tau_{\kappa}^{2} |p|^{2m}$ with $C>0$ being independent of $h$ and $p$.  

Using (\ref{cld_new_eq_3.16}) and \eqref{eq:Pbcagain} along with Taylor's theorem, we have 
\begin{align}
    &(\nabla_q u(t_1, Q_{c,\kappa-1}, \tilde{P}_{\kappa-1})\cdot \tilde{P}_{\kappa-1}) = (\nabla_q u(t_1, q, \tilde{P}_{\kappa-1}) \cdot \tilde{P}_{\kappa-1}) + \hat{r}_{\kappa-1} \label{cld_new_eq_4.6} \\ 
&\hspace{-45pt}\text{and} \nonumber \\   
 &   (\nabla_q u(t_1, Q_{c,\kappa}, \tilde{P}_{\kappa})\cdot \tilde{P}_{\kappa}) = (\nabla_q u(t_1, q, \tilde{P}_{\kappa})\cdot \tilde{P}_{\kappa}) +  \tilde{r}_{\kappa-1}  
    = (\nabla_q u(t_1, q, \tilde{P}_{\kappa})\cdot \tilde{P}_{\kappa-1})  +  \tilde{r}_{\kappa-1}  \nonumber   \\  
  &  \quad -2 (\nabla_q u(t_1, q, \tilde{P}_{\kappa}) \cdot n(Q_{c,\kappa}))( \tilde{P}_{\kappa-1}\cdot n(Q_{c,\kappa})) \nonumber  \\  
    & = (\nabla_q u(t_1, q, \tilde{P}_{\kappa-1})\cdot \tilde{P}_{\kappa-1})  +  \tilde{r}_{\kappa-1} 
      - 2(n(Q_{c,\kappa})\cdot \tilde{P}_{\kappa-1})(\textbf{J}_{\nabla_q u}(\tilde{P}_{\kappa-1, \theta}) n(Q_{c,\kappa})
      \cdot \tilde{P}_{\kappa-1})
  \nonumber   \\  
  &  \quad -2 (\nabla_q u(t_1, q, \tilde{P}_{\kappa}) \cdot n(Q_{c,\kappa}))( \tilde{P}_{\kappa-1}\cdot n(Q_{c,\kappa})) , \label{cld_new_eq_4.7}
\end{align}
where $|\hat{r}_{\kappa-1}| + |\tilde{r}_{\kappa-1}|  \leq C h |p|^{2m}$ and $\textbf{J}_{\nabla_q u}(\tilde{P}_{\kappa-1, \theta}) $ is the Jacobian of $\nabla_q u$ with respect to $p$ evaluated at $(t_1, q, \tilde{P}_{\kappa-1, \theta})$ with
\begin{align}
    \tilde{P}_{\kappa-1, \theta} = \tilde{P}_{\kappa-1} - 2 \theta (n(Q_{c,\kappa})\cdot \tilde{P}_{\kappa-1})n(Q_{c,\kappa}), \quad \theta \in (0,1).  
\end{align}

We combine the terms from \eqref{cld_new_eq_4.6} and \eqref{cld_new_eq_4.7} to yield 
\begin{align}
\tau_{\kappa}&(\nabla_q u(t_1, Q_{c, \kappa-1}, \tilde{P}_{\kappa-1})\cdot \tilde{P}_{\kappa -1})  
+ (h-\vartheta_{\kappa})(\nabla_q u(t_1, Q_{c,\kappa}, \tilde{P}_{\kappa}) \cdot \tilde{P}_{\kappa})    \nonumber 
\\  
& = (h- \vartheta_{\kappa-1})(\nabla_q u(t_1, q, \tilde{P}_{\kappa-1}) \cdot \tilde{P}_{\kappa-1}) + \tau_\kappa \hat{r}_{\kappa-1} +  (h-\vartheta_{\kappa})\tilde{r}_{\kappa-1} \nonumber \\ 
& \quad -  2( h - \vartheta_{\kappa})(n(Q_{c,\kappa})\cdot \tilde{P}_{\kappa-1})(\textbf{J}_{\nabla_q u}(\tilde{P}_{\kappa-1, \theta}) n(Q_{c,\kappa}) \cdot \tilde{P}_{\kappa-1}) \nonumber \\  
& \quad -2(h - \vartheta_{\kappa}) (\nabla_q u(t_1, q, \tilde{P}_{\kappa}) \cdot n(Q_{c,\kappa}))( \tilde{P}_{\kappa-1}\cdot n(Q_{c,\kappa})),
\end{align}
since $h- \vartheta_{\kappa}  + \tau_{\kappa} = h- \vartheta_{\kappa -1}$. 

As a result of the above computations, we arrive at
\begin{align}
    u(t_1, {\cal Q}, {\cal P}) &= u(t_1, Q_{c,\kappa-1}, \tilde{P}_{\kappa-1}) + (h- \vartheta_{\kappa-1})(\nabla_q u(t_1, q, \tilde{P}_{\kappa-1}) \cdot \tilde{P}_{\kappa-1}) 
    \nonumber \\  
    &  \quad + r_{\kappa} + \bar r_{\kappa-1} + \tau_{\kappa} \hat{r}_{\kappa-1} +  (h-\vartheta_{\kappa})\tilde{r}_{\kappa-1} \nonumber 
\\ 
& \quad  -2(h - \vartheta_{\kappa}) (\nabla_q u(t_1, q, \tilde{P}_{\kappa}) \cdot n(Q_{c,\kappa}))( \tilde{P}_{\kappa-1}
\cdot n(Q_{c,\kappa}))  \nonumber \\ 
& \quad 
-  2( h - \vartheta_{\kappa})(n(Q_{c,\kappa})\cdot \tilde{P}_{\kappa-1})(\textbf{J}_{\nabla_q u}(\tilde{P}_{\kappa-1, \theta}) n(Q_{c,\kappa}) \cdot \tilde{P}_{\kappa-1} ).
\end{align}
The repeated application of the boundary condition $u(t_1, Q_{c,i}, \tilde{P}_{i}) = u(t_1, Q_{c,i}, \tilde{P}_{i-1})$, $i=1,\dots, \kappa-1$, and Taylor's theorem, in the similar manner as above, yields 
\begin{align}
    & u(t_1, {\cal Q}, {\cal P})  = u(t_1, Q_{c, 1}, \tilde{P}_1) + (h-\vartheta_1)(\nabla_q u(t_1, q, \tilde{P}_{1})\cdot \tilde{P}_1)  + r_{\kappa} + \sum_{i=1}^{\kappa-1}(\bar r_i + \tau_{i +1} \hat{r}_{i}+(h-\vartheta_{i+1})\tilde{r}_{i})    \nonumber  \\
    & \quad -2 \sum_{i=1}^{\kappa-1}(h-\vartheta_{i+1})(\nabla_q u(t_1, q, \tilde{P}_{i+1})\cdot n(Q_{c, i+1}))(n(Q_{c, i+1})\cdot \tilde{P}_{i})
    \nonumber \\   
    & \quad -2 \sum_{i=1}^{\kappa-1}(h- \vartheta_{i+1}) (\textbf{J}_{\nabla_q u}(\tilde{P}_{i, \theta})  n(Q_{c, i+1}) \cdot \tilde{P}_i)( \tilde{P}_{i} \cdot n(Q_{c, i+1})). \label{cld_neweq_4.11}
\end{align}
One more application of Taylor's theorem gives
\begin{align*}
    u(t_1 , Q_{c,1}, \tilde{P}_1)  &=  u(t_1, q, \tilde{P}_1) + \tau_1 (\nabla_q u(t_1, q, \tilde{P}_1) \cdot \tilde{P}_1 ) + r_0 .
\end{align*}
Therefore,
\begin{align}
    \tau_1 (\nabla_q u(t_1, q, \tilde{P}_1) &+ ( h-\vartheta_1) \nabla_q u(t_1, q, \tilde{P}_1)  = h(\nabla_q u(t_1, q, \tilde{P}_1)\cdot \tilde{P}_1) \nonumber \\ 
   &  = h(\nabla_q u(t_1, q, p) \cdot p)   - 2(n(Q_{c,1})\cdot p)(\textbf{J}_{\nabla_q u}(\tilde{P}_{0, \theta}) n(Q_{c,1}) \cdot \tilde{P}_{0})
  \nonumber   \\  &  \quad -2 (\nabla_q u(t_1, q, \tilde{P}_{1}) \cdot n(Q_{c,1}))( p\cdot n(Q_{c,1})) \label{cld_neweq_4.12} 
\end{align}
since $\vartheta_1 = \tau_1$. 

Note that (see Assumption~\ref{cld_as:4a} and (\ref{eq:PhatP})):   
\begin{align*}
|n_{G}(Q_{c, i+1})\cdot \nabla_{q}u(t_{1}, q, \tilde{P}_{1})| &\leq C (1 + |\tilde{P }_{1}|^{2m}), \\
|\tilde{P}_{i,\theta}| \leq C|\mathcal{P}| , \\ 
|(\tilde{P}_{i} \cdot \textbf{J}_{\nabla_{q} u}(\tilde{P}_{i,\theta})n_{G}(Q_{c, i+1}))| \leq C(1 + |\mathcal{P}|^{2m}).
\end{align*}
Using \eqref{cld_neweq_4.12} in \eqref{cld_neweq_4.11}, we obtain 
\begin{align}
    |u(t_1, {\cal Q}, {\cal P}) - u(t_1, q, p)|  \leq  Ch (1 + |\mathcal{P}|^{2m})\sum_{i=1}^{\kappa (q,p,h)} (n(Q_{c, i}) \cdot \tilde{P}_{i-1}) + C h^2 (1 + |p|^{2m}). 
\end{align}
\end{proof}

The next lemma gives an estimate for the weak one-step error of [PA$_c$] scheme.

\begin{lemma}\label{lemma4.4}
Under Assumptions~\ref{cld_as:1}-\ref{cld_as:4a}, the one-step error  of scheme [PA$_c$] is estimated as
\begin{align*}
    & \big|\mathbb{E}[u(t_{k+1},Q_{k+1},P_{k+1}) - u(t_{k},Q_{k},P_{k})\mid Q_{k},P_{k}]\big| \\
   & \leq C\big(h^{2}(1+|P_{k}|^{2m}) + h\mathbb{E}\big[\sum_{i=1}^{\kappa}(n_{G}(Q_{k,c,i})\cdot \tilde{P}_{k+1,i-1})(1+|P_{k+1}|^{2m})I_{G^{c}}(\hat{Q}_{k+1})\big|Q_{k},P_{k}\big]\big) ,
\end{align*}
where $\tilde{P}_{k+1, 0} = \hat{P}_{k+1}$, $\kappa := \kappa(Q_k, \hat{P}_{k+1}, h)$,  $u(t,q,p)$ is the solution of (\ref{eq2.1})-(\ref{eq2.3}), and $C>0$ is a constant independent of $h$ and $P_{k}$.
\end{lemma}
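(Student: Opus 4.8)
The plan is to compare one step of [PA$_c$] against one step of the \emph{unreflected} Euler scheme and to absorb the discrepancy with the help of Lemma~\ref{bouncary_collision_error_lemma}. Fix the current state $(Q_k,P_k)$, write $\hat P_{k+1}=P_k+\delta(Q_k,P_k;h,\xi_{k+1})$ for the auxiliary momentum and $\bar Q_{k+1}=Q_k+h\hat P_{k+1}$ for the position the chain would reach if the boundary were absent, and let $\tilde u$ be an extension of $u$ to $[0,T)\times\mathbb{R}^d\times\mathbb{R}^d$ that agrees with $u$ on $[0,T)\times\bar G\times\mathbb{R}^d$, is $C^{2,2,4}$, and inherits the polynomial-in-$p$ derivative bounds of Assumptions~\ref{cld_as:4a} and~\ref{as:7} (such an extension exists since $\partial G\in C^4$). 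I would then split
\begin{align*}
u(t_{k+1},Q_{k+1},P_{k+1})-u(t_k,Q_k,P_k)
&=\big[\tilde u(t_{k+1},\bar Q_{k+1},\hat P_{k+1})-u(t_k,Q_k,P_k)\big]\\
&\quad+\big[u(t_{k+1},Q_{k+1},P_{k+1})-\tilde u(t_{k+1},\bar Q_{k+1},\hat P_{k+1})\big]\,I_{G^c}(\hat Q_{k+1}),
\end{align*}
where the second bracket vanishes identically on $\{\hat Q_{k+1}\in G\}$, because there $Q_{k+1}=\bar Q_{k+1}\in G$, $P_{k+1}=\hat P_{k+1}$ and $\tilde u=u$ on $\bar G$.

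For the first (Euler) bracket I would run the classical weak first-order analysis: Taylor-expand $\tilde u(t_{k+1},\bar Q_{k+1},\hat P_{k+1})$ about $(t_k,Q_k,P_k)$ to second order in $(t,q)$ and fourth order in $p$, take the conditional expectation over $\xi_{k+1}$, and use $\mathbb{E}\xi^i=\mathbb{E}(\xi^i)^3=0$, $\mathbb{E}(\xi^i)^2=1$ to annihilate the odd-moment contributions. The $O(h)$ part collapses to $h\big(\partial_t u+p\cdot\nabla_q u+b\cdot\nabla_p u+\tfrac{\sigma^2}{2}\Delta_p u\big)(t_k,Q_k,P_k)$, which vanishes by the backward Kolmogorov equation~(\ref{eq2.1}); the remainder is $O\!\big(h^2(1+|P_k|^{2m})\big)$ in view of the derivative bounds of Assumption~\ref{cld_as:4a} and the moment estimate $\mathbb{E}\big[|\hat P_{k+1}-P_k|^{\ell}\mid Q_k,P_k\big]\le Ch^{\ell/2}(1+|P_k|^{\ell})$, which follows from the linear growth of $b$ and the boundedness of all moments of $\xi_{k+1}$.

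For the second (collision) bracket I would work on $\{\hat Q_{k+1}\in G^c\}$ and invoke Lemma~\ref{bouncary_collision_error_lemma} with input $(q,p)=(Q_k,\hat P_{k+1})$ and terminal time $t_{k+1}$; the multiple-collision data $(Q_{c,i},\tilde P_{i-1})$ of that lemma then becomes $(Q_{k,c,i},\tilde P_{k+1,i-1})$ with $\tilde P_{k+1,0}=\hat P_{k+1}$ and $\kappa=\kappa(Q_k,\hat P_{k+1},h)$. The lemma bounds the change of $u$ produced by the A$_c$ step \emph{relative to the unreflected free-flight update} $\tilde u(t_{k+1},\bar Q_{k+1},\hat P_{k+1})$ --- the $O(h)$ free-flight contribution $h\,\nabla_q u\cdot\hat P_{k+1}$ being precisely what the Euler bracket already accounted for --- by $Ch\sum_{i=1}^{\kappa}(n_G(Q_{k,c,i})\cdot\tilde P_{k+1,i-1})(1+|\hat P_{k+1}|^{2m})+Ch^2(1+|\hat P_{k+1}|^{2m})$. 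Using $|P_{k+1}|=|\hat P_{k+1}|$ from the momentum conservation~(\ref{eq:PhatP}), taking conditional expectations, and bounding $\mathbb{E}\big[h^2(1+|\hat P_{k+1}|^{2m})\mid Q_k,P_k\big]\le Ch^2(1+|P_k|^{2m})$ then assembles the two brackets into the asserted estimate.

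The Euler bracket is routine; the work is in the collision bracket. The main obstacle is reconciling the free-flight reference $\tilde u(t_{k+1},\bar Q_{k+1},\hat P_{k+1})$ with the comparison state inside Lemma~\ref{bouncary_collision_error_lemma}, so that the leading $O(h)$ free-flight term cancels and only the $O(h)$ quantity weighted by $\sum_i(n_G(Q_{k,c,i})\cdot\tilde P_{k+1,i-1})$ survives; this is exactly where the specular boundary condition~(\ref{eq2.3}) enters, since it renders $u$ invariant under momentum reflection at $\partial G$ and hence each collision alters $u$ only by $O\!\big(h\,(n_G\cdot\tilde P)\big)$ rather than by an $O(1)$ amount. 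Secondary points are the propagation of the polynomial weights $(1+|\hat P_{k+1}|^{2m})$ through the expectation --- handled via Assumptions~\ref{cld_as:4a}--\ref{as:7} and the moment bounds above, in the spirit of Lemma~\ref{cld_lemma_4.2} --- and the control of multiple collisions within one step, for which one uses the displacement identity~(\ref{cld_new_eq_3.16}) together with the Lyapunov bound of Proposition~\ref{prop:mult}.
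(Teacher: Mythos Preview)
Your approach is correct and reaches the same estimate, but it is organised differently from the paper. The paper never introduces an extension $\tilde u$ outside $\bar G$; instead it takes $(t_{k+1},Q_k,\hat P_{k+1})$ --- which always lies in $\bar G$ --- as the reference point and splits
\[
u(t_{k+1},Q_{k+1},P_{k+1})-u(t_k,Q_k,P_k)=A_1+A_2+A_3,
\]
where $A_1$ and $A_2$ are the $q$-displacement terms on $\{\hat Q_{k+1}\in G\}$ and $\{\hat Q_{k+1}\in G^c\}$ respectively, and $A_3=u(t_{k+1},Q_k,\hat P_{k+1})-u(t_k,Q_k,P_k)$ is the $(t,p)$-Euler part. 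Taylor-expanding $A_1$ and applying (the expansion in the proof of) Lemma~\ref{bouncary_collision_error_lemma} to $A_2$ both produce the same free-flight term $h\,\hat P_{k+1}\cdot\nabla_q u(t_{k+1},Q_k,\hat P_{k+1})$, which is then re-expanded back to $(t_k,P_k)$ and combined with $A_3$ so that the backward Kolmogorov equation annihilates the $O(h)$ part. Your route merges $A_1$, $A_3$ and the free-flight piece of $A_2$ into a single Euler bracket at the cost of working with the extension $\tilde u$ on the event $\{\bar Q_{k+1}\notin G\}$. That buys a cleaner one-shot cancellation via the PDE, whereas the paper's decomposition avoids extending $u$ beyond $\bar G$ altogether (every Taylor remainder is evaluated on the segment from $Q_k$ to $Q_{k+1}$, which by construction stays in $\bar G$). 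One small point: as you note under ``main obstacle'', the \emph{statement} of Lemma~\ref{bouncary_collision_error_lemma} compares to $u(t_{k+1},Q_k,\hat P_{k+1})$, not to $\tilde u(t_{k+1},\bar Q_{k+1},\hat P_{k+1})$; what you actually need --- and what the paper also uses in its proof of Lemma~\ref{lemma4.4} --- is the intermediate expansion inside that lemma's proof, where the free-flight contribution $h\,\nabla_q u\cdot\hat P_{k+1}$ is displayed explicitly before being bounded.
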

\begin{proof}
Recall that the letter $C$ is used for positive constants and $m$ for positive integers which can change from line to line.
We note that it follows from (\ref{eq:momrecu}) that 
\begin{align*}
    \mathbb{E}[|P_{k+1}|^{2m} \mid Q_{k},P_{k}]\leq C(h+|P_{k}|^{2m}),
\end{align*}
where $C>0$ is independent of $h$, $P_{k}$, $Q_k$.

We have
\begin{align}
u(&t_{k+1},Q_{k+1},P_{k+1}) -u(t_{k},Q_{k},P_{k}) =  (u(t_{k+1},Q_{k+1},P_{k+1}) - u(t_{k+1},Q_{k},\hat{P}_{k+1}))I_{G}(\hat{Q}_{k+1}) 
\label{eq:lem44_1} \\ 
& +(u(t_{k+1},Q_{k+1},P_{k+1}) - u(t_{k+1},Q_{k},\hat{P}_{k+1}))I_{\bar{G}^{c}}(\hat{Q}_{k+1}) +
(u(t_{k+1},Q_{k},\hat{P}_{k+1}) - u(t_{k},Q_{k},P_{k})) \notag \\
&=: A_{1} + A_{2} + A_{3}. \notag
\end{align}
Consider the first term. Using Taylor's expansion and Assumption~\ref{cld_as:4a}, we obtain 
\begin{align*}
    A_{1} &:= (u(t_{k+1},Q_{k+1},P_{k+1}) - u(t_{k+1},Q_{k},\hat{P}_{k+1}))I_{G}(\hat{Q}_{k+1}) 
   \\ & = (u(t_{k+1},Q_{k+1},\hat{P}_{k+1}) - u(t_{k+1},Q_{k},\hat{P}_{k+1}))I_{G}(\hat{Q}_{k+1})
   \\&= \Big(h\big(\hat{P}_{k+1}\cdot\nabla_{q}u(t_{k+1},Q_{k},\hat{P}_{k+1})\big) + r_1\Big)I_{G}(\hat{Q}_{k+1}), 
\end{align*}
where the remainder $r_1$ is
\begin{equation*}
    r_1 =  (h^2/2) (\hat{P}_{k+1})^{\top}\nabla^2 u(t_{k+1}, Q_k + \alpha h \hat{P}_{k+1})\hat{P}_{k+1} 
\end{equation*}
with $\alpha \in (0,1)$ and it 
satisfies 
\[
|\mathbb{E}[r_1 I_{G}(\hat{Q}_{k+1})\mid Q_{k},P_{k}]| \leq Ch^2 (1 + |P_{k}|^{2m})
\]
with $C>0$ being a constant independent of $h$, $P_{k}$ and $Q_k$.

Using Lemma~\ref{bouncary_collision_error_lemma}, we get
\begin{align*}
    &A_{2}:= (u(t_{k+1},Q_{k+1},P_{k+1}) - u(t_{k+1},Q_{k},\hat{P}_{k+1}))I_{G^{c}}(\hat{Q}_{k+1}) \\   & = h(\nabla_q u(t_{k+1}, Q_k , \hat{P}_{k+1}) \cdot \hat{P}_{k+1}) + C h( 1+ |P_{k+1}|^{2m}) \sum_{i=1}^{\kappa}(n_G(Q_{k+1, c, i})\cdot \tilde{P}_{k+1, i-1}) \\   &  \quad + C h^2 (1 + |P_{k+1}|^{2m}).
\end{align*}
 
 Combining the obtained expressions for $A_{1}$ and $A_{2}$, we ascertain that
 \begin{align*}
A_{1} + A_{2} &=  h (\hat{P}_{k+1} \cdot \nabla_{q}u(t_{k+1}, Q_{k}, \hat{P}_{k+1}))  \nonumber \\ 
& \;\;\; + Ch( 1+ |P_{k+1}|^{2m}) \sum_{i=1}^{\kappa}(n_G(Q_{k+1, c, i})\cdot \tilde{P}_{k+1, i-1}) + r_2,
 \end{align*}
 with $r_2$ satisfying 
  \begin{equation}
 |\mathbb{E}[r_2 \mid Q_{k},P_{k}]| \leq Ch^2 (1 + |P_{k}|^{2m}) \label{eq:lem44_r5}
 \end{equation} 
 with $C>0$ being a constant independent of $h$, $P_{k}$, $Q_k$. 
Taking the expectation conditioned on $(Q_{k}, P_{k})$, we get
\begin{align}
    &\mathbb{E}(A_{1} + A_{2}|\;Q_{k},P_{k}) =  h \mathbb{E}\big[(\hat{P}_{k+1} \cdot \nabla_{q}u(t_{k+1}, Q_{k}, \hat{P}_{k+1}))|Q_{k},P_{k}\big] 
      \nonumber \\ 
     & \;\;\; + C h( 1+ |P_{k}|^{2m}) \mathbb{E}\Big(\sum_{i=1}^{\kappa}(n_G(Q_{k+1, c, i})\cdot \tilde{P}_{i-1})\; | Q_k ,P_k\Big)  + \mathbb{E}[ r_{2} \mid Q_{k},P_{k}]\nonumber\\ 
     & =  h \mathbb{E}\big[(P_{k} \cdot \nabla_{q}u(t_{k}, Q_{k}, P_{k}))|Q_{k},P_{k}\big]  + r_{3}  + C h( 1+ |P_{k}|^{2m}) \mathbb{E}\Big(\sum_{i=1}^{\kappa}(n_G(Q_{k+1, c, i})\cdot \tilde{P}_{i-1})\; | Q_k , P_k \Big) ,\label{cld_eq_5.20}
\end{align}
where we have expanded $\nabla_{q}u(t_{k+1}, Q_{k}, \hat{P}_{k+1})$ around $P_{k}$ and then $\nabla_{q} u(t_{k+1}, Q_{k}, P_{k})$ around $t_{k}$,
and $r_3$ satisfies  
  \begin{equation}
 |r_3| \leq Ch^2 (1 + |P_{k}|^{2m}) \label{eq:lem44_r6}
 \end{equation} 
 with $C>0$ being a constant independent of $h$, $P_{k}$, $Q_k$. 
 
With the usual Taylor expansion calculation for the Euler scheme for SDEs (see e.g. \cite[Chap. 7]{milstein2021stochastic}), we can write
\begin{align}
    \mathbb{E}(A_{3}|\;Q_{k},P_{k}) & = h\frac{\partial}{\partial t}u(t_{k},Q_{k}, P_{k}) +  h(b_{k}\cdot\nabla_{p}u(t_{k},Q_{k},P_{k})) + h\frac{\sigma^{2}}{2}\Delta_{p}u(t_{k},Q_{k},P_{k})  +r_{4}, \label{cld_eq_5.21}
\end{align}
where $r_{4}$ satisfies (\ref{eq:lem44_r6}).

Thus, using (\ref{cld_eq_5.20}), (\ref{cld_eq_5.21}) and (\ref{eq2.1}), we infer
\begin{align*}
    &|\mathbb{E}(u(t_{k+1}, Q_{k+1}, P_{k+1}) - u(t_{k},Q_{k},P_{k})|\;Q_{k}, P_{k})|\\
    & \leq C\big(h^{2}(1+|P_{k}|^{2m}) + C h( 1+ |P_{k}|^{2m}) \mathbb{E}\Big(\sum_{i=1}^{\kappa}(n_G(Q_{k+1, c, i})\cdot \tilde{P}_{k+1, i-1})\; | Q_k , P_k \Big),
\end{align*}
where $C>0$ is independent of $h$ and $P_{k}$. 

\end{proof} 

The estimate in the above local error lemma contains the term of order $\mathcal{O}(h)$ corresponding to the error of approximating elastic reflections. Consequently, the next lemma, related to the average number of collisions when the Markov chain $(Q_{k},P_{k})$ encounters the boundary $\Sigma_{+}$, plays a major role in global convergence analysis of the considered first-order methods.   
Its proof rests on the boundary value problem associated with the Markov chain $(t_{k},Q_{k}, P_{k})$ (cf. (\ref{eq:Koop})):
\begin{align}
    \mathbb{T}V(t,q,p) - V(t,q,p) &= -g(t,q,p),  &(t,q,p) \in [T_{0}, T-h]\times\bar{G}\times \mathbb{R}^{d}, \label{cld_eq:5.1} \\
    V(t,q,p) &= 0,       &(t,q,p) \in \{T\} \times \bar{G}\times \mathbb{R}^{d},\label{cld_eq:5.2}
\end{align}
where $\mathbb{T}$ is the one-step transition operator:
\begin{equation*}
    \mathbb{T}V(t,q,p) = \mathbb{E}V(t_{1}, Q_{1}, P_{1}), \; \; (t_{0},Q_{0},P_{0}) = (t,q,p),
\end{equation*}
and $g(t,q,p) \geq 0$. The solution to this problem is given by \cite{20, 46,milstein2021stochastic}:
\begin{equation}
    V(t_{0},q,p) =  \mathbb{E}\bigg(\sum\limits_{k=0}^{N-1}g(t_{k},Q_{k},P_{k})\Big| Q_{0} = q,P_{0} = p\bigg). \label{eq:5.3}
\end{equation}

\begin{lemma} \label{bl}
Let  Assumptions~\ref{cld_as:1}-\ref{as:2} hold, then for any integer $m \geq 1$ the following estimate holds for 
[PA$_c$] scheme:
\begin{equation}
    \mathbb{E}\bigg(\sum\limits_{k=0}^{N-1}\sum_{i=1}^{\kappa_k}(n_{G}(Q_{k+1, c, i})\cdot \tilde{P}_{k+1, i-1})(1+|P_{k+1}|^{2m})I_{G^{c}}\big(\hat{Q}_{k+1}\big) \bigg)\leq C, \label{eqn_cld_5.11}
\end{equation}
where $\kappa_k := \kappa (Q_k, \hat{P}_{k+1}, h)$, $k=0, \dots, N-1$, and $C>0$ is a constant independent of $h$.
\end{lemma}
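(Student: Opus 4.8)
The plan is to realize the left-hand side of (\ref{eqn_cld_5.11}) as the value at $t_0$ of a solution of the discrete boundary value problem (\ref{cld_eq:5.1})--(\ref{cld_eq:5.2}) and then bound it by constructing a Lyapunov-type supersolution, in the spirit of Proposition~\ref{prop:mult} but now for the full stochastic chain over the fixed interval $[0,T]$. Write $\hat p := p+\delta(q,p;h,\xi)$ and set $g(t,q,p):=\mathbb{E}\big[\sum_{i=1}^{\kappa}(n_G(Q_{c,i})\cdot\tilde P_{i-1})(1+|\hat p|^{2m})I_{G^c}(\hat Q_1)\mid Q_0=q,\,P_0=p\big]$, where $\kappa=\kappa(q,\hat p,h)$ and $Q_{c,i},\tilde P_i$ are produced by the $A_c$ step started at $(q,\hat p)$. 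Since each incoming normal speed satisfies $n_G(Q_{c,i})\cdot\tilde P_{i-1}>0$, we have $g\ge0$; moreover $g=0$ unless $\dist(q,\partial G)\le C'h(1+|p|)$ when $\xi$ is bounded (in the Gaussian case one has instead $g\le C(1+|p|^{2M_0})e^{-c\,\dist(q,\partial G)^2/h^2}$, which on $\{\dist>R_0\}$ is $\ll h$). By the tower property and the conservation $|P_{k+1}|=|\hat P_{k+1}|$, the left-hand side of (\ref{eqn_cld_5.11}) equals $\mathbb{E}\sum_{k=0}^{N-1}g(t_k,Q_k,P_k)=V(t_0,Q_0,P_0)$, where $V$ solves (\ref{cld_eq:5.1})--(\ref{cld_eq:5.2}) with this $g$ (see (\ref{eq:5.3})). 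Hence it suffices to produce $\bar V\ge0$ with $\bar V(T,\cdot)\ge0$ and $\mathbb{T}\bar V-\bar V\le-g$ for all small $h$: then $\bar V-V$ is a supermartingale along the chain and $V(t_0,Q_0,P_0)\le\bar V(t_0,Q_0,P_0)=:C$, a constant independent of $h$.

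For the supersolution I take $\bar V(t,q,p)=A\big(e^{\lambda(T-t)}-1\big)(1+|p|^{2M})+W(q,p)$, with $W(q,p)=\chi\big(\dist(q,\partial G)\big)\big[a(1+|p|^{2m+2})+c\,(B(q)\cdot p)(1+|p|^{2m})\big]$, where $\chi$ is a smooth cutoff equal to $1$ on $\{\dist<R_0\}$ and to $0$ on $\{\dist>2R_0\}$, with $R_0$ fixed so that the nearest-point projection onto $\partial G$ is unique on $\{\dist<2R_0\}$; $B\in C^1(\bar G;\mathbb{R}^d)$ is bounded with $B(q)=n_G(q^\pi)$ for $\dist(q,\partial G)<2R_0$ (so $B(q)\cdot n_G(q)=1$ on $\partial G$, exactly as in Proposition~\ref{prop:mult}); $c=\tfrac12$; $a$ is chosen so large that $W\ge0$ (possible since $|B|$ is bounded and the degree $2m+2$ exceeds that of $(B(q)\cdot p)(1+|p|^{2m})$); and $M,\lambda,A$ are then fixed in this order with $M>m+3$.

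To check $\mathbb{T}\bar V-\bar V\le-g$, split the one-step transition into the momentum substep $P_k\mapsto\hat P_{k+1}$ (position frozen) and the $A_c$ substep $(Q_k,\hat P_{k+1})\mapsto(Q_{k+1},P_{k+1})$ (momentum norm frozen). The momentum moment recursion (\ref{eq:momrecu}) (equivalently Lemma~\ref{cld_lemma_4.2}) gives $\mathbb{E}[(1+|P_{k+1}|^{2M})\mid Q_k,P_k]\le(1+C_Mh)(1+|P_k|^{2M})$, so with $\lambda=C_M+1$ the leading term contributes at most $-\tfrac12Ah(1+|p|^{2M})$ to $\mathbb{T}\bar V-\bar V$, uniformly. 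The momentum substep changes $W$ by $O\big(h(1+|p|^{2m+2})\big)$ in conditional expectation (using $\mathbb{E}\xi=0$ and the linear growth of $b$). In the $A_c$ substep the term $\chi\,a(1+|p|^{2m+2})$ changes by $O\big(h(1+|p|^{2m+4})\big)$, because $|Q_{k+1}-Q_k|\le h|\hat P_{k+1}|+2\sum_i(h-\vartheta_i)(n_G(Q_{c,i})\cdot\tilde P_{i-1})=O\big(h(1+|\hat P_{k+1}|^2)\big)$ by (\ref{cld_new_eq_3.16}) and Proposition~\ref{prop:mult} and $\chi$ is Lipschitz; while the key term $c\,(B(q)\cdot p)(1+|p|^{2m})\chi$ changes, after summing the reflections $\tilde P_i=\tilde P_{i-1}-2(n_G(Q_{c,i})\cdot\tilde P_{i-1})n_G(Q_{c,i})$ and Taylor-expanding $B$ along each leg of total length $\sum_i\tau_i\le h$, by $-2c\,(1+|\hat P_{k+1}|^{2m})\sum_{i=1}^{\kappa}(n_G(Q_{c,i})\cdot\tilde P_{i-1})\,I_{G^c}(\hat Q_{k+1})+O\big(h(1+|\hat P_{k+1}|^{2m+2})\big)$, where $B(Q_{c,i})\cdot n_G(Q_{c,i})=1$ is used and the remainder absorbs the Taylor errors $\sum_i O(\tau_iJ_{\max}|\hat P_{k+1}|^2)$, the final ballistic sub-move, and the variation of $\chi$. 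Collecting these and taking conditional expectations, $\mathbb{T}\bar V-\bar V\le-2c\,g+C_1h(1+|p|^{2m+4})-\tfrac12Ah(1+|p|^{2M})$, which is $\le-g$ for all small $h$ once $2c=1$, $M>m+3$, and $A$ is large. The only regime in which ``$Q_k$ and $Q_{k+1}$ both lie in $\{\chi\equiv1\}$ on a collision step'' can fail is $|p|\gtrsim h^{-1/2}$; this contribution, after taking the full expectation and summing the $\le T/h$ steps, is $O(h^{K})$ for arbitrarily large $K$ since all moments of $P_k$ are bounded (Lemma~\ref{cld_lemma_4.2}), hence negligible.

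With such a $\bar V$ in hand, $\bar V-V$ satisfies $\mathbb{T}(\bar V-V)\le\bar V-V$ and $(\bar V-V)(T,\cdot)=W\ge0$, so $(\bar V-V)(t_k,Q_k,P_k)$ is a supermartingale; evaluating at $k=0$ and $k=N$ yields $V(t_0,Q_0,P_0)\le\bar V(t_0,Q_0,P_0)=A(e^{\lambda T}-1)(1+|P_0|^{2M})+W(Q_0,P_0)<\infty$, independent of $h$, which is (\ref{eqn_cld_5.11}). The main obstacle is the design of the correction $W$: one step carrying $\kappa$ collisions must decrease $W$, to leading order, by exactly $2c$ times the quantity $\sum_i(n_G(Q_{c,i})\cdot\tilde P_{i-1})(1+|\hat P_{k+1}|^{2m})$ that appears in (\ref{eqn_cld_5.11}) — this is what forces the ``reflecting'' field $B\approx n_G$ together with $B\cdot n_G=1$ on $\partial G$, as in Proposition~\ref{prop:mult} — while keeping $W\ge0$, keeping every error term (boundary curvature, multiple within-step collisions, the momentum substep, the $\chi$-cutoff, and the large-$|p|$ tail) uniformly $O(h\cdot\text{polynomial})$ and dominated by the decaying leading term, and keeping the polynomial degrees consistent throughout.
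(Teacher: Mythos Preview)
Your approach is correct and follows the same line as the paper: realize the sum via the discrete boundary value problem (\ref{cld_eq:5.1})--(\ref{cld_eq:5.2}) and build a supersolution containing the key term $(B(q)\cdot p)(1+|p|^{2m})$ with $B=n_G$ near $\partial G$, so that each reflection $p\mapsto p-2(n\cdot p)n$ produces precisely the drop $-2(n\cdot p)(1+|p|^{2m})$. The paper's construction differs only cosmetically: it dispenses with the cutoff $\chi$ by extending $B$ to a global $C^1$ field on $\bar G$ (H\"older extension), which removes the need for your large-$|p|$ tail argument altogether; and it takes the leading polynomial power to be $2m+2$ rather than $2M>2m+6$, absorbing the $O\big(h(1+|p|^{2m+2})\big)$ remainders by choosing $K$ large in the time factor $e^{K(T-t)}$ instead of by raising the degree in $|p|$. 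One small slip in your write-up: the threshold for a collision step to start outside $\{\chi\equiv1\}$ is $|\hat P_{k+1}|\gtrsim h^{-1}$ (since $\dist(Q_k,\partial G)>R_0$ forces $h|\hat P_{k+1}|>R_0$), not $h^{-1/2}$; this does not affect the argument, as the moment bounds still make that contribution negligible.
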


\begin{proof}

 Let us take $g(t_k,q,p)$ in (\ref{cld_eq:5.1}) as 
\[
g(t_k,q,p)= \mathbb{E}\bigg[ \sum_{ i = 1}^{\kappa }(n_{G}(Q_{k+1, c, i})\cdot \tilde{P}_{k+1, i-1})(1+|P_{k}|^{2m})I_{G^{c}}(\hat{Q}_{k+1})|\;Q_k=q,P_k=p\bigg],
\]
where $ \kappa = \kappa (q, \hat{P}, h)=\kappa (q, p, h,\xi_{k+1})$ with $\hat P =p+hb(q,p)+h^{1/2}\sigma \xi_{k+1}$. Note that thanks to Proposition~\ref{prop:mult}, $g(t_k,q,p)$ is well defined. 
Then the solution of (\ref{cld_eq:5.1})-(\ref{cld_eq:5.2}) is given by
\begin{align*}
   v&(t_{0},q,p) =  
   \mathbb{E}\bigg(\sum_{k=0}^{N-1} g(t_{k},Q_{k}, P_{k}) \big| Q_{0} = q , P_{0} = p\bigg)
   \\ & =\mathbb{E}\bigg(\sum\limits_{k=0}^{N-1}\mathbb{E}\Big( \sum_{i = 1}^{\kappa_{k}}(n_{G}(Q_{k+1 ,c, i})\cdot \tilde{P}_{k+1, i-1})(1+|P_{k+1}|^{2m})I_{G^{c}}\big(\hat{Q}_{k+1}\big)\big| Q_{k},P_{k}\Big) \Big| Q_{0} = q, P_{0} = p \bigg)  
   \\ &=\mathbb{E}\bigg(\sum\limits_{k=0}^{N-1}\sum_{i=1}^{\kappa_{k} }\big(n_{G}(Q_{k+1, c, i})\cdot \tilde{P}_{k+1, i-1}\big)\big(1 + |P_{k+1}|^{2m}\big)I_{G^{c}}\big(\hat{Q}_{k+1}\big)\Big| Q_{0} = q, P_{0} = p \bigg).
\end{align*}

If we can find a function $V(t,q,p)$ with $g(t,q,p)$, satisfying (\ref{cld_eq:5.1})-(\ref{cld_eq:5.2}), such that 
\begin{equation}\label{eq:forg}
g(t,q,p) \geq \mathbb{E}\big[ \sum_{i=1}^{\kappa }(n_{G}(Q_{c, i})\cdot \tilde{P}_{i-1})(1+|P_{1}|^{2m})I_{G^{c}}(\hat{\mathcal{Q}}_{1}) | q, p\big],\quad  \kappa := \kappa (q,\hat{P}_1, h),
\end{equation}
then the inequality, $v(t_{0},Q_{0},P_{0}) \leq V(t_{0},Q_{0},P_{0})$ will hold. Therefore, the aim is to construct 
a function $g = -(\mathbb{T}V - V)$ satisfying (\ref{eq:forg}). 

Consider a boundary zone $G_{R_{0}} \subset G$ defined as $G_{R_{0}} := \{ q \in \bar{G}\; ; \; \dist(q,\partial G) < R_{0}\}$. Due to \cite[Appendix]{104},  there is an $R_{0}$ such that the projection of $q$ on $\partial G$ is unique whenever $q \in G_{R_{0}}$. Introduce the function
\begin{equation*}
    V(t,q,p) =  \begin{cases}
    0,& (t, q, p) \in \{T\}\times\bar{G}\times \mathbb{R}^{d},\\
    (A + e^{K(T-t)})(1+| p|^{2m +2})  +  (B(q)\cdot p)(1+|p|^{2m}), & (t, q, p) \in [0, T-h]\times\bar{G}\times \mathbb{R}^{d},
    \end{cases} 
\end{equation*}
where $B(q) \in C^{3}(\bar{G})$ is  an $\mathbb{R}^{d}$ valued function such that  $B(q) : = n_{G}(q^{\pi})$ when $q \in \bar{G}_{R_{0}}$; here $q^{\pi}$ is the projection of $q$ on $\partial G$. Note that $n_{G}(q) \in C^{4}(\bar{G})$ due to Assumption~\ref{cld_as:1} (see \cite[Appendix]{104}). We extend the function $B(q)$ from $G_{R_{0}}$  to $\bar{G}$ so that $B(q) \in C^{3}(\bar{G})$ and $|B(q)|_{G}^{2,\alpha} \leq C |n(q^{\pi})|_{G_{R_{0}}}^{2,\alpha}$, where $|\cdot|^{2,\alpha}$ is the H\"older norm and $C>0$ depends on $G$. Such an extension is possible due to \cite[Lemma 6.37]{104}. 
We choose the constant $A>0$  so that $V(t,q,p)$ is always positive. 
The choice of the constant $K>0$ will be discussed later in the proof. 

The proof of Lemma~\ref{cld_lemma_4.2} implies for any $\ell \ge 1$
 \begin{align}
     & |P_{1}|^{2\ell} = |\hat{P}_{1}|^{2\ell}  
     = |p|^{2\ell}  +2\ell\sigma h^{1/2}(\xi \cdot p)|p|^{2\ell-2} +  r_{1}, \label{cld_eqn_4.16}
 \end{align}
 and
 \begin{align}
       & \mathbb{E}[|P_{1}|^{2\ell}\;|q,p] = |p|^{2\ell} +  r_{2}, \label{cld_eqn_4.10}
 \end{align}
 where $r_{1}$ and $r_{2}$ satisfy for any $\ell\ge 1$:
 \[
     (\mathbb{E}(|r_{1}|^{2\ell}))^{1/(2\ell)} \leq Ch(1 + |p|^{2\ell })
 \]
 and
 \[
     |r_{2}| \leq Ch(1+|p|^{2\ell})
 \]
 with $C>0$ being independent of $h$ and $p$.
 
Given $(t_{k},Q_{k}, P_{k}) = (t,q,p)$, we now aim at calculating $-(\mathbb{T}V(t,q,p) - V(t,q,p))$. 
Using Taylor's theorem, we obtain 
\begin{align}
    B(Q_{1}) &= \big(B(q)  + h\textbf{J}_{B}(q +\epsilon_{0} h\hat{P}_{1})\hat{P}_{1} \big)I_{G}(\hat{Q}_{1})
     \nonumber \\ 
     & \;\;\; +  \big(B(Q_{c, \kappa}) + (h-\vartheta_{\kappa})\textbf{J}_{B}(Q_{c, \kappa}+ \epsilon_{\kappa}(h- \vartheta_{\kappa})\tilde{P}_{\kappa})\tilde{P}_{\kappa}\big)I_{G^{c}}(\hat{Q}_{1}), 
 \end{align}
where $\epsilon_{\kappa} \in (0,1)$ and $\textbf{J}_{B}(x)$ denotes the Jacobian matrix of $B$ evaluated at $x$. Therefore, we get
\begin{align}
     (B(Q_{1})&\cdot P_{1}) = (B(Q_{1})\cdot \hat{P}_{1}) I_{G}(\hat{Q}_{1})  +  (B(Q_{1}) \cdot P_{1})I_{G^{c}}(\hat{Q}_{1}) \nonumber  \\
     &  = \big((B(q)\cdot \hat{P}_{1}) + h(\textbf{J}_{B}(q +\varepsilon_{0} h\hat{P}_{1})\hat{P}_{1} \cdot \hat{P}_{1}) \big)I_{G}(\hat{Q}_{1}) \nonumber \\& \;\;\;  +   \big((B(Q_{c,\kappa})\cdot  \tilde{P}_{\kappa}) + (h-\vartheta_{\kappa})(\textbf{J}_{B}(Q_{c, \kappa}+ \epsilon_{\kappa}(h- \vartheta_{\kappa})\tilde{P}_{\kappa})\tilde{P}_{\kappa} \cdot \tilde{P}_{\kappa})I_{G^{c}}(\hat{Q}_{1}) \big)
    \nonumber \\ 
     &  = \big((B(q)\cdot \hat{P}_{1}) + h(\textbf{J}_{B}(q +\varepsilon_{0} h\hat{P}_{1})\hat{P}_{1} \cdot \hat{P}_{1}) \big)I_{G}(\hat{Q}_{1}) \nonumber \\
     & \;\;\;  +   \big((B(Q_{c,\kappa})\cdot  \tilde{P}_{\kappa-1}) - 2 (\tilde{P}_{\kappa-1} \cdot n_{G}(Q_{c, \kappa}))(B(Q_{c,\kappa})\cdot n_{G}(Q_{c,\kappa}))
     \nonumber  \\
     & \;\;\; 
     + (h-\vartheta_{\kappa})(\textbf{J}_{B}(Q_{c,\kappa}+ \epsilon_{\kappa}(h- \vartheta_{\kappa})\tilde{P}_{\kappa})\tilde{P}_{\kappa} \cdot P_{1})\big)I_{G^{c}}(\hat{Q}_{1}) ,
\end{align}
where $\varepsilon_{0} \in (0,1)$. 
Since $(B(Q_{c, \kappa})\cdot n_{G}(Q_{c, \kappa})) = 1$ (by construction of $B$), we have
\begin{align}
     (B(Q_{1})\cdot P_{1})  &  = \big((B(q)\cdot \hat{P}_{1}) + h(\textbf{J}_{B}(q +\varepsilon_{0} h\hat{P}_{1})\hat{P}_{1} \cdot \hat{P}_{1}) \big)I_{G}(\hat{Q}_{1}) \nonumber \\
     & \;\;\;  +   \big((B(Q_{c,\kappa})\cdot  \tilde{P}_{\kappa-1}) - 2 (\tilde{P}_{\kappa-1} \cdot n_{G}(Q_{c, \kappa}))I_{G^{c}}(\hat{Q}_{1})
     \nonumber  \\
     & \;\;\; 
     + (h-\vartheta_{\kappa})(\textbf{J}_{B}(Q_{c,\kappa}+ \epsilon_{\kappa}
     (h- \vartheta_{\kappa})\tilde{P}_{\kappa})\tilde{P}_{\kappa} \cdot P_{1}) I_{G^{c}}(\hat{Q}_{1}) \big). 
\end{align}
For the sake of convenience in notation below, we denote $Q_{c,0} := q$. Using Taylor's theorem, we have
 \begin{align}    
     B(Q_{c, i}) &= B(Q_{c, i-1}) +  \tau_i \textbf{J}_{B}(Q_{c,i-1} +\epsilon_{i-1} \tau_{i} \tilde{P}_{i-1})\tilde{P}_{i-1},\quad i = 1,\dots, \kappa, \label{cld_neweq_5.13}      
\end{align} 
where $\epsilon_i \in (0,1) $, $i = 1, \dots, \kappa-1$. Also,
\begin{align}
     \tilde{P}_i = \tilde{P}_{i-1} - 2 (n(Q_{c,i})\cdot \tilde{P}_{i-1}) \tilde{P}_{i-1}, \quad i=1,\dots, \kappa-1.   \label{cld_neweq_5.14}   
\end{align}
With the repeated use of (\ref{cld_neweq_5.13})-(\ref{cld_neweq_5.14}) and the fact that $(B(Q_{c,i}) \cdot n_G(Q_{c,i})) = 1$, $i=1,\dots,\kappa$, we obtain
\begin{align}   
(B(Q_{1})&\cdot P_{1})   = \big((B(q)\cdot \hat{P}_{1}) + h(\textbf{J}_{B}(q +\varepsilon_{0}  h\hat{P}_{1})\hat{P}_{1} \cdot \hat{P}_{1}) \big)I_{G}(\hat{Q}_{1}) \nonumber \\
& \;\;\;  +   \big((B(q)\cdot  \tilde{P}_{0})  - 2\sum_{i=1}^{\kappa} (\tilde{P}_{i-1} \cdot n_{G}(Q_{c, i}))
\nonumber \\ 
& \;\;\; + (h-\vartheta_{\kappa})(\textbf{J}_{B}(Q_{c, \kappa}+ \epsilon_{\kappa}(h- \vartheta_{\kappa}) \tilde{P}_{\kappa})\tilde{P}_{\kappa} \cdot \tilde{P}_{\kappa})
\nonumber \\  
& \;\;\; 
+ \sum_{i=1}^{\kappa}\tau_i (\textbf{J}_{B}(Q_{c,i-1} + \epsilon_{i-1} \tau_{i}\tilde{P}_{i-1})\tilde{P}_{i-1} \cdot \tilde{P}_{i-1})\big)I_{G^{c}}(\hat{Q}_{1}) 
. \label{eq:lem44_18}
\end{align}
Due to the boundedness of first derivatives of $ B$, we ascertain
\begin{align*}
    (\textbf{J}_{B}(q +\varepsilon_{0}  h\hat{P}_{1})\hat{P}_{1} \cdot \hat{P}_{1})& \leq C|\hat{P}_{1}|^{2}, \\
    (\textbf{J}_{B}(Q_{c, L}+ \epsilon_{\kappa}(h- \vartheta_\kappa)\tilde{P}_{\kappa})\tilde{P}_{\kappa} \cdot \tilde{P}_{\kappa}) & \leq  C|\tilde{P}_{\kappa}|^{2} =C|\hat{P}_{1}|^{2}, \\
    (\textbf{J}_{B}(Q_{c,i} +\epsilon_{i-1} \tau_i \tilde{P}_{i})\tilde{P}_{i} \cdot \tilde{P}_{i}) &  \leq C|\tilde{P}_{i}|^{2} = C|\hat{P}_1|^2, 
\end{align*}
where $C>0$ is a generic constant independent of $\hat{P}_{1}$. Using these estimates, (\ref{eq:lem44_18}) becomes 
\begin{align}   \label{cld_eqn_4.32}
(B(Q_{1})&\cdot P_{1}) = (B(q)\cdot \hat{P}_{1}) - 2 \sum_{i=1}^{\kappa}(\tilde{P}_{i-1} \cdot n_{G}(Q_{c, i}))I_{G^{c}}(\hat{Q}_{1}) + r_{3},
\end{align}
where $|r_{3}| \leq Ch|\hat{P}_{1}|^{2}$ with $C >0$ being independent of $h$ and $\hat{P}_{1}$.


Using (\ref{cld_eqn_4.16}) and (\ref{cld_eqn_4.10}), we get
\begin{align}
    \mathbb{E}\big[(B(q) &\cdot \hat{P}_{1}) (1 + |P_{1}|^{2m})\big] 
    = (B(q)\cdot p)\mathbb{E}[(1 + |P_{1}|^{2m})] + h(B(q)\cdot b)\mathbb{E}[(1 + |P_{1}|^{2m})] \nonumber \\ 
    & \;\;\; + \sigma h^{1/2}\mathbb{E}[(B(q)\cdot \xi)(1 + |P_{1}|^{2m})]
    = (B(q)\cdot p)(1 + |p|^{2m}) + r_{4},   \label{cld_eqn_4.22}
\end{align}
where $|r_{4}|\leq Ch(1 + |p|^{2m+1})$ with $C>0$ being independent of $h$ and $p$. From (\ref{cld_eqn_4.32}) and (\ref{cld_eqn_4.22}), we have
\begin{align}
    \mathbb{E}[(B(Q_{1})\cdot P_{1})(1 + |P_{1}|^{2m})]
    &= (B(q)\cdot p)(1 + |p|^{2m})   \nonumber  \\ 
    & - 2\mathbb{E}\Big[\sum_{i=1}^{\kappa} (\tilde{P}_{i-1} \cdot n_{G}(Q_{c, i}))(1 + |P_{1}|^{2m})I_{G^{c}}(\hat{Q}_{1}) \Big] + r_{5}, \label{cld_eqn_4.23}
\end{align}
where $|r_{5}|\leq Ch(1 + |p|^{2m+2})$ with $C>0$ being independent of $h$ and $p$.

Using (\ref{cld_eqn_4.10}) with $\ell = m+1$, we have
\begin{align}
    &(A + e^{K(T-t- h)})(1 + \mathbb{E}[|P_{1}|^{2m +2}])  
    =  \big(A + e^{K(T-t)}(1 - Kh + K^{2}r_6\big)\nonumber  \big(1 + |p|^{2m +2} +  r_{2}\big) \nonumber \\ 
    & =  (A + e^{K(T-t)})(1 + |p|^{2m +2} + r_{2}) 
    -K h e^{K(T-t)}(1 + |p|^{2m +2}) +r_7 e^{K(T-t)}, \label{cld_eqn_4.24}
\end{align}
where $r_6 \leq Ch^{2}$ and $r_7 \leq Ch^{2}(1+K^2)(1 + |p|^{2m +2})$ with $C>0$ independent of $h$, $K$ and $p$.
Recall that $r_{2} \leq Ch(1+|p|^{2m +2})$, where $C>0$ is independent of $h$, $K$ and $p$.
 Therefore, using (\ref{cld_eqn_4.23}) and (\ref{cld_eqn_4.24}), we get
\begin{align}
\mathbb{E}[&V(t_{1},Q_{1}, P_{1})] - V(t,q,p) =  (A + e^{K(T-t- h)})(1 + \mathbb{E}[|P_{1}|^{2m +2}]) \nonumber \\ 
& \;\;\;\;+ \mathbb{E}[(B(Q_{1})\cdot P_{1})(1 + |P_{1}|^{2m})]  - (A + e^{K(T-t)})(1 + |p|^{2m + 2}) 
- (B(q)\cdot p)(1 + |p|^{2m}) \nonumber  \\ 
& = (A + e^{K(T-t)})r_{2}-K h e^{K(T-t)}(1 + |p|^{2m +2}) +r_7 e^{K(T-t)}+r_5\nonumber  \\ 
& \;\;\;  - 2 \mathbb{E}\Big[\sum_{i=1}^{\kappa}(\tilde{P}_{i-1} \cdot n_{G}(Q_{c, i}))( 1+ |P_{1}|^{2m}) I_{G^{c}}(\hat{Q}_{1})\big| q, p\Big]. 
\label{cldeqn5.15}
\end{align}
Hence, there are sufficiently large $K>0$ and sufficiently small $h>0$ so that $g(t,q,p) := - (\mathbb{T}V-V) \geq \mathbb{E}\big[ \sum_{i=1}^{\kappa}(\tilde{P}_{i-1}\cdot n_{G}(Q_{c, i}))(1+|P_{1}|^{2m})I_{G^{c}}(\hat{Q}_{1})|q,p\big] $. This proves that (\ref{eqn_cld_5.11}) holds with a constant $C>0$  independent of $h$ (it depends on $P_0$ and $T$). 

\end{proof}

\begin{remark}
On the physical level of rigour, the meaning of Lemma~\ref{bl} is that the average number of steps when the Markov chain has collisions with the boundary does not depend on the time step $h$. This is in contrast to the case of reflected SDEs (\ref{rgsde}), where the average number of steps when an approximating Markov chain interacts with the boundary is of order ${\cal O} (1/\sqrt{h})$ \cite{lst23}. The latter leads to additional difficulties in constructing higher order weak methods for (\ref{rgsde}) (see \cite{lst23,sharma2022random}), while the former contributes to possibilities for constructing second-order schemes for Langevin dynamics.  
\end{remark}

Now we are ready to prove Theorem~\ref{theorem4.1}.

\begin{proof}[\textbf{Proof of Theorem~\ref{theorem4.1}}] Using Lemma~\ref{lemma4.4}, we have 
\begin{align*}
    |\mathbb{E}(\varphi(Q_{N},P_{N})) - \mathbb{E}(\varphi(Q(T),P(T)))| = \Big|\mathbb{E}\Big(\sum\limits_{k=0}^{N-1}\mathbb{E}\big(u(t_{k+1},Q_{k+1},P_{k+1}) -u(t_{k},Q_{k},P_{k}) | Q_{k},P_{k}\big)\Big)\Big| \nonumber  \\ 
    \leq Ch^{2}\mathbb{E}\Big(\sum\limits_{k=0}^{N-1}|P_{k}|^{2m}\Big) + Ch\mathbb{E}\Big(\sum\limits_{k=0}^{N-1}\sum\limits_{i = 1}^{\kappa_k}(n_{G}({Q}_{k+1,c,i})\cdot\tilde{P}_{k+1, i-1})I_{G^{c}}(\hat{Q}_{k+1}) (1+|P_{k+1}|^{2m})\Big),
\end{align*}
which on applying Lemma~\ref{cld_lemma_4.2} and Lemma~\ref{bl} gives the required result.
\end{proof}

\subsection{Proof of Theorem~\ref{theorem4.2}}\label{sec:proof2}

The proof of Theorem~\ref{theorem4.2} has the same ingredients as the proof of Theorem~\ref{theorem4.1} in the previous section with the major difference that the global error bound in Theorem~\ref{theorem4.2} should not depend on the integration time $T$ since here we are interested in ergodic limits associated with (\ref{ecld}), while the error bound in Theorem~\ref{theorem4.1} for the approximation of (\ref{cld}) on the finite time interval $[0,T]$ can depend on $T$. We start (Lemma~\ref{cld_lemma_5.6}) by estimating moments of the momentum $P_{N}$ uniformly in $N$ and $h$. Next, in Lemma~\ref{lemma5.8},  we consider the one-step (local) error. An estimate related to the average number of collisions of the Markov chain with the boundary is obtained in Lemma~\ref{ebl}. 

\begin{lemma}\label{cld_lemma_5.6}
Let Assumptions~\ref{cld_as:1} and \ref{as:5} hold and $ h >0 $ is sufficiently small, then the moments of the momentum $P_{k}$ constructed according to the scheme [PA$_c$] applied to (\ref{ecld}) are bounded, i.e. for any $m \geq 1$:
\begin{equation}\label{cld_eqn_5.45}
    \mathbb{E}(|P_{k}|^{2m}) \leq C,
\end{equation}
where  $C>0$ is independent of $h$ and $T$.
\end{lemma}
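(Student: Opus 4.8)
The plan is to exploit the genuine dissipation present in (ECLD): the drift $b(q,p)=-\nabla_q U(q)-\gamma p$ contributes a contraction factor $(1-\gamma h)$ to the momentum update at each step. Unlike in the proof of Lemma~\ref{cld_lemma_4.2}, where the Gr\"onwall estimate produced an unavoidable $e^{KT}$ factor, here keeping this contraction yields a one-step recursion of the form $\mathbb{E}|P_{k+1}|^{2m}\le (1-ch)\mathbb{E}|P_k|^{2m}+Ch$ which iterates to a bound uniform in $h$, $N$ and $T$.

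First I would use the momentum-conservation identity \eqref{eq:PhatP}, $|P_{k+1}|^2=|\hat P_{k+1}|^2$, to reduce everything to estimating moments of the pre-collision momentum $\hat P_{k+1}$. Writing out the [PA$_c$] update for (ECLD),
\[
\hat P_{k+1}=(1-\gamma h)P_k-h\nabla_q U(Q_k)+h^{1/2}\sigma\,\xi_{k+1}=(1-\gamma h)P_k+\eta_{k+1},
\]
with $\eta_{k+1}:=h^{1/2}\sigma\xi_{k+1}-h\nabla_q U(Q_k)$. Crucially, $\nabla_q U$ is bounded on $\bar G$ since $G$ is bounded and $U\in C^\infty(\bar G)$ by Assumption~\ref{as:5}, and $\xi_{k+1}$ has mean zero and all moments bounded by \eqref{eq:xicond}. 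Hence, conditionally on $(Q_k,P_k)$, one has $\mathbb{E}[\eta_{k+1}]=O(h)$, $\mathbb{E}[|\eta_{k+1}|^{2\ell}]=O(h^{\ell})$, and $\mathbb{E}[(P_k\cdot\eta_{k+1})]=-h(P_k\cdot\nabla_q U(Q_k))=O(h|P_k|)$, all constants depending only on $m$, $\gamma$, $\beta$, $\sup_{\bar G}|\nabla_q U|$ and the moments of $\xi$.

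Next I would expand $|\hat P_{k+1}|^{2m}=\big((1-\gamma h)^2|P_k|^2+2(1-\gamma h)(P_k\cdot\eta_{k+1})+|\eta_{k+1}|^2\big)^m$ multinomially, take $\mathbb{E}[\,\cdot\mid Q_k,P_k]$, and isolate the leading term $(1-\gamma h)^{2m}|P_k|^{2m}$. Every other term carries at least one factor $h$ together with a power of $|P_k|$ strictly below $2m$ (the worst being the term linear in $\eta$, namely $2m(1-\gamma h)^{2m-1}|P_k|^{2m-2}\mathbb{E}[P_k\cdot\eta_{k+1}]=O(h|P_k|^{2m-1})$, whose sign is not controlled), so by Young's inequality each is bounded by $\varepsilon h|P_k|^{2m}+C_\varepsilon h$ for any $\varepsilon>0$. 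This yields, for $h$ small,
\[
\mathbb{E}[|P_{k+1}|^{2m}\mid Q_k,P_k]\le\big((1-\gamma h)^{2m}+\varepsilon h\big)|P_k|^{2m}+C_\varepsilon h .
\]
Since $(1-\gamma h)^{2m}=1-2m\gamma h+O(h^2)$, choosing $\varepsilon=m\gamma/2$ makes the bracket at most $1-\tfrac{m\gamma}{2}h$ once $h$ is small enough.

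Finally, taking full expectations and iterating $\mathbb{E}|P_{k+1}|^{2m}\le(1-\tfrac{m\gamma}{2}h)\mathbb{E}|P_k|^{2m}+Ch$ over $k$ gives a geometric sum, $\mathbb{E}|P_k|^{2m}\le(1-\tfrac{m\gamma}{2}h)^k|P_0|^{2m}+Ch\sum_{j\ge0}(1-\tfrac{m\gamma}{2}h)^j\le|P_0|^{2m}+\tfrac{2C}{m\gamma}$, which is the claimed bound, independent of $h$, $N$ and $T$. The only delicate point is the bookkeeping in the multinomial expansion: one must verify that the dissipative factor $(1-\gamma h)^{2m}$ genuinely dominates and that the unsigned $O(h|P_k|^{2m-1})$ term and all higher cross terms are absorbable; everything else is the standard Euler moment estimate already performed in Lemma~\ref{cld_lemma_4.2}, with the single change that here we retain, rather than discard, the contraction coming from $-\gamma h P_k$.
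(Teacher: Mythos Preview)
Your proposal is correct and follows essentially the same approach as the paper: both proofs use \eqref{eq:PhatP} to reduce to $\hat P_{k+1}$, extract the contraction coming from the $-\gamma h P_k$ part of the drift, absorb all cross terms of the form $h|P_k|^{r}$ with $r<2m$ via Young's inequality, and iterate the resulting one-step recursion to a uniform-in-time bound. The only cosmetic difference is that you factor out $(1-\gamma h)P_k$ and treat the rest as a perturbation $\eta_{k+1}$, so the dissipation appears as the explicit prefactor $(1-\gamma h)^{2m}$, whereas the paper expands $|(\hat P_{k+1}-P_k)+P_k|^{2m}$ and recovers the dissipative term from the estimate $\mathbb{E}[P_k\cdot(\hat P_{k+1}-P_k)\mid P_k]\le -\gamma h|P_k|^2+Kh|P_k|$; the resulting recursions and constants differ only in harmless details.
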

\begin{proof} 
    The constant $K>0$ is changing from line to line. Using (\ref{eq:PhatP}), we
get (see the proof of Lemma~\ref{cld_lemma_4.2}):%
\begin{align*}
& \mathbb{E}|P_{k+1}|^{2m}=\mathbb{E}|\hat{P}_{k+1}|^{2m}\leq \mathbb{E}%
|P_{k}|^{2m}+m\mathbb{E}|P_{k}|^{2m-2}[2(P_{k}\cdot (\hat{P}%
_{k+1}-P_{k}))+(2m-1)|\hat{P}_{k+1}-P_{k}|^{2}] \\
& +K\sum_{l=3}^{2m}\mathbb{E}|P_{k}|^{2m-l}|\hat{P}_{k+1}-P_{k}|^{l},
\end{align*}%
where $K>0$ depends on $m$ only. It is not difficult to obtain%
\begin{eqnarray*}
\mathbb{E}[P_{k}\cdot (\hat{P}_{k+1}-P_{k})|P_{k}] &\leq &-\gamma
h|P_{k}|^{2}+Kh|P_{k}|,\,\,\, 
\mathbb{E}[|\hat{P}_{k+1}-P_{k}|^{2}|P_{k}] \leq Kh^{2}|P_{k}|^{2}+\frac{%
2\gamma }{\beta }h, \\
\mathbb{E}[|\hat{P}_{k+1}-P_{k}|^{l}|P_{k}] &\leq
&Kh^{l}|P_{k}|^{l}+Kh^{l/2},\;l\geq 3,
\end{eqnarray*}%
with $K>0$ independent of $h$ (it depends on $\max_{q\in \bar{G}}|\nabla_q
U(q)|,\ \gamma ,$ $\beta ,$ $l$ and moments of $\xi )$. Using the
generalized Young's inequality, i.e. $ ab  \leq  \epsilon^{i}\frac{a^{i}}{i} + \epsilon^{-j} \frac{b^{j}}{j} $; $a, b \geq 0$, $\epsilon>0$; $i,j >1$; $\frac{1}{i} + \frac{1}{j} =1$, with $a = |P_{k}|^{2m-1}$, $b = K$, $ i = \frac{2m}{(2m -1)}$, $ j = 2m$ and $\epsilon = \gamma^{\frac{2m-1}{2m}}$, we get  
\begin{eqnarray*}
&& 2m\mathbb{E}\left[ |P_{k}|^{2m-2}(P_{k}\cdot (\hat{P}_{k+1}-P_{k}))\right] 
\leq -2m\gamma h \mathbb{E}|P_{k}| ^{2m} + 2mh K\mathbb{E}(|P_{k}|^{2m-1}) \\ 
&& \leq -2m\gamma h \mathbb{E}|P_{k}| ^{2m} + (2m -1)\gamma h \mathbb{E}|P_{k}|^{2m} + h K^{2m} \left(\frac{1}{\gamma}\right)^{2m -1}
\\&&\leq -\gamma h\mathbb{E}|P_{k}|^{2m}  + h K^{2m} \left(\frac{1}{\gamma}\right)^{2m -1}.
\end{eqnarray*}
In the similar manner, we have
\begin{eqnarray*}
(2m-1)\mathbb{E}|P_{k}|^{2m-2}[|\hat{P}_{k+1}-P_{k}|^{2}] \leq Kh^{2}%
\mathbb{E}|P_{k}|^{2m}+\frac{\gamma h}{2}\mathbb{E}|P_{k}|^{2m} + Kh,
\end{eqnarray*}%
and similarly with some new $K>0$%
\begin{equation*}
K\mathbb{E}|P_{k}|^{2m-l}|\hat{P}_{k+1}-P_{k}|^{l}\leq \frac{\gamma h^{l/2}}{%
2m}\mathbb{E}|P_{k}|^{2m}+Kh^{l/2}+Kh^{l/2+m}|P_{k}|^{2m},\;l\geq 3.
\end{equation*}%
Thus, with a new $K>0$ again:
\begin{equation*}
\mathbb{E}|P_{k+1}|^{2m}\leq \mathbb{E}|P_{k}|^{2m}-\frac{\gamma h}{2}(1-%
\sqrt{h}-Kh)\mathbb{E}|P_{k}|^{2m}+Kh
\end{equation*}%
and, using the Gronwall lemma, we get 
\begin{equation*}
\mathbb{E}|P_{k}|^{2m}\leq \left( 1-\frac{\gamma h}{2}(1-\sqrt{h}-Kh)\right)
^{k}|P_{0}|^{2m}+\frac{2K}{\gamma (1-\sqrt{h}-Kh)}\left[ 1-\left( 1-\frac{\gamma h%
}{2}(1-\sqrt{h}-Kh)\right) ^{k}\right] .
\end{equation*}
\end{proof}

\begin{lemma}\label{lemma5.8}
Under Assumptions~\ref{cld_as:1}, \ref{cld_as:3}, \ref{as:5}-\ref{as:7}, the one-step error of scheme [PA$_c$] applied to (\ref{ecld}) is estimated as 
\begin{align*}
    &\big|\mathbb{E}[u(t_{k+1},Q_{k+1},P_{k+1}) - u(t_{k},Q_{k},P_{k})\mid Q_{k},P_{k}]\big| \leq C\big(h^{2}(1+|P_{k}|^{2m})e^{-\lambda(T - t_{k})} \\ & \;\;\;\; +  he^{-\lambda (T - t_k)}\mathbb{E}\big[\sum_{i=1}^{\kappa_k}(n_{G}(Q_{k+1,c,i})\cdot \tilde{P}_{k+1,i-1})(1+|P_{k+1}|^{2m})I_{G^{c}}(\hat{Q}_{k+1})\big|Q_{k},P_{k}\big]\big),
\end{align*}
where $\tilde{P}_{k+1, 0} = \hat{P}_{k+1}$, $\kappa_k = \kappa (Q_k, \hat{P}_{k+1}, h)$,  $u(t,q,p)$ is the solution of (\ref{eq2.1})-(\ref{eq2.3}) with $b(q,p) = - \nabla_q U(q) - \gamma p$ and $C,\lambda>0$  are constants independent of $h$, $T$ and $P_{k}$.
\end{lemma}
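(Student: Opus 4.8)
The plan is to follow the proof of Lemma~\ref{lemma4.4} essentially line by line; the only genuinely new feature is that every derivative of $u$ now carries the exponential weight $e^{-\lambda(T-t)}$ coming from the bound \eqref{eq:ergKolmest} in Assumption~\ref{as:7}, rather than merely the polynomial-in-$p$ growth of Assumption~\ref{cld_as:4a}. Since $t_{k+1}=t_k+h$ and $h$ is small, $e^{-\lambda(T-t_{k+1})}=e^{\lambda h}e^{-\lambda(T-t_k)}\le 2e^{-\lambda(T-t_k)}$, so every remainder term arising below is evaluated either at $t_k$, at $t_{k+1}$, or at a Taylor point in $[t_k,t_{k+1}]$, and can therefore be bounded by a common factor $Ce^{-\lambda(T-t_k)}$ with $C$ independent of $h$, $T$ and $k$; this is precisely the source of the weight on the right-hand side of the lemma.

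First I would split, exactly as in \eqref{eq:lem44_1},
\[
u(t_{k+1},Q_{k+1},P_{k+1})-u(t_k,Q_k,P_k)=A_1+A_2+A_3,
\]
with $A_1$ the position-update contribution on $\{I_G(\hat Q_{k+1})=1\}$, $A_2$ the collision contribution on $\{I_{\bar G^c}(\hat Q_{k+1})=1\}$, and $A_3=u(t_{k+1},Q_k,\hat P_{k+1})-u(t_k,Q_k,P_k)$. A second-order Taylor expansion in $q$ handles $A_1$, giving the principal term $h(\hat P_{k+1}\cdot\nabla_q u(t_{k+1},Q_k,\hat P_{k+1}))$ plus a remainder bounded by $Ch^2(1+|\hat P_{k+1}|^{2m})e^{-\lambda(T-t_{k+1})}$ via \eqref{eq:ergKolmest}. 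For $A_2$ I would re-run the computation of Lemma~\ref{bouncary_collision_error_lemma} keeping the exponential weight through the repeated use of the specular boundary condition $u(t_{k+1},Q_{c,i},\tilde P_{i})=u(t_{k+1},Q_{c,i},\tilde P_{i-1})$ and of Taylor's theorem, obtaining the principal term $h(\nabla_q u(t_{k+1},Q_k,\hat P_{k+1})\cdot\hat P_{k+1})$ plus a collision term $Che^{-\lambda(T-t_{k+1})}(1+|P_{k+1}|^{2m})\sum_{i=1}^{\kappa_k}(n_G(Q_{k+1,c,i})\cdot\tilde P_{k+1,i-1})$ and an $O(h^2)$-with-weight remainder. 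For $A_3$ the usual one-step Euler expansion for SDEs gives
\[
\mathbb{E}[A_3\mid Q_k,P_k]=h\Big(\partial_t u+(b_k\cdot\nabla_p u)+\tfrac{\sigma^2}{2}\Delta_p u\Big)(t_k,Q_k,P_k)+r,\qquad |r|\le Ch^2(1+|P_k|^{2m})e^{-\lambda(T-t_k)},
\]
where now $b_k=-\nabla_q U(Q_k)-\gamma P_k$ and $\sigma^2=2\gamma/\beta$.

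Adding $A_1+A_2$ and replacing $\hat P_{k+1}$ by $P_k$ and $t_{k+1}$ by $t_k$ in the principal terms (at the cost of a further $O(h^2)$-with-weight error, using $\mathbb{E}[\hat P_{k+1}\mid Q_k,P_k]=P_k+O(h)$ and $\mathbb{E}[|\hat P_{k+1}-P_k|^2\mid Q_k,P_k]=O(h)$), the surviving $O(h)$ contribution is $h\big((P_k\cdot\nabla_q u)+\partial_t u+(b_k\cdot\nabla_p u)+\tfrac{\sigma^2}{2}\Delta_p u\big)(t_k,Q_k,P_k)$, which vanishes because $u$ solves the backward Kolmogorov equation \eqref{eq2.1}. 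Taking conditional expectation and transferring powers of $|P_{k+1}|$ onto $|P_k|$ via $\mathbb{E}[|P_{k+1}|^{2m}\mid Q_k,P_k]\le C(1+|P_k|^{2m})$ — which follows, uniformly in $k$ and $h$, from the recursion established in the proof of Lemma~\ref{cld_lemma_5.6} — gives exactly the stated bound. The main obstacle, as in Lemma~\ref{lemma4.4}, is the bookkeeping inside $A_2$: one must carefully track the cumulative Jacobian-of-$\nabla_q u$ corrections generated by the momentum reflection at each of the $\kappa_k$ collisions, bound them by $C(1+|P_{k+1}|^{2m})(n_G(Q_{c,i})\cdot\tilde P_{i-1})$ using \eqref{cld_new_eq_3.16} together with \eqref{eq:ergKolmest}, and verify that the weight $e^{-\lambda(T-t_{k+1})}$ factors cleanly out of these sums; the measure-theoretic result of \cite{pulvirenti_elastic_collision_} and Proposition~\ref{prop:mult} ensure $\kappa_k<\infty$ almost surely and provide the integrability needed to take the conditional expectation.
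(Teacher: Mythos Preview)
Your proposal is correct and matches the paper's own approach exactly: the paper simply states that the proof follows the same steps as Lemma~\ref{lemma4.4}, with Assumption~\ref{as:7} replacing Assumption~\ref{cld_as:4a} so that every derivative bound in the Taylor expansions carries the factor $e^{-\lambda(T-t)}$. Your observation that $e^{-\lambda(T-t_{k+1})}\le 2e^{-\lambda(T-t_k)}$ for small $h$ is precisely the mechanism by which the weight is passed through uniformly.
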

The proof follows exactly the same steps as in the proof of Lemma~\ref{lemma4.4} with the modification that Assumption~\ref{as:7} is used for derivatives' decay to bound the terms in the Taylor expansion.

\begin{lemma} \label{ebl}
Let  Assumptions \ref{cld_as:1} and \ref{as:5} hold. For sufficiently small $h>0$ the following inequality holds for any integer $m \geq 1$:
\begin{equation}
    \mathbb{E}\bigg(\sum\limits_{k=0}^{N-1}\sum_{i=1}^{\kappa_k}e^{\lambda t_k}(n_{G}(Q_{k+1, c, i})\cdot \tilde{P}_{k+1, i-1})(1+|P_{k+1}|^{2m})I_{G^{c}}\big(\hat{Q}_{k+1}\big) \bigg)\leq Ce^{\lambda T}, \label{eq_ebl}
\end{equation}
where $\kappa_k := \kappa (Q_k, \hat{P}_{k+1}, h)$  and ${Q}_{k+1, c, i}$, ${Q}_{k+1}$, $\hat{P}_{k+1}$, 
$ \hat{Q}_{k+1} $, $k=0, \dots, N-1$, are according to the  Markov chain constructed by scheme [PA$_c$] with multi-collision  Algorithm~\ref{A$_c$ step with multiple},  and $C>0$ is a constant independent of $T$ and $h$.  
\end{lemma}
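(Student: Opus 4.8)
The plan is to mirror the boundary‑value‑problem argument behind Lemma~\ref{bl}, but with a Lyapunov function designed so that the resulting bound scales as $e^{\lambda T}$ with a prefactor independent of $T$. As in \eqref{cld_eq:5.1}--\eqref{cld_eq:5.2}, I would set up the discrete BVP for the Markov chain produced by [PA$_c$] applied to \eqref{ecld}, with source term
\[
g(t_{k},q,p)=e^{\lambda t_{k}}\,\mathbb{E}\Big[\textstyle\sum_{i=1}^{\kappa}\big(n_{G}(Q_{c,i})\cdot\tilde{P}_{i-1}\big)\big(1+|P_{1}|^{2m}\big)I_{G^{c}}(\hat{Q}_{1})\ \Big|\ Q_{k}=q,\,P_{k}=p\Big],
\]
where $(Q_{c,i},\tilde{P}_{i-1},\hat{Q}_{1},P_{1})$ refer to one step from $(q,p)$ and $\kappa=\kappa(q,\hat{P}_{1},h)$ (this $g$ is finite by Proposition~\ref{prop:mult}). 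By the representation \eqref{eq:5.3} and the tower property, the associated solution $v(0,Q_{0},P_{0})$ is exactly the left‑hand side of \eqref{eq_ebl}. It is then enough to produce $V$ on $[0,T]\times\bar{G}\times\mathbb{R}^{d}$ with $V(T,\cdot)\ge0$, with $-(\mathbb{T}V-V)\ge g$ on $[0,T-h]\times\bar{G}\times\mathbb{R}^{d}$, and with $V(0,Q_{0},P_{0})\le Ce^{\lambda T}$ for $C$ independent of $T$ and $h$; taking conditional expectations in $\mathbb{E}V(t_{k+1},Q_{k+1},P_{k+1})-V(t_{k},Q_{k},P_{k})=(\mathbb{T}V-V)(t_{k},Q_{k},P_{k})$, summing over $k$, and using $V(T,\cdot)\ge0$ yields $v(0,Q_{0},P_{0})\le V(0,Q_{0},P_{0})$.

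The crucial new ingredient is the dissipativity of \eqref{ecld}: the artificial factor $e^{K(T-t)}$ used in Lemma~\ref{bl} is replaced by the exponential contraction of the momentum moments. The proof of Lemma~\ref{cld_lemma_5.6} in fact gives, for $h$ small, $\mathbb{E}[|P_{k+1}|^{2\ell}\mid Q_{k},P_{k}]\le(1-ch)|P_{k}|^{2\ell}+Kh$ for every $\ell\ge1$, with $c=\gamma/3$ (say); and, shrinking $\lambda$ if necessary (legitimate, since \eqref{ergo} and the bound in Assumption~\ref{as:7} hold with any smaller constant), I may assume $\lambda<c$. With $B\in C^{3}(\bar{G})$ the same extension of $n_{G}(\cdot^{\pi})$ as in Lemma~\ref{bl}, I would take
\[
V(t,q,p)=D\big(e^{\lambda T}-e^{\lambda t}\big)+e^{\lambda t}\Big[A\big(1+|p|^{2m+2}\big)+\big(B(q)\cdot p\big)\big(1+|p|^{2m}\big)\Big],
\]
with constants $A,D>0$. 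For $A$ large the bracket is nonnegative (Young's inequality on $|p|(1+|p|^{2m})$), so $V\ge0$ and in particular $V(T,\cdot)\ge0$, while $V(0,Q_{0},P_{0})\le De^{\lambda T}+C(1+|P_{0}|^{2m+2})\le C'e^{\lambda T}$.

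For the supersolution inequality I would reuse, essentially verbatim, the collision expansions \eqref{cld_eqn_4.16}--\eqref{cld_eqn_4.32} from the proof of Lemma~\ref{bl}, which give (whether or not the step hits $\partial G$, thanks to the conservation \eqref{eq:PhatP})
\[
\mathbb{E}\big[(B(Q_{1})\cdot P_{1})(1+|P_{1}|^{2m})\mid q,p\big]=(B(q)\cdot p)(1+|p|^{2m})-2\,\mathbb{E}\big[\textstyle\sum_{i=1}^{\kappa}(\tilde{P}_{i-1}\cdot n_{G}(Q_{c,i}))(1+|P_{1}|^{2m})I_{G^{c}}(\hat{Q}_{1})\big]+r,
\]
with $|r|\le Ch(1+|p|^{2m+2})$. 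Combining this with the contraction of $\mathbb{E}|P_{1}|^{2m+2}$, writing $e^{\lambda t_{k+1}}=e^{\lambda t_{k}}(1+\lambda h+O(h^{2}))$, and using $\mathbb{T}[D(e^{\lambda T}-e^{\lambda t})]-D(e^{\lambda T}-e^{\lambda t})=-De^{\lambda t_{k}}(e^{\lambda h}-1)$, one collects the terms into
\[
(\mathbb{T}V-V)(t_{k},q,p)=e^{\lambda t_{k}}\Big[-2\,\mathbb{E}\big[\textstyle\sum_{i}(\tilde{P}_{i-1}\cdot n_{G}(Q_{c,i}))(1+|P_{1}|^{2m})I_{G^{c}}(\hat{Q}_{1})\big]+h\,\Lambda_{h}(p)\Big],
\]
where $\Lambda_{h}(p)=A(\lambda-c)|p|^{2m+2}+\lambda A+AK-D\lambda+C_{0}(1+|p|^{2m+2})+O(h)(1+|p|^{2m+2})$ and $C_{0}$ is an absolute constant (from $r$ and the other $O(h)$ errors) depending on neither $A$ nor $D$. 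Choosing $A$ so large that $A(c-\lambda)\ge C_{0}+1$ makes the $|p|^{2m+2}$‑part of $\Lambda_{h}$ negative for $h$ small, and then choosing $D$ so large that $D\lambda\ge\lambda A+AK+C_{0}$ forces $\Lambda_{h}(p)\le0$ for all $p$ and all sufficiently small $h$. Hence $(\mathbb{T}V-V)\le-2e^{\lambda t_{k}}\mathbb{E}[\sum_{i}(\cdots)I_{G^{c}}]\le-g$, the comparison step applies, and \eqref{eq_ebl} follows.

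The main obstacle is precisely this constant bookkeeping. In the finite‑horizon Lemma~\ref{bl} the bound is allowed to blow up with $T$, so an exponentially large budget $e^{K(T-t)}$ with $K$ arbitrarily large is available for free; here the prefactor of $e^{\lambda T}$ must stay bounded, which forces the dissipation of \eqref{ecld} (hence the requirement $\lambda<\gamma/3$) to absorb all the momentum‑dependent one‑step errors, and forces the separate, linear‑in‑$D$ term $D(e^{\lambda T}-e^{\lambda t})$ to absorb the unavoidable $O(h)$ noise injection per step at bounded momenta. Choosing $A$, then $D$, then the admissible range of $h$ so that every inequality holds uniformly in $T$ is the delicate point; the collision counting itself is inherited unchanged from Lemma~\ref{bl} and Proposition~\ref{prop:mult}.
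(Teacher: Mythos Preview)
Your argument is correct and follows the paper's proof closely: the same discrete boundary-value problem \eqref{cld_eq:5.1}--\eqref{cld_eq:5.2}, the same Lyapunov structure $V(t,q,p)=\text{const}\cdot(e^{\lambda T}-e^{\lambda t})+e^{\lambda t}\big[\text{momentum power}+(B(q)\cdot p)(1+|p|^{2m})\big]$, and verbatim reuse of the collision expansion \eqref{cld_eqn_4.23} to extract the normal-component sum.

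The one genuine technical difference is how you make the $|p|$-dependent part of $\mathbb{T}V-V$ nonpositive. The paper takes the momentum power to be $|p|^{2\ell}$ with $A=\max_{\bar G}|B|$ \emph{fixed} and $\ell\ge m+1$ \emph{free}; a direct one-step expansion (not the pre-packaged contraction of Lemma~\ref{cld_lemma_5.6}) then gives the coefficient $(-\ell\gamma+\lambda+1)$ in front of $h|p|^{2\ell}$, so choosing $\ell$ large handles \emph{any} $\lambda>0$. You instead fix $\ell=m+1$ and take $A$ large, which forces $\lambda<c$ (the rate from Lemma~\ref{cld_lemma_5.6}, of order $\gamma/2$) and hence your shrinking of $\lambda$. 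That shrinking is indeed harmless for Theorem~\ref{theorem4.2}, but the paper's choice proves the lemma for the given $\lambda$ without adjustment; if you want to avoid the caveat, simply replace $A(1+|p|^{2m+2})$ by $A|p|^{2\ell}$ and redo the one-step moment expansion directly rather than quoting Lemma~\ref{cld_lemma_5.6}.
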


\begin{proof}
If in (\ref{cld_eq:5.1})  $g(t_k,q,p)= \mathbb{E}\big[\sum_{i=1}^{\kappa(q,\hat{P}_1,h)}e^{\lambda t_{k}}(n_{G}(Q_{k+1,c,i})\cdot \tilde{P}_{k+1,i-1})(1+|P_{1}|^{2m})I_{G^{c}}(\hat{Q}_{1})|\;Q_k = q, P_k = p\big]$ then the solution of (\ref{cld_eq:5.1})-(\ref{cld_eq:5.2}) is 
\begin{align*}
   v&(t_{0},q,p) =  
   \mathbb{E}\bigg(\sum_{k=0}^{N-1} g(t_{k},Q_{k}, P_{k}) \big| Q_{0} = q , P_{0} = p\bigg)
   \\ & =\mathbb{E}\bigg(\sum\limits_{k=0}^{N-1}\sum_{i=1}^{\kappa_k}\mathbb{E}\Big(e^{\lambda t_{k}}(n_{G}(Q_{k+1,c,i})\cdot \tilde{P}_{k+1, i-1})(1+|P_{k+1}|^{2m})I_{G^{c}}\big(\hat{Q}_{k+1}\big)\big| Q_{k},P_{k}\Big) \Big| Q_0 = q, P_0 = p \bigg)  
   \\ &=\mathbb{E}\bigg(\sum\limits_{k=0}^{N-1}\sum_{i=1}^{\kappa_k}e^{\lambda t_{k}}\Big(n_{G}(Q_{k+1,c, i})\cdot \tilde{P}_{k+1,i-1}\Big)\big(1 + |P_{k+1}|^{2m}\big)I_{G^{c}}\big(\hat{Q}_{k+1}\big)\Big| Q_{0} = q, P_{0} = p \bigg),
\end{align*}
where $\kappa_k = \kappa (Q_k, \hat{P}_{k+1}, h)$. 
If we can find a function $V(t,q,p)$ with $g(t,q,p)$, satisfying (\ref{cld_eq:5.1})-(\ref{cld_eq:5.2}), such that 
\begin{equation*}
g(t,q,p) \geq \mathbb{E}\big[\sum_{i=1}^{\kappa(q,\hat{P}_1,h)}e^{\lambda t}(n_{G}(Q_{c,i})\cdot \tilde{P}_{i-1})(1+|P_{1}|^{2m})I_{G^{c}}(\hat{Q}_{1})|\; q, p\big],
\end{equation*}
then the inequality $v(t_{0},Q_{0},P_{0}) \leq V(t_{0},Q_{0},P_{0})$ holds. 

As in the proof of Lemma~\ref{bl},  we consider the boundary zone $G_{R_{0}}$. Introduce the function 
\[
V(t,q,p)=%
\begin{cases}
0, & (t,q,p)\in \{T\}\times \bar{G}\times \mathbb{R}^{d}, \\ 
Ae^{\lambda t}|p|^{2\ell }+K (e^{\lambda T}-e^{\lambda t}) &  \\ 
+(B(q)\cdot p)(1+|p|^{2m})e^{\lambda t}, & (t,q,p)\in \lbrack 0,T-h]\times 
\bar{G}\times \mathbb{R}^{d},%
\end{cases}%
\]%
where $A=\max_{q\in \bar{G}}|B(q)|$; integer $\ell \geq m+1$ will be chosen
later in the proof; $K>0$ is a constant, which value will also be chosen
later in the proof; and $B(q)$ is the same as in the proof of Lemma~\ref{bl}
(i.e., $B(q)=n(q^{\pi })$ for $q\in G_{R_{0}}$ and $q^{\pi }$ is projection
of $q$ on $\partial G$). Note that $V(t,q,p)\geq 0$.

Using Taylor's expansion, we get for any $l\geq 1$%
\begin{align*}
\mathbb{E}|P_{1}|^{2l} &=\mathbb{E}|\tilde{P}_{0}|^{2l} = \mathbb{E}|\hat{P}_{1}|^{2l}=|p|^{2l}-2lh\gamma
(p\cdot p)|p|^{2l-2} - 2lh(\nabla_q U(q) \cdot p)|p|^{2l-2}\\  & +\frac{2\gamma }{\beta }lh(2(l-1)+d)|p|^{2l-2}+r_{1},
\end{align*}%
$\allowbreak $where $r_{1}$ satisfies 
\begin{equation}
|r_{1}|\leq Cl^{4}h^{2}(1+|p|^{2l})  \label{eq:lem6r1}
\end{equation}%
with $C>0$ being independent of $h,$ $p$ and $l$. Expanding $%
e^{\lambda (t+h)}$, using the generalized Young inequality twice and then simplifying the expression, we obtain 
\begin{align}
&\mathbb{E}e^{\lambda (t+h)}|P_{1}|^{2\ell }-e^{\lambda t}|p|^{2\ell
} \label{eq:lem6_1}\\
&\leq e^{\lambda t}\left[ -\ell h\gamma |p|^{2\ell }+\ell h\gamma \left[ \frac{%
2\ell }{\beta (\ell -1)}(2(\ell -1)+d)\right] ^{\ell }+(\lambda+1) h|p|^{2\ell }%
+h (2 \ell |\nabla_q U(q)|)^{2l}
\right] +r_{2}, \nonumber 
\end{align}%
where $r_{2}$ satisfies (\ref{eq:lem6r1}) with $l=\ell $. 

From (\ref{cld_eqn_4.23}), we have 
\begin{align}
\mathbb{E}[(B(Q_{1})\cdot P_{1})(1+|P_{1}|^{2m})] & =(B(q)\cdot p)(1+|p|^{2m}) \nonumber \\    &  
- 2\mathbb{E} \sum_{i=1}^{\kappa}\big[ (\tilde{P}_{i-1} \cdot n_{G}(Q_{c, i}))(1 + |P_{1}|^{2m})I_{G^{c}}(\hat{Q}_{1}) \big] +r_{3},  \label{eq:lem6_2}
\end{align}%
where $r_{3}$ satisfies 
\begin{equation}
|r_{3}|\leq Ch(1+|p|^{2m+2})  \label{eq:lem6r2}
\end{equation}%
with $C>0$ being independent of $h$ and $p$ (it depends on $m)$.

From (\ref{eq:lem6_1}) and (\ref{eq:lem6_2}) and using $e^{\lambda
(t+h)}\geq e^{\lambda t}+\lambda he^{\lambda t},$ it follows that 
\begin{eqnarray*}
&&\mathbb{T}V-V=\mathbb{E}Ae^{\lambda (t+h)}|P_{1}|^{2\ell }+K(e^{\lambda T}-e^{\lambda
(t+h)})+\mathbb{E}[(B(Q_{1})\cdot P_{1})(1+|P_{1}|^{2m})]e^{\lambda (t+h)} \\
&&-Ae^{\lambda t}|p|^{2\ell }-K(e^{\lambda T}-e^{\lambda t})-(B(q)\cdot
p)(1+|p|^{2m})e^{\lambda t} \\
&\leq &Ae^{\lambda t}\left[ -\ell h\gamma |p|^{2\ell }+\ell h\gamma \left[ \frac{%
2\ell }{\beta (\ell -1)}(2(\ell -1)+d)\right] ^{\ell }+(\lambda +1) h|p|^{2\ell }%
+h (2 \ell |\nabla_q U(q)|)^{2l}
\right] \\
&&-Ke^{\lambda t}\lambda h - 2e^{\lambda t}\mathbb{E}\big[ \sum_{i=1}^{\kappa} (\tilde{P}_{i-1} \cdot n_{G}(Q_{c, i}))(1 + |P_{1}|^{2m})I_{G^{c}}(\hat{Q}_{1}) \big]+e^{\lambda t}(r_{2}+r_{4})(1 + r_5),
\end{eqnarray*}%
where $r_{4}$ satisfies (\ref{eq:lem6r2}) and $r_5 \leq C h^2 $ with $C>0$ being independent of $m$, $\ell$ and $K$. 
Choosing a sufficiently 
large $\ell $ and, depending on the choice of $\ell ,$ choosing a sufficiently small $h>0$ and large $K$, we get $g(t,q,p)=-(\mathbb{T}V-V)\geq \mathbb{E}\big[\sum_{i=1}^{\kappa}%
e^{\lambda t}(n_{G}(Q_{c,i})\cdot \tilde{P}_{i-1})(1+|P_{1}|^{2m})I_{G^{c}}(\hat{Q}%
_{1})\big]$.
Hence, the lemma is proved.

\end{proof}

\begin{proof}[\textbf{Proof of Theorem~\ref{theorem4.2}}] Using Assumption~\ref{as:7} and Lemma~\ref{lemma5.8}, we have 
\begin{align*}
    &|\mathbb{E}\varphi(Q_{N},P_{N}) - \bar{\varphi}| \leq |\mathbb{E}(\varphi(Q(T),P(T))) - \bar{\varphi}| +  |\mathbb{E}(\varphi(Q_{N},P_{N})) - \mathbb{E}(\varphi(Q(T),P(T)))| \\
     &
   \leq C\exp(-\lambda T)+ \Big|\mathbb{E}\Big(\sum\limits_{k=0}^{N-1}\mathbb{E}\big(u(t_{k+1},Q_{k+1},P_{k+1}) -u(t_{k},Q_{k},P_{k}) | Q_{k},P_{k}\big)\Big)\Big|   \\ 
   & \leq C\exp(-\lambda T)+ Ch^{2}\mathbb{E}(|P_{k}|^{2m})\sum\limits_{k=0}^{N-1}e^{-\lambda (T - t_{k})} \\
   & + Che^{-\lambda T}\mathbb{E}\Big(\sum\limits_{k=0}^{N-1}\sum_{i=1}^{\kappa_k}e^{\lambda t_{k}}(n_{G}({Q}_{k+1,c,i})\cdot\hat{P}_{k+1,i-1})(1+|P_{k+1}|^{2m})I_{G^{c}}(\hat{Q}_{k+1}) \Big),
\end{align*}
which together with Lemmas~\ref{cld_lemma_5.6} and \ref{ebl} gives the desired result.
\end{proof}

\subsection{Order of consistency of [OBA$_{c}$BO]}\label{sec:2ndorder}

As mentioned previously, splitting techniques, presented in Subsection~\ref{sec:2nd},  composed of collisional drift, gradient forces, and stochastic impulses achieve second-order convergence. From numerical analysis experience it is expected that, due to the additional noise term, stochastic schemes generally would maintain equivalent or reduced convergence rates compared to their deterministic counterparts. Consequently, splitting schemes with collision events would typically be expected to achieve first-order convergence at best. This section is devoted to investigation of the above mentioned result of second-order convergence.

\subsubsection{Preliminary discussion }\label{prlim_dis_sec}

Let us first look at the error expansion of one-step of [BA$_c$B], i.e. the vanilla Verlet method having elastic collisions  with the boundary $\partial G$. 
For simplicity, we consider $G = (0, \infty)$ with boundary $\partial G = \{0\}$. Setting $\gamma = 0$ in equation \eqref{ecld} gives us the deterministic collisional dynamics. Let the dynamics start at $t = 0$ with initial condition $(q,p) \in G \times \mathbb{R}  \setminus \{0\}$ and evolve for time $h$, with a collision occurring at time $\tau $. Using Taylor's theorem, we obtain:
\begin{align}
    Q(h) &= q + (2\tau - h)p - \frac{1}{2}(h - 2\tau)^2 \frac{\partial U}{\partial q}(q) + \mathcal{O}(h^3), \label{cld_neweq_4.44}\\
    P(h) &= - p + (2 \tau - h) \frac{\partial U}{\partial q}(q) + \frac{1}{2} (h -2 \tau)^2 \frac{\partial^2 U}{\partial q^2}(q)p + \mathcal{O}(h^3). \label{cld_neweq_4.45}
\end{align}
Applying  Taylor's theorem to the one-step [BA$_c$B] update $(Q_1, P_1)$ from the initial conditions $(q,p)$, we get 
\begin{align}
    Q_1 &= q + (2\tau - h) p -  (2 \tau - h)\frac{h}{2}\frac{\partial U}{\partial q}(q), \label{cld_neweq_4.46}\\
P_1 & =  -p - \frac{h}{2}(-h + 2 \tau) \frac{\partial^2 U}{\partial q^2}(q) p + \mathcal{O}(h^3). \label{cld_neweq_4.47}
\end{align}
It is clear from \eqref{cld_neweq_4.44}-\eqref{cld_neweq_4.47} that the error of the Verlet method [BA$_c$B] in the collisional step is of order $\mathcal{O}(h)$:
\begin{align}\label{eq:ham3}
    |Q_1 - Q(h)| + |P_1 - P(h)| = \mathcal{O}(h).
\end{align}
At the same time, we see from \eqref{cld_neweq_4.44}-\eqref{cld_neweq_4.47} that if the collision occurs in the middle of time-step, 
\begin{align} \label{cld_new_eq_4.49}
    \tau = h/2,
\end{align}
then the optimal, i.e. $\mathcal{O}(h^2)$, local order in the collisional step would be observed
(see also \cite{BrianBen2000,leimkuhler_mathews_15}). If there are a finite number of collisions (i.e., independent of $h$) and they happen at the middle of the step as above, then the global convergence of the collisional Verlet scheme would be optimal, i.e. $\mathcal{O}(h^2)$. The scenario of middle step collisions mentioned above  cannot be expected in the deterministic setting (see detailed discussion in  \cite[pp. 133-134]{leimkuhler_mathews_15}). To design a second order method, it is necessary to solve algebraic equations \cite{BrianBen2000,leimkuhler_mathews_15} to determine the time of collision in the numerical discretization.

The above discussion has demonstrated that $\tau$ has significant impact on the convergence order of collisional schemes. 

For further intuition, let us computationally analyze statistics of $\tau$ corresponding to the first collision of the discrete dynamics of [OBA$_c$BO] on a particular example.  
We consider the Langevin dynamics \eqref{ecld} evolving in the domain $G = (0, \infty)$ under the quadratic potential $U(q) = q^2/2$. We apply [OBA$_c$BO] scheme with $h = 0.01$ and focus on the first-passage event when the discrete time dynamics reach the boundary $\{0\}$. During the specular reflection (A$_c$) step of duration $h = 0.01$, we denote the collision time as $\tau_1$, where the subscript highlights the fact that it is related to first passage time. 
In Figure~\ref{cld_figure_firsttau_1_histogram}, the histogram of collision times $\tau_1$ illustrates the empirical distribution, based on $10^6$ trajectories of [OBA$_c$BO]. 

\begin{figure}[htbp]
    \centering
    \includegraphics[width=1\linewidth, height=0.15\textheight]{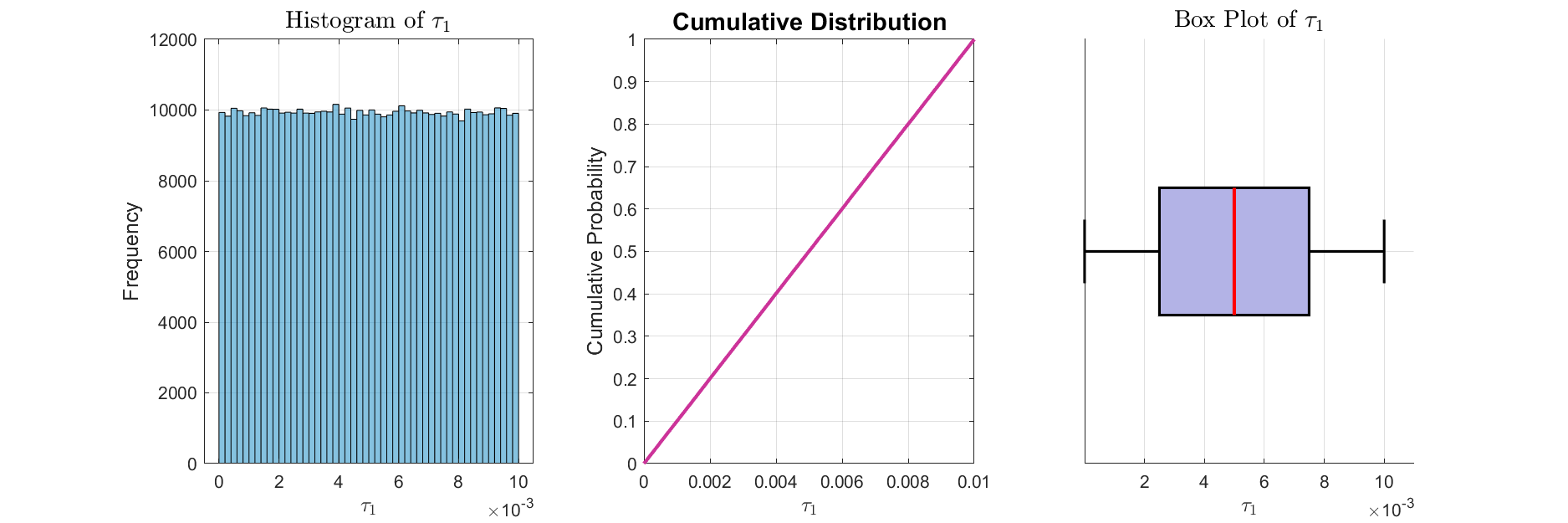}
    \caption{Plots showing the distribution of $\tau_1$. }
    \label{cld_figure_firsttau_1_histogram}
\end{figure}

From the error expansions in \eqref{cld_neweq_4.44}-\eqref{cld_neweq_4.47}, two statistics related to $\tau_1$ are of particular significance:
\begin{align*}
    \Lambda_1 = \mathbb{E}(\tau_1) - h/2 \,\, \text{ and } \,\,
    \Lambda_2 = \mathbb{E}(\tau_1/h)^2.
\end{align*}
We denote the corresponding Monte Carlo estimators as $\hat{\Lambda}_1$ and $\hat{\Lambda}_2$. Our numerical experiments, for the quadratic potential case, show that $\hat{\Lambda}_1$ decreases with order $\mathcal{O}(h^2)$ as $h $ decreases (see the left plot in Fig.~\ref{cld_figure_first_tau_1}), while $\hat{\Lambda}_2$ remains approximately constant as $h$ decreases (see the right plot in Fig.~\ref{cld_figure_first_tau_1}). 
Then one can infer that the second-order convergence of [OBA$_c$BO] scheme is a consequence of the fact that on average the discrete dynamics collides with boundary at the middle of time-step $h$ with second-order error, i.e., 
\begin{align*}
\mathbb{E} \tau = h/2 + \mathcal{O}(h^2). %
\end{align*}
We observed the same phenomenon in our experiments presented in Section~\ref{sec:tests} for different symmetric compositions of [A$_c$, B, O] splitting. 

\begin{figure}[htbp]
    \centering
    \includegraphics[width=1\linewidth, height=0.20\textheight]{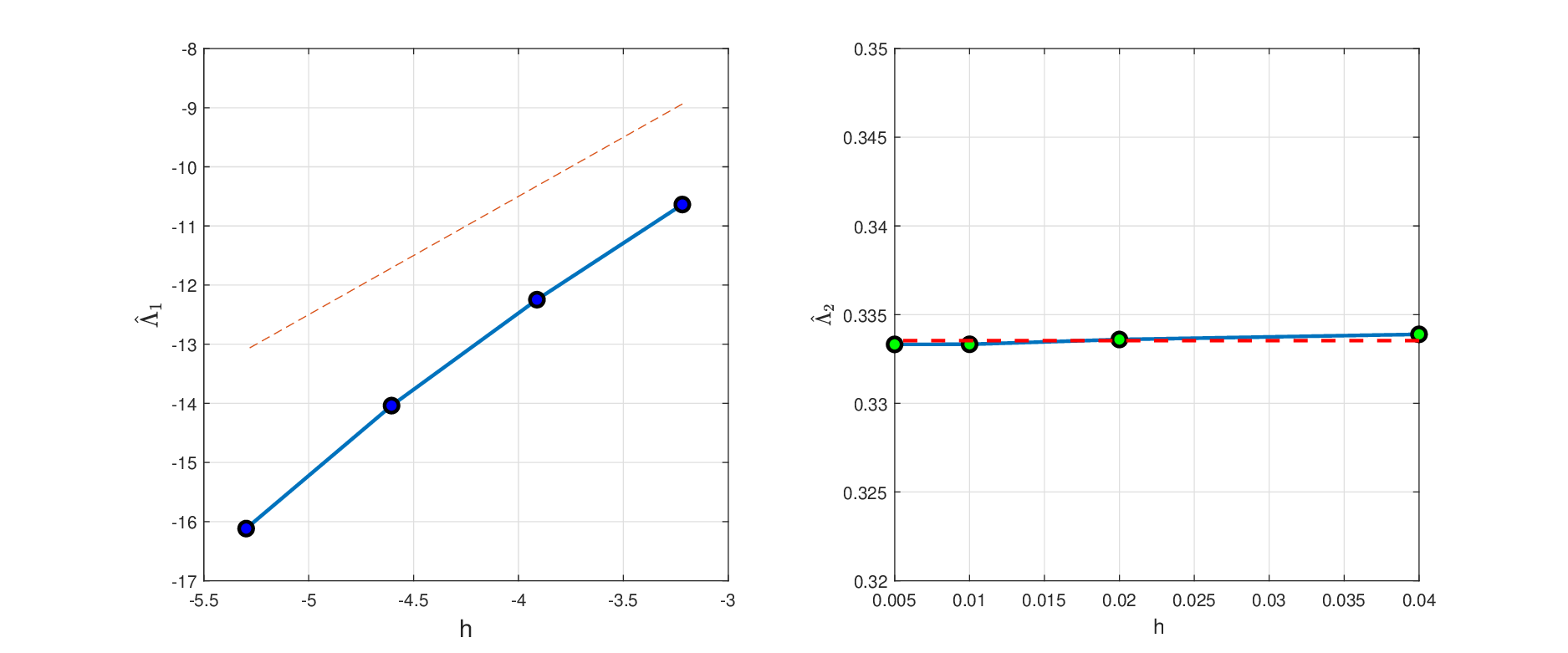}
    \caption{Left plot shows second-order in $h$ decrease of the statistics $\hat{\Lambda}_1$ (blue line); red line gives the second order slope. Right plot compares the statistics $\hat{\Lambda}_2$ (green line) vs the constant dashed red line, i.e. it shows that $\hat{\Lambda}_2$ remains almost constant.}  
    \label{cld_figure_first_tau_1}
\end{figure}

These experimental observations about $\tau_1$ can be explained as follows. Make the natural assumption that $\tau_1 $ has a normalized density $p(t)$
given by
\[
p(t)=\frac{\rho(t)}{\int_0^h \rho(s) ds}, \,\, t \in [ 0,h],
\]
with $\rho(s) >0$, $ s \in [ 0,h]$, being continuously differentiable.
Then, by Taylor's theorem, we have 
\begin{align}
 \mathbb{E} \tau  = \frac{\int_0^h t \rho (t)dt}{\int_0^h \rho (t) dt} = \frac{h}{2} + \mathcal{O}(h^2). 
 \label{cld_new_eq_4.52}
\end{align}
Note that the above expansion is true for any positive differentiable $p(t)$ -- roughly speaking, the main idea of the one-step lemma for [OBA$_c$BO] of the next subsection rests on this fact. 

We also expect that if we randomize the initial condition of otherwise deterministic Hamiltonian dynamics with elastic collisions, we should see the same effect as for Langevin equations: geometric integrators such as [BA$_c$B] are of 2nd order.
Indeed, our numerical illustration in Appendix~\ref{sec:colisHD} indicates that this supposition is plausible.
We leave a study of convergence rates for geometric integrators in random or chaotic Hamiltonian dynamics to possible future work.

\subsubsection{One-step lemma}

The relation \eqref{cld_new_eq_4.52} is the essence of the next lemma for [OBA$_c$BO] which provides theoretical justification of second order-convergence of  [OBA$_c$BO] when $G$ is half-space. We will not pursue the extension of lemma for other splitting schemes (e.g. [BA$_c$OA$_c$B]),  as that would be expected to follow a similar line of reasoning, and we leave the one-step analysis in the case of curved domains for future work. 
Recall (see Section~\ref{sec:mult}) that in the case of half-space multi-collisions cannot occur for any of our schemes. 

Consider $G=(0,\infty )\times \mathbb{R}^{d-1}.$ In this case the one step of scheme
[OBA$_{c}$BO] from Section~\ref{sec:2nd} takes the form:%
\begin{eqnarray}
&&\mathcal{P}_{1} =P_0e^{-\gamma h/2}+\sqrt{\frac{2\gamma}{ \beta}(1-%
\mathrm{e}^{-\gamma h})}\xi, \,\,
\mathcal{P}_{2} =\mathcal{P}_{1}-\frac{h}{2}\nabla_q U(Q_0), \,\,
\mathcal{Q} =Q_0+h\mathcal{P}_{2}; \label{eq:halfspace}\\
&&\textbf{If }\mathcal{Q}^{1} <0,\ \ \tau =-\frac{Q_0^{1}}{\mathcal{P}_{2}^{1}}%
,\ \mathcal{P}_{3}^{1}=-\mathcal{P}_{2}^{1},\ \mathcal{Q}_{c}^{1}=0=Q_0^{1}-%
\tau \mathcal{P}_{3}^{1}=Q_0^{1}+\tau \mathcal{P}_{2}^{1},\  \\
&&\mathcal{Q}_{c}^{i} =Q_0^{i}+\tau \mathcal{P}_{3}^{i}=Q_0^{i}+\tau \mathcal{P}%
_{2}^{i}\ ,\ i\neq 1; \,\,
Q^{1} =(h-\tau )\mathcal{P}_{3}^{1}=\left( h+\frac{Q_0^{1}}{\mathcal{P}%
_{3}^{1}}\right) \mathcal{P}_{3}^{1},\  \\
&&Q^{i} =\mathcal{Q}_{c}^{i}\mathcal{+}(h-\tau )\mathcal{P}%
_{3}^{i}=Q_0^{i}+h\mathcal{P}_{2}^{i},\ i\neq 1; \\
&&\textbf{else }Q =\mathcal{Q},\ \mathcal{P}_{3}=\mathcal{P}_{2};\\
&&\mathcal{P}_{4} =\mathcal{P}_{3}-\frac{h}{2}\nabla_q U(Q), \;\;
P =\mathcal{P}_{4}e^{-\gamma h/2}+\sqrt{\frac{2\gamma}{\beta }(1-%
\mathrm{e}^{-\gamma h})}\zeta.
\end{eqnarray}%
Here the components $\xi^i$ of $\xi$ and $\zeta^i$ of $\zeta$ are i.i.d. with the standard normal distribution and $\mathcal{Q}^{i}$, $i= 1,\dots, d$, denotes $i-$th component of $\mathcal{Q}$ and likewise for $q$ and $\mathcal{P}_{j}$, $j= 2, 3$.

In this subsection we need the following assumptions. The potential $U(q)$
satisfies Assumption~\ref{as:5}, and $\nabla_q U(q)$ is globally Lipschitz and
derivatives of $\nabla_q U(q)$ are bounded. To ensure ergodicity (see \cite{MSH02} and \cite{a22}[Section 9.1.5]), we need dissipativity of the extended generator of the Markov process $(Q(t), P(t))$ which can be guaranteed by assuming that there exists an $\alpha_1 >0$ and $0<\alpha_2 <1$ such that %
\begin{equation}
\frac{1}{2}(\nabla_q U(q),q)\geq \alpha_2 U(q)+\gamma ^{2}\frac{\alpha_2 (2-\alpha_2 )}{8(1-\alpha_2)}|q|^2+ \alpha_1.  \label{eq:ergunb}
\end{equation}
Further, we assume that the function $\varphi (q,p)\in C^{6,6}(\bar{G}\times 
\mathbb{R}^{d})$ and all its existing derivatives have growth at most
polynomial in $q$ and $p$. As usual for proofs of higher order numerical methods, we also require  that the solution of the backward Kolmogorov equation (\ref%
{eq2.1})-(\ref{eq2.3}) with $b(q,p)=-\nabla_q U(q)-\gamma p$ is sufficiently smooth and, in particular, satisfies the
condition that there are positive constants $C$ and $\lambda $ 
independent of $T$ such that the following bound holds for some $%
m_{1},m_{2}\geq 1$ (cf. (\ref%
{eq:ergKolmest}) in Assumption~\ref{as:7}): 
\begin{equation*}
\sum\limits_{l=1}^{3}\sum\limits_{i+|j|=l}|D_{t}^{i}D_{q}^{j}u(t,q,p)|+\sum%
\limits_{l=1}^{6}\sum\limits_{|j|=l}|D_{p}^{j}u(t,q,p)|\leq
C(1+|q|^{2m_{1}}+|p|^{2m_{2}})e^{-\lambda (T-t)},
\end{equation*}%
where $C>0$ is independent of $T,$ $q$ and $p$.

By standard means it is not difficult to establish that under the stated conditions the moments of $Q_k$ and $P_k$ from scheme [OBA$_c$BO] are bounded uniformly in time (cf. Lemma~\ref{cld_lemma_5.6}). 

\begin{lemma} \label{lem:just2nd}
Assume that the above conditions hold and that $(Q_0,P_0)$ is random with a density so that all the required moments with respect to this density exist, then the one-step error of scheme [OBA$_c$BO] has
the following estimate 
\begin{equation}\label{eq:lem10}
|\mathbb E\left[ u(t+h,Q,P)-u(t,Q_0,P_0)\right] |\leq Ce^{-\lambda (T-t)}
h^{3} ,
\end{equation}
where $C>0$ is independent of $T,$ $t,$ and $h$.
\end{lemma}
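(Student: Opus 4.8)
\textbf{Proof strategy for Lemma~\ref{lem:just2nd}.}
The plan is to compare the one step of [OBA$_c$BO] in \eqref{eq:halfspace} with the exact flow of \eqref{ecld} over $[t,t+h]$ through Taylor expansion, exploiting the backward Kolmogorov equation \eqref{eq2.1}--\eqref{eq2.3}. First observe that, since $u$ satisfies the PDE \eqref{eq2.1} in the interior and the specular boundary condition \eqref{eq2.3} on $\Sigma_T^+$, the process $s\mapsto u(s,Q(s),P(s))$ built from a solution of \eqref{ecld} is continuous through collisions (the momentum jump $-2(P(\tau_i^-)\cdot n_G)n_G$ is exactly annihilated by \eqref{eq2.3}) and is a martingale on $[t,t+h]$; hence $\mathbb E[u(t+h,Q(t+h),P(t+h))\mid Q_0,P_0]=u(t,Q_0,P_0)$, and it suffices to bound $\big|\mathbb E[u(t+h,Q,P)-u(t+h,Q(t+h),P(t+h))]\big|$ with both processes started from the random $(Q_0,P_0)$. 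I would split the step according to whether a collision with $\{q^1=0\}$ occurs. On the no-collision event the scheme is exactly the symmetric [OBABO] splitting of the unconfined Langevin dynamics, whose one-step weak error is $\mathcal O(h^3)$ by the standard second-order local analysis (combine the PDE with Taylor expansions carried to order $h^2$, cf.\ \cite{leimkuhler_mathews_15,milstein2021stochastic}); the exact flow contributes the same leading terms, and the factor $e^{-\lambda(T-t)}$ is inherited from the assumed decay bounds on the derivatives of $u$. So this part is $\le C e^{-\lambda(T-t)}h^3$.

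On the collision event I would Taylor expand both the scheme output $(Q,P)$ from \eqref{eq:halfspace}, with collision time $\tau=-Q_0^1/\mathcal P_2^1$, and the exact output $(Q(t+h),P(t+h))$, with exact collision time $\tau^\ast$, around $(t,Q_0,P_0)$, keeping all terms through order $h^2$. This is the refinement of the Hamiltonian computation \eqref{cld_neweq_4.44}--\eqref{cld_neweq_4.47} with the O- and B-substeps and the noise reinstated. Using the boundary condition \eqref{eq2.3} to identify $u$ evaluated at the pre- and post-reflection momenta at the collision point, and cancelling the reflected leading terms, the quantity $u(t+h,Q,P)-u(t+h,Q(t+h),P(t+h))$ reduces to a sum of: (i) terms of the form $(2\tau-h)\,\Phi(Q_0,P_0,\xi,\zeta)$ and $(2\tau^\ast-h)\,\widetilde\Phi(Q_0,P_0,\xi,\zeta)$ with $\Phi,\widetilde\Phi$ smooth of polynomial growth (built from first and second derivatives of $u$, from $\nabla_q U$ and its derivatives, and from the O-step coefficients); and (ii) remainder terms bounded by $C h^2$ in magnitude on the collision event (multiples of $(h-2\tau)^2$, $(h-2\tau^\ast)^2$ and $h(2\tau-h)$), each multiplied by a smooth factor of polynomial growth. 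One also has to control the symmetric difference between the two collision events, where the two expansions do not both apply: there the one-step difference is still $\mathcal O(h)$ while the probability of that set is $\mathcal O(h^2)$ (the thresholds defining ``scheme collides'' and ``exact collides'' differ by $\mathcal O(h^2)$ in $Q_0^1$, which has a bounded density near $0$), so its contribution is $\mathcal O(h^3)$.

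The key estimate is that the type-(i) terms integrate to $\mathcal O(h^3)$, which is the quantitative form of the ``midpoint collision'' heuristic \eqref{cld_new_eq_4.52}. Condition on $\mathcal F:=\sigma\big(\xi,\zeta,P_0,Q_0^2,\dots,Q_0^d\big)$; then $\mathcal P_2^1$ is $\mathcal F$-measurable, a collision occurs iff $\mathcal P_2^1<0$ and $Q_0^1\in(0,h|\mathcal P_2^1|)$, and on this event $\tau=Q_0^1/|\mathcal P_2^1|$ is an affine function of $Q_0^1$. Since the conditional density of $Q_0^1$ given $\mathcal F$ is smooth, a first-order Taylor expansion of that density around $Q_0^1=0$, together with the same expansion of the smooth factor $g$, gives
\begin{equation*}
\mathbb E\!\left[(2\tau-h)\,g(Q_0,P_0,\xi,\zeta)\,\mathbbm 1_{\{\mathrm{coll}\}}\mid\mathcal F\right]=\mathcal O\!\big(h^{3}|\mathcal P_2^1|^{2}\big),\qquad
\mathbb P\!\left(\mathrm{coll}\mid\mathcal F\right)=\mathcal O\!\big(h\,|\mathcal P_2^1|\big),
\end{equation*}
the first identity because the leading (constant-density) part of the $\tau$-density on $(0,h)$ is uniform, hence symmetric about $h/2$, and annihilates the $\mathcal O(h)$ contribution, leaving the $\mathcal O(h)$ correction to the density times an extra factor $Q_0^1=\mathcal O(h|\mathcal P_2^1|)$; the same computation applies verbatim to $\tau^\ast$ since $\tau^\ast=-Q_0^1/P_0^1+\mathcal O(h^2)$ is also affine in $Q_0^1$ up to $\mathcal O(h^2)$. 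Integrating over $\mathcal F$, the moment bounds for $(Q_0,P_0)$ and for $\mathcal P_2^1$ (uniform in $h$ since $P_0$ has all moments and $\xi^1$ is Gaussian) together with the integrability of the initial density keep this $\mathcal O(h^3)$. For the type-(ii) remainders one only needs $|h-2\tau|\le h$, $|2\tau-h|\le h$ and $\mathbb E[\mathbbm 1_{\{\mathrm{coll}\}}\mid\mathcal F]=\mathcal O(h)$, which yields $\mathcal O(h^2)\cdot\mathcal O(h)=\mathcal O(h^3)$. Collecting the no-collision and collision contributions and inserting the assumed $e^{-\lambda(T-t)}$ bound on the derivatives of $u$ gives \eqref{eq:lem10}. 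The main obstacle is the collision case: organizing the many Taylor terms so that every genuine $\mathcal O(h)$ term is displayed as $(2\tau-h)$ or $(2\tau^\ast-h)$ times a smooth polynomially growing factor, and then making the conditioning argument fully rigorous uniformly in $t\le T$ — in particular handling the regime where $|\mathcal P_2^1|$ is itself small (so that $Q_0^1=\mathcal O(h^{3/2})$ on the collision event) and verifying that the smoothness of the initial density and the polynomial growth of all factors suffice to push the whole error through at order $h^3$.
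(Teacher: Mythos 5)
Your central cancellation is exactly the one the paper uses: on the collision event the dangerous $\mathcal{O}(h)$ terms carry the factor $(2\tau-h)$ (equivalently $(\tfrac h2-\tau)$ in the paper's notation), and conditioning on everything except $Q_0^1$, writing $\tau=Q_0^1/|\mathcal{P}_2^1|$ as an affine function of $Q_0^1$ on $\{Q_0^1\in(0,h|\mathcal{P}_2^1|)\}$, and Taylor-expanding the smooth conditional density of $Q_0^1$ at the boundary kills the leading term and leaves $\mathcal{O}(h^3)$. This is precisely the paper's computation of $R_1-R_2$ via $\int_0^{p^1h}q^1\rho\,dq^1=\tfrac12(p^1h)^2\rho(0,\dots)+\mathcal{O}(h^3)$, just organized through conditional expectations instead of the explicit joint density of $(Q_0,\mathcal{P}_3)$. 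The no-collision part, the use of the specular boundary condition to identify $u$ at the pre- and post-reflection momenta, and the $\mathcal{O}(h^2)\cdot\mathbb{P}(\text{coll})=\mathcal{O}(h^3)$ bound for the quadratic remainders all match the paper.

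The genuine problem is your outer decomposition. The paper never compares the scheme with the exact confined flow: it telescopes $u$ through the substeps of [OBA$_c$BO] and invokes the Kolmogorov PDE once, so the exact collision time never appears. You instead replace $u(t,Q_0,P_0)$ by $\mathbb{E}[u(t+h,Q(t+h),P(t+h))]$ and expand the exact trajectory, which forces you to control $\tau^\ast$ and the mismatch between the discrete and exact collision events. Your quantitative claims there do not hold as stated: because the momentum carries a Brownian increment over the step, $\tau^\ast=-Q_0^1/P_0^1+\mathcal{O}(h^{3/2})$ pathwise (not $\mathcal{O}(h^2)$), and the two collision thresholds for $Q_0^1$ differ by a mean-zero random quantity of size $\mathcal{O}(h^{3/2})$ (the scheme's threshold involves $h\Gamma\xi^1$ while the exact one involves $\sigma\int_0^h W^1(r)\,dr$, and these do not cancel pathwise). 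Hence the symmetric difference of the two collision events has probability $\mathcal{O}(h^{3/2})$, not $\mathcal{O}(h^2)$, and your bound $\mathcal{O}(h)\cdot\mathcal{O}(h^2)$ for its contribution becomes only $\mathcal{O}(h)\cdot\mathcal{O}(h^{3/2})=\mathcal{O}(h^{5/2})$, which is not enough. The estimate can be repaired — on that event both positions lie within $\mathcal{O}(h^{3/2})$ of $\partial G$, so the specular boundary condition plus Lipschitz continuity of $u$ in $q^1$ makes the integrand itself $\mathcal{O}(h^{3/2})$ there, restoring $\mathcal{O}(h^3)$ — but this extra argument is missing, and the cleaner fix is to drop the comparison with the exact flow altogether and expand the one-step error of the scheme directly against the PDE, as the paper does.
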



Since the one-step error is ${\cal O}(h^3)$, the global weak error of  [OBA$_c$BO] scheme is ${\cal O}(h^2)$.
We remark that existence of a smooth density for [OBA$_c$BO] scheme is proved in Appendix~\ref{sec:density}.

\begin{proof}
We write the one step error as
\begin{eqnarray*}
&&R :=\E\left[ u(t+h,Q,P)-u(t,Q_0,P_0)\right]  \\
&&=\E\left[ u(t+h,Q,P)-u(t+h/2,Q,\mathcal{P}_{3})\right]  
+\E\left[ u(t+h/2,Q,\mathcal{P}_{3})-u(t+h/2,\mathcal{Q}_{c},\mathcal{P}%
_{3})\right] I_{G^{c}}(\mathcal{Q}) \\
&&+\E\left[ u(t+h/2,\mathcal{Q}_{c},\mathcal{P}_{2})-u(t+h/2,Q_0,\mathcal{P}%
_{2})\right] I_{G^{c}}(\mathcal{Q}) \\
&&+\E\left[ u(t+h/2,Q,\mathcal{P}_{2})-u(t+h/2,Q_0,\mathcal{P}_{2})\right]
I_{G}(\mathcal{Q}) 
+\E\left[ u(t+h/2,Q_0,\mathcal{P}_{2})-u(t+h/2,Q_0,\mathcal{P}_{1})\right]  \\
&&+\E\left[ u(t+h/2,Q_0,\mathcal{P}_{1})-u(t+h/2,Q_0,P_0)\right] +\left[
u(t+h/2,Q_0,P_0)-u(t,Q_0,P_0)\right] .
\end{eqnarray*}%
Appropriately applying Taylor's expansion to each of the brackets in the above re-writing of $R$, we arrive at
\begin{eqnarray*}
R &=&\E\left[ (\frac{h}{2}-\tau )\mathcal{P}_{3}\cdot \nabla _{q}u(t+h/2,\mathcal{Q}_{c},%
\mathcal{P}_{3})I_{G^{c}}(\mathcal{Q})\right] +\E\left[ (\tau
-\frac{h}{2})\mathcal{P}_{2}\cdot \nabla _{q}u(t+h/2,Q_0,\mathcal{P}%
_{2})I_{G^{c}}(\mathcal{Q})\right]  \\
&&+r_{1}+r_{2},
\end{eqnarray*}%
where $|r_{1}|\leq Ce^{-\lambda (T-t)}h^{3}$
and $|r_{2}|\leq Ce^{-\lambda (T-t)}h^{2} \E \left[ I_{G^{c}}(\mathcal{Q})%
\right]$. Since $\mathbb E\left[ I_{G^{c}}(\mathcal{Q})\right]={\cal O}(h)$, 
$|r_{2}|\leq Ce^{-\lambda (T-t)}h^{3}$.

Introduce $\pi$ so that $\pi \mathcal{P}_{3}=\mathcal{P}_{2},$ i.e., $\pi \mathcal{P}_{3}=(-%
\mathcal{P}_{3}^{1},\mathcal{P}_{3}^{2},\ldots ,\mathcal{P}_{3}^{d}).$
We have
\begin{eqnarray}
R &=&\E\left[ (\frac{h}{2}-\tau )\left( \mathcal{P}_{3}\cdot \nabla
_{q}u(t+h/2,\mathcal{Q}_{c},\mathcal{P}_{3})-\pi \mathcal{P}_{3}\cdot \nabla
_{q}u(t+h/2,\mathcal{Q}_{c},\pi \mathcal{P}_{3})\right) I_{G^{c}}(\mathcal{Q}%
)\right]  \label{eq:lem10_2} \\
&&+r_{1}+r_{3}, \notag
\end{eqnarray}
where $r_{3}$ satisfies the same estimate as $r_{2}$ above. 

Let us first consider the following component of (\ref{eq:lem10_2}).
Using the expression for $\tau ,$ we get 
\begin{eqnarray*}
R_{1} &=&\mathbb{E}\tau \mathcal{P}_{3}^{1}\left[ \frac{\partial }{\partial q^{1}}%
u(t+h/2,0,Q_0^{2},\ldots ,Q_0^{d},\mathcal{P}_{3})+\frac{\partial }{\partial
q^{1}}u(t+h/2,0,Q_0^{2},\ldots ,Q_0^{d},\pi \mathcal{P}_{3})\right] I_{G^{c}}(\mathcal{Q}) \\
&&=\mathbb{E}\left[Q_0^{1}v_{1}(Q_0^{2},\ldots ,Q_0^{d},\mathcal{P}_{3}) I_{G^{c}}(\mathcal{Q})\right],
\end{eqnarray*}%
where 
\begin{equation*}
v_{1}(q^{2},\ldots ,q^{d},p):=\frac{\partial }{\partial q^{1}}%
u(t+h/2,0,q^{2},\ldots ,q^{d},p)+\frac{\partial }{\partial q^{1}}%
u(t+h/2,0,q^{2},\ldots ,q^{d},\pi p).
\end{equation*}%
Let $\rho (q,p)$ be the joint density for  $Q_0,$ $\mathcal{P}_{3}.$ Assume
that all the required moments with respect to this density exist.
Observe that $\{Q \in G^{c}\}=\{Q_0^1-h{\cal P}^1_3<0\}$. Then
\begin{equation*}
R_{1}=\int_{\mathbb{R}^{2d-2}}\int_{0}^{\infty
}\int_{0}^{p^{1}h}q^{1}v_{1}(q^{2},\ldots ,q^{d},p)\rho (q,
p)dq^{1}dp^{1}dq^{2}\cdots dq^{d}dp^{2}\cdots dp^{d}.
\end{equation*}%
Note that%
\begin{equation*}
\int_{0}^{p^{1}h}q^{1}\rho (q, p)dq^{1}=\frac{1}{2}\left( p^{1}\right)
^{2}h^{2}\rho (0,q^{2},\ldots ,q^{d}, p)+{\cal O}(h^{3}).
\end{equation*}%
Hence 
\begin{equation*}
R_{1}=\frac{h^2}{2}\int_{\mathbb{R}^{2d-2}}\int_{0}^{\infty }\left( p^{1}\right)
^{2}\rho (p,0,q^{2},\ldots ,q^{d})U_{1}(p,q^{2},\ldots
,q^{d})dp^{1}dq^{2}\cdots dq^{d}dp^{2}\cdots dp^{d}+{\cal O}(h^3).
\end{equation*}%
Next consider the following component of (\ref{eq:lem10_2}):
\begin{eqnarray*}
R_{2} &=&\frac{h}{2}\mathbb{E}\bigg(\mathcal{P}_{3}^{1}\left[ \frac{\partial }{\partial
q^{1}}u(t+h/2,0,Q_0^{2},\ldots ,Q_0^{d},\mathcal{P}_{3})+\frac{\partial }{%
\partial q^{1}}u(t+h/2,0,Q_0^{2},\ldots ,Q_0^{d},\pi \mathcal{P}_{3})\right]I_{G^{c}}(\mathcal{Q})\bigg)  \\
&=&\frac{h}{2}\int_{\mathbb{R}^{2d-2}}\int_{0}^{\infty
}\int_{0}^{p^{1}h}p^{1}v_{1}(q^{2},\ldots ,q^{d},p)\rho
(q,p)dq^{1}dp^{1}dq^{2}\cdots dq^{d}dp^{2}\cdots dp^{d} \\
&=&\frac{h^2}{2}\int_{\mathbb{R}^{2d-2}}\int_{0}^{\infty }\left( p^{1}\right) ^{2}\rho
(0,q^{2},\ldots ,q^{d},p)v_{1}(q^{2},\ldots ,q^{d},p)dp^{1}dq^{2}\cdots
dq^{d}dp^{2}\cdots dp^{d}+{\cal O}(h^3).
\end{eqnarray*}%
Thus $|R_1-R_2| \leq Ch^3 e^{-\lambda (T-t)}$.
The terms for $i\neq 1$%
\begin{align*}
A_{i}&=\mathbb{E}\left( \tau -h/2\right) \mathcal{P}_{3}^{i} \\
& \times \left[ \frac{\partial }{%
\partial q^{i}}u(t+h/2,0,Q_0^{2},\ldots ,Q_0^{d},\mathcal{P}_{3})-\frac{\partial 
}{\partial q^{i}}u(t+h/2,0,Q_0^{2},\ldots ,Q_0^{d},\pi \mathcal{P}%
_{3})\right]  I_{G^{c}}(\mathcal{Q})
\end{align*}%
are analyzed analogously. 
\end{proof}

\section{Numerical experiments}\label{sec:tests}

In this section we experimentally confirm theoretical results for the numerical integrators from Section~\ref{cld_sec3} in both the finite-time and ergodic case. 

The following notation is used in this section. The Monte Carlo estimator for $\mathbb{E}(\varphi(Q_{N}, P_{N}))$ is $\check{\varphi}_{M} = \frac{1}{M}\sum_{k=1}^{M}\varphi(Q_{N}^{(k)}, P_{N}^{(k)})$, where $(Q_{N}^{(k)}, P_{N}^{(k)})$ are independent realizations of $(Q_{N}, P_{N})$; $\chi$ denotes the number of collisions along a trajectory and  $\check{\chi}_{M}$ is the Monte-Carlo estimator for the average number of collisions; and ${D}_{M}$ denotes the usual sample variance of Monte Carlo estimators.


\begin{experiment}\label{exper1}
In this experiment we consider finite-time weak convergence.
Consider Example~\ref{example2.2} with $G := \{ q^{2}_{1} + q^{2}_{2} < 4 \}]$ and $U(q_1, q_2) = -q_{1}^{2} - q_{2}^{2}$, $\beta = 1$, $\alpha = 0.25$ and the final time $T=4$. Hence, the solution is $u(0, q,p) = \exp(-|p|^{2}/2  + |q|^{2} - 2)$. The point at which $u(0, q,p)$ is evaluated in the experiments  is $(q_1, q_2) = (1,1)$ and $(p_1, p_2) = (-0.1,-0.1)$. To compute the global error $e = \check{\varphi}_{M} - u(0,(1,1),(-0.1,-0.1))$, we calculate the exact solution $u(0,(1,1),(-0.1,-0.1)) = 0.99005$ (5 d.p.).
We test the first-order scheme [PA$_c$] and two second-order schemes [OBA$_c$BO] and [BA$_c$OA$_c$B]. 
Figure~\ref{fig:ft} confirms second-order accuracy of [OBA$_c$BO] and [BA$_c$OA$_c$B]. The first-order method [PA$_c$] had slightly higher accuracy than first order but clearly lower than second order. 
We also confirmed in this experiment that the average number of collisions $\check{\chi}_{M}$ does not depend on step size $h$: it was approximately $3.7$ for all the methods and time steps. 
In this experiment the $O$ step in the [OBA$_c$BO] and [BA$_c$OA$_c$B] schemes is
\[
O(p;h,\xi,\alpha):=p e^{ \alpha h} +\sqrt{\frac{\sigma ^{2}}{2\alpha }(\mathrm{e}^{2\alpha h} - 1)}\xi .
\]

\begin{figure}[H]
  \centering
   
        \includegraphics[width=7cm]{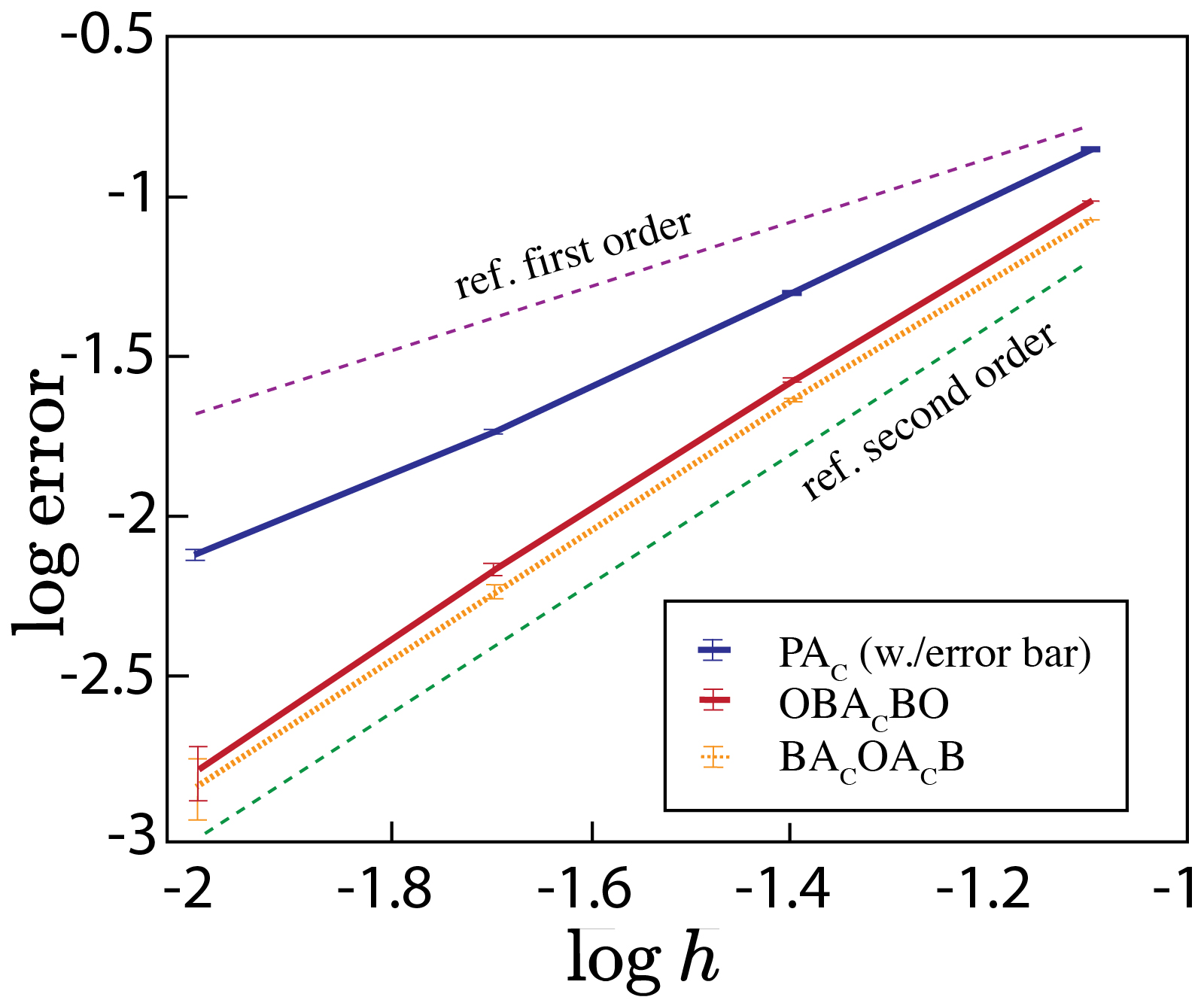}
 \captionof{figure}{Experiment~\ref{exper1}. The global error $e$ for the first-order method [PA$_c$] and for [OBA$_c$BO] and [BA$_c$OA$_c$B]. Error bars indicate the Monte Carlo error.\label{fig:ft}}

\end{figure}

 \end{experiment}


\begin{experiment}\label{exper2}
In this experiment we test convergence of second-order integators [A$_c$BOBA$_c$], [A$_c$OBOA$_c$], [BA$_c$OA$_c$B], [BOA$_c$OB], [OA$_c$BA$_c$O] and [OBA$_c$BO] to ergodic limits.

Take $G := (1,\infty) $ and $\partial G := \{1\} $.  Consider (\ref{ecld}) with the potential
\begin{align*}
    U(q) = q^{2}/2
\end{align*}
and $ \beta = 1$, $\gamma = 1$.  
We note that this $U(q)$ does not satisfy Assumption~\ref{as:5} in its part that we required $U(q)>1$.  
At the same time, the SDE stays the same if we consider $U(q) = q^2/2 + 1$ instead, which does satisfy the assumption. This change will only affect 
the normalization constant of the invariant densisty and hence the value we are computing but not anything else.
See also the comment after Assumption~\ref{as:5}.

For numerical simulation, we choose $T = 20$ as a sufficiently large time to achieve accurate values of the ergodic limit of $\varphi(q) = q^{2}/2$ with respect to this choice of (\ref{ecld}). The exact value of $\bar{\varphi} $ is $1.2626$ (4 d.p.). The starting point of Markov chain is $Q_{0} = 2$ and $P_0 = -0.1$.  We denote $e:=|\check{\varphi}_{M} - \bar{\varphi}|$.
The results are presented in 
Fig.~\ref{fig:erg}, which clearly shows second order convergence for all the integrators. As expected \cite{BenM13,leimkuhler_mathews_15},   [BA$_c$OA$_c$B] and [OA$_c$BA$_c$O] are the most accurate schemes. 

\begin{figure}[H]
\centering
\hspace{0.6cm}
        \includegraphics[width=10cm]{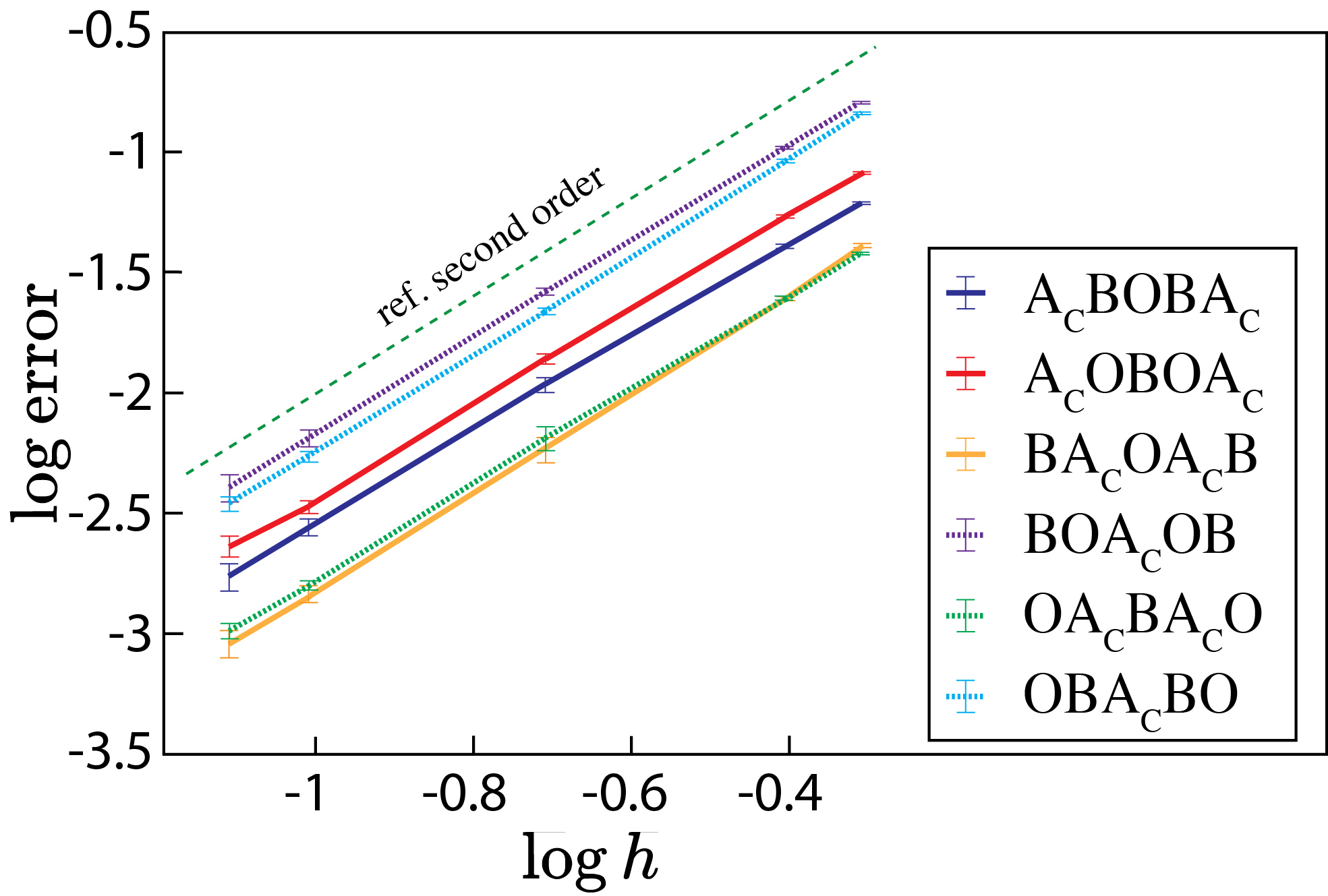}
 \captionof{figure}{Experiment \ref{exper2}. Dependence of the global error $e$ on $h$ for [A$_c$BOBA$_c$], [A$_c$OBOA$_c$], [BA$_c$OA$_c$B], [BOA$_c$OB], [OA$_c$BA$_c$O] and [OBA$_c$BO]. Error bars indicate the Monte Carlo error.\label{fig:erg}}
\end{figure}

\end{experiment}


\begin{experiment}\label{exper3}
In this experiment we test convergence of second-order integrators  [BA$_c$OA$_c$B] and [OBA$_c$BO] to ergodic limits.
Take G:= $\{ q_{1}^{2} + q_{2}^{2} < 4\}$. Consider (\ref{ecld}) with the potential
\[
U(q_{1},q_{2}) = \frac{1}{2}(q_{1} - q_{2})^{2} + \frac{q_{1}^{2}(q_{1}^{2}-12)}{12} + \frac{q_{2}^{2}(q_{2}^{2}-24)}{12}
\]
and $\beta = 1$, $\gamma = 4$. We choose 
\[
\varphi(q_{1},q_{2}) = U(q_1,q_2).
\]
We note that the potential $U$ is not symmetric around zero and that its local minima lie outside $G$. 
We are interested in approximating $\bar{\varphi}$. We take $T = 12$. We calculate the integral $\bar{\varphi}$ using integral2 function in Matlab with tolerance $10^{-10}$ which gives the reference value $-4.18006$ (5 d.p.). The starting point of the Markov chain is $(q_1, q_2) = (1,1)$ with $(p_1, p_2) = (-0.1, -0.1)$. We denote $e:=|\check{\varphi}_{M} - \bar{\varphi}|$. The results are presented in 
Fig.~\ref{fig:erg21}, which clearly shows second-order convergence for both integrators with [BA$_c$OA$_c$B] being more accurate.

\begin{figure}[H]
 \centering
\hspace{0.6cm}
        \includegraphics[width=7cm]{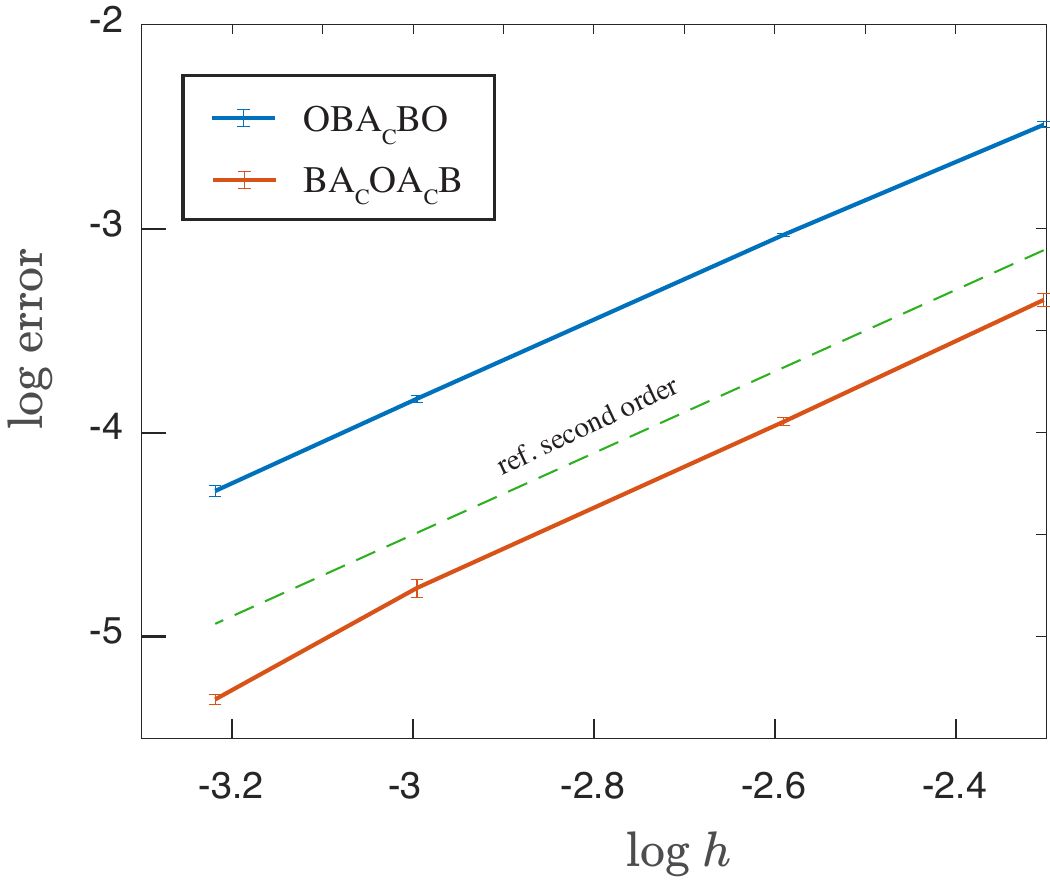}
 \captionof{figure}{Experiment \ref{exper3}. Dependence of the global error $e$ on $h$ for [OBA$_c$BO] and [BA$_c$OA$_c$B]. Error bars indicate the Monte Carlo error.\label{fig:erg21}}

\end{figure}

\end{experiment}

\begin{experiment}\label{exper4}
We take $G = \{ q_{1}^{2} + q_{2}^{2} < 4\}$ and $ \partial G = \{ q_{1}^{2} + q_{2}^{2} = 4\}$. We sample from distribution given by 
\begin{align}
\rho(q,p) = \frac{1}{\mathbb{Z}}e^{K |q|^{2} - |p|^{2} },\;\;\;\; q = (q_{1},q_{2}) \in \bar{G},\; p = (p_{1}, p_{2}) \in \mathbb{R}^{d}, 
\end{align}
and compute 
\begin{align}
\bar{\varphi} = \int_{G} (q_{1}^{2} + q_{2}^{2})e^{K (q_{1}^{2} + q_{2}^{2})}dq_{1} dq_{2} \Big/ \int_{G} e^{K (q_{1}^{2} + q_{2}^{2})}dq_{1} dq_{2}
\end{align}
via its Monte Carlo estimator based on two schemes [OBA$_{c}$BO] and [BA$_c$OA$_c$B] which are used to discretize (\ref{ecld}). We take $K = 5$ resulting in $\bar{\varphi} = 3.8000 $ (5.d.p.). This choice of $K$ is made in order to increase the number of collisions with the boundary and the aim is to check if in this case the two schemes  [OBA$_{c}$BO] and [BA$_c$OA$_c$B] maintain second order of convergence asymptotically as the  discretization step $h$ decreases. We take the initial conditions  $ (q_{1}, q_{2}) = (0,0)$ and $(p_{1}, p_{2}) = (-0.1, -0.1)$.

It can be seen from 
Fig.~\ref{fig:erg5.4} that both numerical schemes [OBA$_c$BO] and [BA$_c$OA$_c$B] have first roughly $\mathcal{O}(h)$ weak error then roughly $\mathcal{O}(h^{3/2})$ error and, as $h$ decreases, we clearly observe $\mathcal{O}(h^{2})$ convergence.  

\begin{figure}[H]
 \centering
\hspace{0.6cm}
   
        \includegraphics[width=10cm]{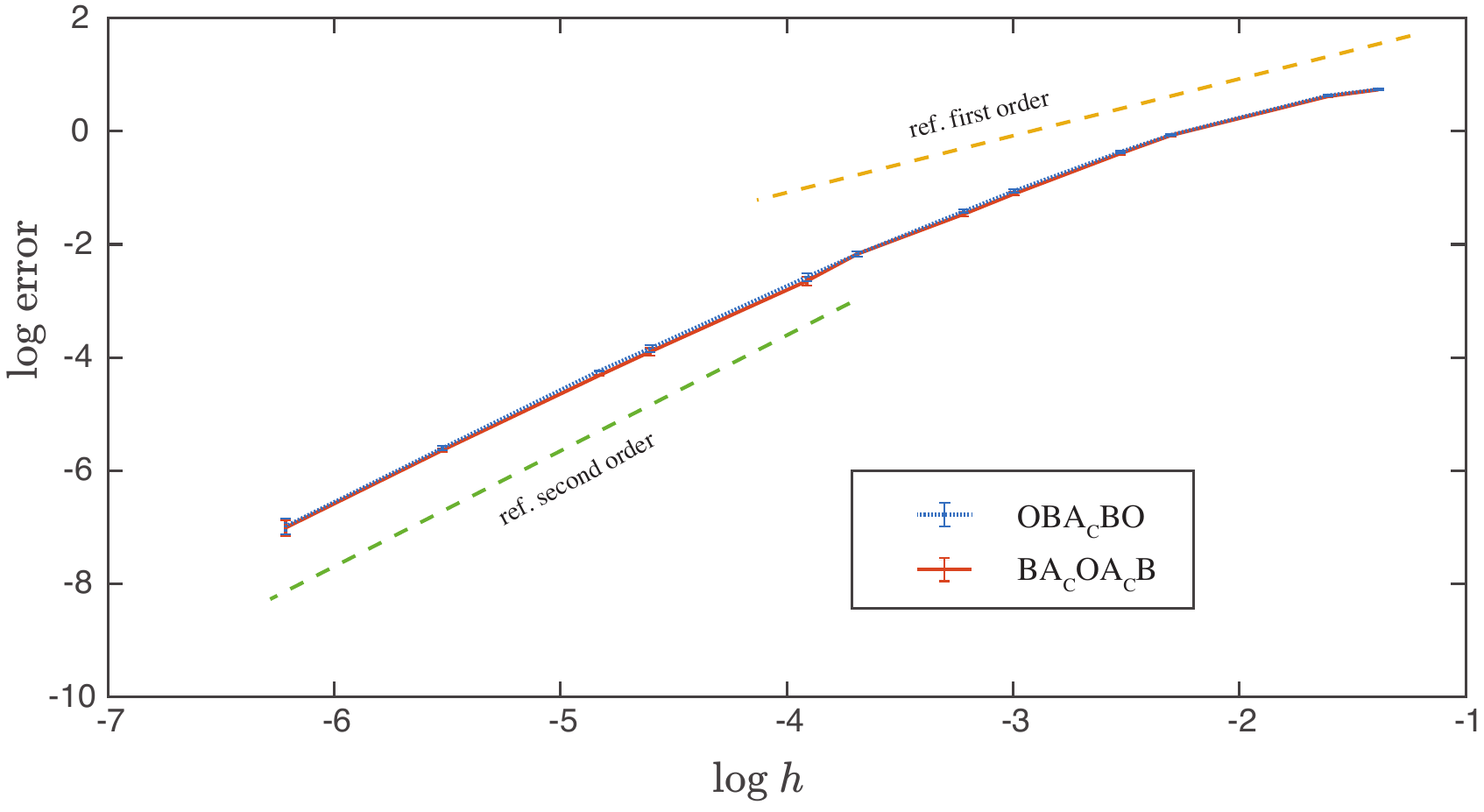}
 \captionof{figure}{Experiment \ref{exper4}. Dependence of the global error $e$ on $h$ for [OBA$_c$BO] and [BA$_c$OA$_c$B]. Error bars indicate the Monte Carlo error.\label{fig:erg5.4}}
\end{figure}

\end{experiment}

\begin{experiment}\label{exp_Neal_funnel} (\textbf{Truncated funnel})  
We consider Neal's funnel \cite{neal2001annealed} in this experiment which  is a hierarchical model defined as:
\begin{align}
 \theta &\sim \mathcal{N}(0, 3^2) \\
x_i \mid \theta &\sim \mathcal{N}(0, e^{\theta/2}),\quad i=1,\dots, 8.
\end{align}
We truncate $\theta$ to $[-3,1]$. Therefore, $G$ in this experiment is $(-3, 1)\times \mathbb{R}^{8}$.   The potential function is $U(\theta, \mathbf{x}) = \frac{\theta^2}{18} + 4\theta + \frac{1}{2} e^{-\theta} \sum_{i=1}^8 x_i^2$. We aim to estimate $\mathbb{E}[U]$, i.e. expectation with respect to Gibbs measure, using discretizations of \eqref{cld} and compare with exact value $0.6670 $ (4 d.p.). 

We test [OBA$_c$BO] and [BA$_c$OA$_c$B] schemes  in this experiment. Trajectory of [OBA$_c$BO] for  the first two dimensions can be seen in Fig.~\ref{neal_exp_traj}. We observe a second-order convergence of [OBA$_c$BO] scheme  (see Figure~\ref{Nealfunnel_OBABO}) and the very high accuracy of [BA$_c$OA$_c$B] (see Figure~\ref{exact_BAOAB}), the latter is quite remarkable and consistent with observations of higher accuracy of [BAOAB] in comparison with other 2nd order concatenations in the case of the whole Euclidean space \cite{leimkuhler_mathews_15}.    

\begin{figure}[H]
    \centering
    \includegraphics[width=0.4\linewidth]{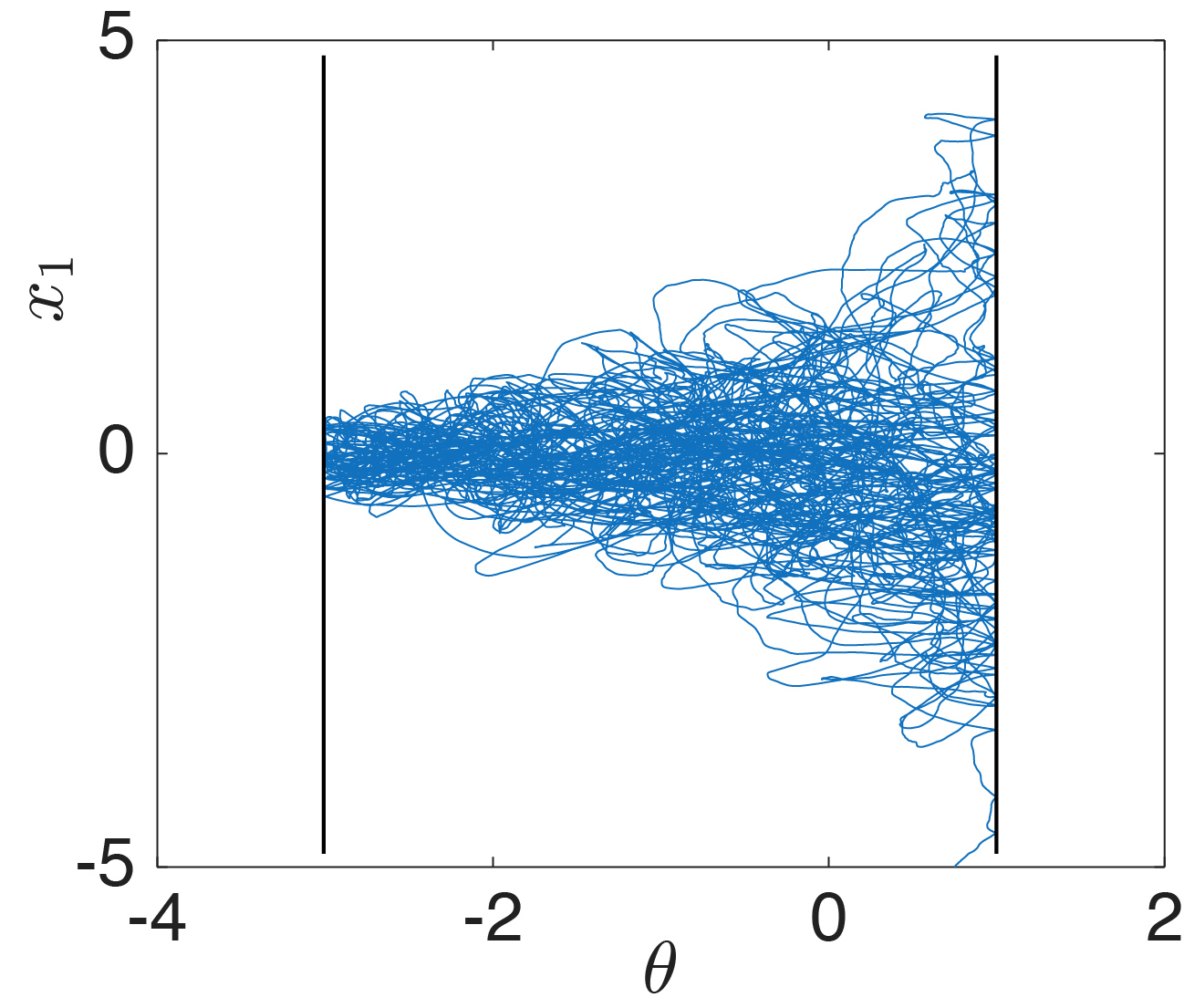}
    \caption{Experiment~\ref{exp_Neal_funnel}. Trajectorial evolution of [OBA$_c$BO] in the first two dimensions of the potential energy surface. }
    \label{neal_exp_traj}
\end{figure}

\begin{figure}[H]
    \centering
    \includegraphics[width=0.4\linewidth]{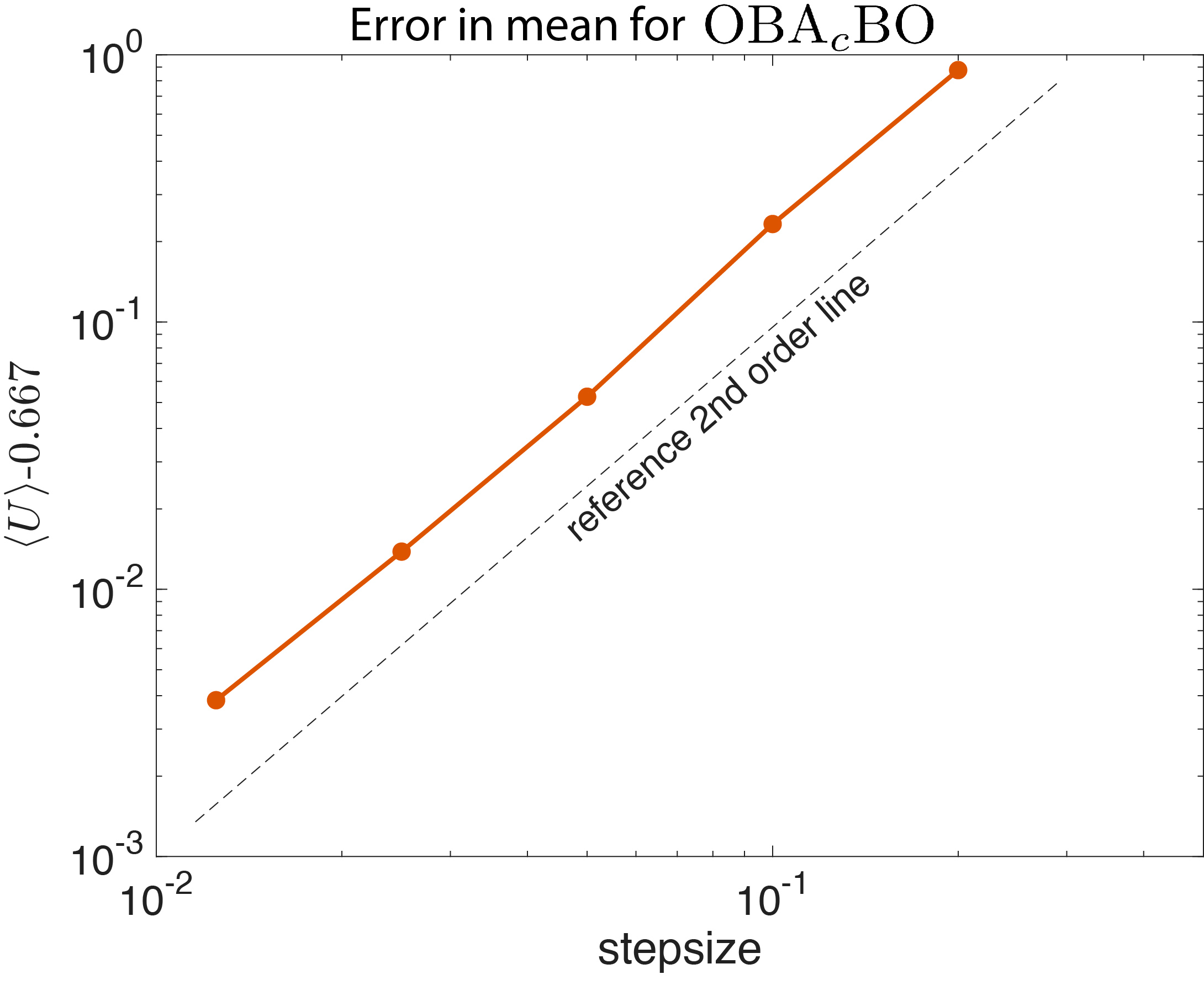}
    \caption{Experiment~\ref{exp_Neal_funnel}. Second-order convergence of [OBA$_c$BO]. Here, we have denoted with $\langle  U \rangle$ the estimator of $\mathbb{E}(U)$ where expectation is with respect to the Gibbs measure with potential $U$. }
    \label{Nealfunnel_OBABO}
\end{figure}

\begin{figure}[H]
    \centering
    \includegraphics[width=0.4\linewidth]{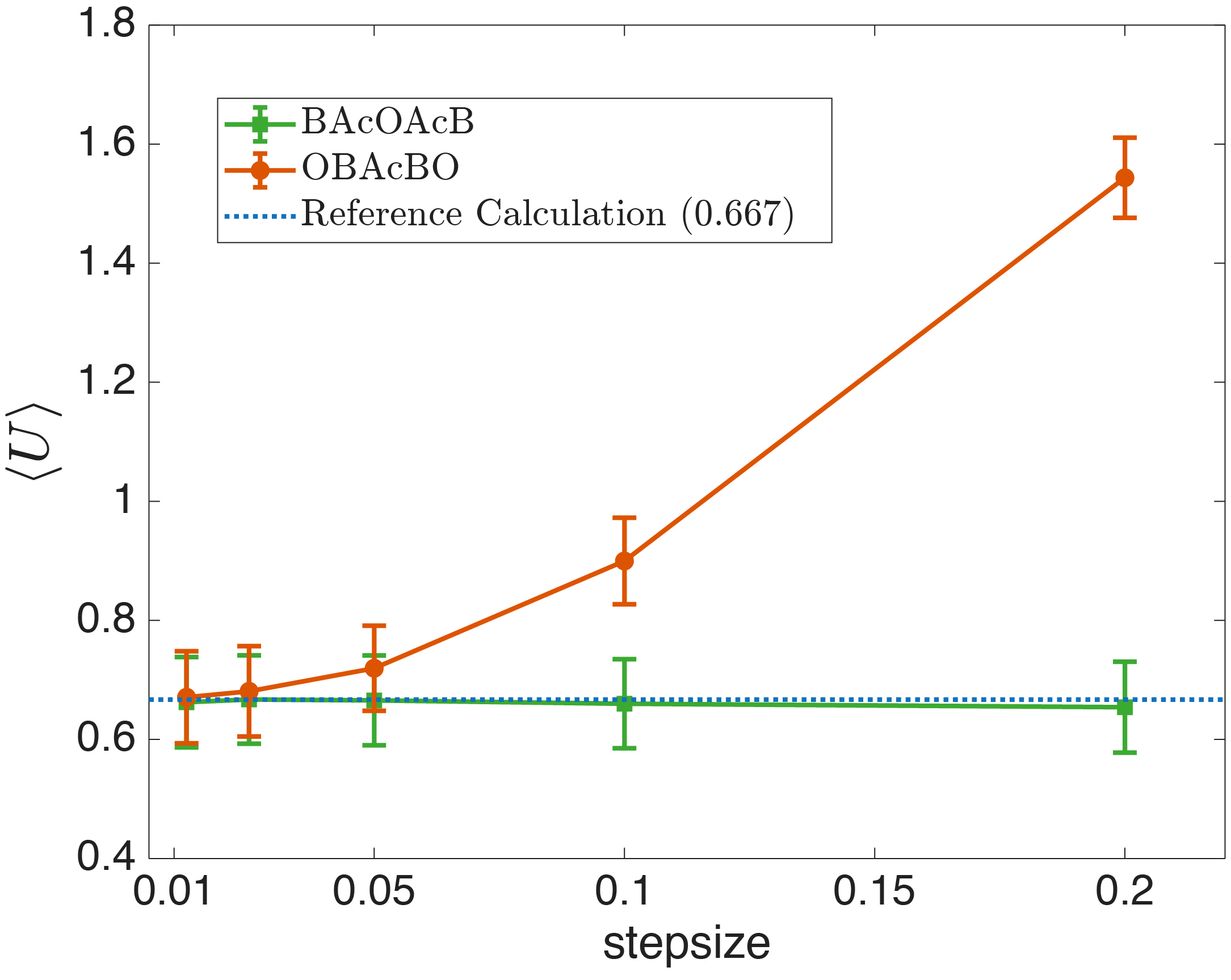}
    \caption{Experiment~\ref{exp_Neal_funnel}. Error bars denote Monte Carlo error. }
    \label{exact_BAOAB}
\end{figure}

\end{experiment}

\begin{experiment} (\textbf{Bayesian inference for SIR model}) 
The SIR (Susceptible-Infected-Recovered) model describes epidemic dynamics through a system of ordinary differential equations
\cite{SIR}:

\begin{align}
\frac{dS}{dt} &= -\eta \frac{SI}{N}, \label{eq:SIR} \\
\frac{dI}{dt} &= \eta \frac{SI}{N} - \alpha I , \notag \\
\frac{dR}{dt} &= \alpha I, \notag
\end{align}
where $S(t)$, $I(t)$, $R(t)$ are the numbers of susceptible, infected, and recovered individuals at time $t$ with $N = S + I + R$ is the total population being constant;  $\eta > 0$ is the transmission rate
and $\alpha > 0$ is the recovery rate. Then the basic reproduction number is $R_0 = \eta/\alpha$. 
We choose Gamma priors for both parameters: $
\eta \sim \text{Gamma}(2, 2) \quad \text{(shape = 2, rate = 2)}, \;\;
\alpha \sim \text{Gamma}(2, 4) \quad \text{(shape = 2, rate = 4)}$. We also include a prior belief that $\eta$ and $\alpha$ are greater than $1.5$. 
Given observed data $\mathbf{I}_{\text{obs}} = \{I_{\text{obs}}(t_1), \ldots, I_{\text{obs}}(t_n)\}$ and parameters $Q = (\eta, \alpha)$, likelihood function, denoted as $\mathcal{L}$, is:

\begin{align*}
\mathcal{L}(Q | \mathbf{I}_{\text{obs}}) &= \prod_{j=1}^n \rho(I_{\text{obs}}(t_j) | Q) 
= \prod_{j=1}^n \frac{1}{\sqrt{2\pi\sigma^2}} \exp\left(-\frac{(I_{\text{obs}}(t_j) - I_{\text{pred}}(t_j; Q))^2}{2\sigma^2}\right), \label{eq_cld_5.8}
\end{align*}
where $I_{\text{pred}}(t_j; \boldsymbol{\theta})$ is the predicted infected count from solving the SIR system (\ref{eq:SIR}) with parameters $Q$. By Bayes' theorem:
\begin{align*}
\rho(Q | \mathbf{I}_{\text{obs}}) &\propto \mathcal{L}(Q | \mathbf{I}_{\text{obs}}) \times \rho(Q), \\
\log \rho(Q | \mathbf{I}_{\text{obs}}) &= \log \mathcal{L}(Q | \mathbf{I}_{\text{obs}}) + \log \rho(\eta) + \log \rho(\alpha) + \text{constant}.
\end{align*}

 We take the total population as $1000$ and initial number of infected individuals as $10$: $I(0) = 10$, $R(0) = 0$ and $S(0) = 990 $.  As the ground truth, we take $\eta $ = 0.7 and $\alpha = 0.2$  (therefore, $R_0$ = 3.5) and  simulate  (\ref{eq:SIR}) over time $t \in [0, 50]$ to create $51$ noisy observations at regular unit intervals by adding Gaussian noise with mean $0$ and standard deviation $4$. Therefore, $n = 51$ in $\mathbf{I}_{\text{obs}} = \{I_{\text{obs}}(t_1), \ldots, I_{\text{obs}}(t_n)\}$. 
We use $\sigma  = 100$.

In this Bayesian inference example, we compare four algorithms. First two algorithms are two numerical schemes for reflected overdamped Langevin dynamics \eqref{rgsde}: projection scheme  and symmetrized reflection scheme \cite{2, lst23}. We call them Projected Langevin algorithm (PLA) and Reflected Langevin algorithm (RLA), respectively. The other two are [OBA$_c$BO] and [BA$_c$OA$_c$B] from Section~\ref{cld_sec3}.  For the sake of completeness, we mention below the one-step of procedure of PLA and RLA: 

 \begin{equation}
 \tag{PLA}
 \begin{aligned}
 Q_{k+1} = \Pi(Q_{k+1}^{'})
 \end{aligned}
 \end{equation}
 and
 \begin{equation}
 \tag{RLA}
 \begin{aligned}
 Q_{k+1} = Q_{k+1}^{'}  - 2 \dist(\Pi(Q_{k+1}^{'}), Q_{k+1}^{'})\Pi(Q_{k+1}^{'})
 \end{aligned}
 \end{equation}
 where $\Pi(q)$ denotes projection of $q$ on boundary $\partial G$ and
 \begin{align}
 Q_{k+1}' = Q_k + h \nabla \log \rho(Q_k | \mathbf{I}_{\text{obs}}) + \sqrt{2h} \xi_{k+1}
 \end{align}
 with $\xi_{k+1} \sim \mathcal{N}(\mathbf{0}, \mathbf{I})$. We approximate  $\nabla \log \rho(Q | \mathbf{I}_{\text{obs}})$ by forward finite difference with discretization parameter $\epsilon = 10^{-8}$.

 We compute the mean and standard deviation based on the time averaging estimators. 
The estimator used for estimating $R_0$ is
\begin{align}
    \frac{\sum_{i=1}^{M} \eta^{i}}{\sum_{i=1}^{M}\alpha^{i}},
\end{align}
where $M $ denotes the number of steps after the burn-in period of length $ 10$. We used $h=0.001, 0.0005$ and $T= 100$ resulting in $M = T/h - 10/h$ number of steps. 
Using the sample means of recovery and transmission rates, where samples are generated using five different algorithms, as the best estimates for the true rates, we simulate the evolution of the number of susceptible, infected, and recovered individuals with time from $0$ to $50$. In Fig.~\ref{fig:placeholder_noPq}, the solid line represents the evolution of SIR ODEs with true parameters, while dotted lines show the evolution with estimated parameters. For uncertainty quantification, we calculate a $95\%$ credible interval using the 95th percentile of samples. The superior performance of [OBA$_c$BO] and [BA$_c$OA$_c$B] methods is clear from Fig.~\ref{fig:placeholder_noPq}.

 Tables~\ref{tab:SIR_posterior_comparison_h_002}-\ref{tab:SIR_posterior_comparison_h_001} demonstrate that [BA$_c$OA$_c$B] provides the estimate $(\hat{\eta}, \hat{\alpha}) = (0.724, 0.216)$ for  $h= 0.002$ whereas comparable accuracy requires $h = 0.001$ for PLA and RLA. The same is true for [OBA$_c$BO]. 

Comparing Table~\ref{tab:SIR_posterior_comparison_h_001} and Table~\ref{tab:SIR_posterior_comparison_h_0.0005}, we see an improvement in estimates of PLA and RLA. However, no improvement is observed in case of [OBA$_c$BO] and [BA$_c$OA$_c$B] for $h = 0.0005$ suggesting that the statistical error  dominates the discretization error.

\begin{table}[htbp]
\centering
\caption{Posterior Statistics comparison $h = 0.002$.}
\begin{tabular}{|l|cc|cc|cc|}
\hline
\multirow{2}{*}{\textbf{Algorithm}} & \multicolumn{2}{c|}{\textbf{$\eta$ (transmission rate)}} & \multicolumn{2}{c|}{\textbf{$\alpha$ (recovery rate)}} & \multicolumn{2}{c|}{\textbf{$R_0 = \eta/\alpha$}} \\
\cline{2-7}
& \textbf{Mean} & \textbf{Std} & \textbf{Mean} & \textbf{Std} & \textbf{Mean} & \textbf{Std} \\
\hline
PLD  & 0.759 & 0.082 & 0.255 & 0.095 & 3.441 & 1.325 \\
RLD  & 0.746 & 0.078 & 0.256 & 0.080 & 3.195 & 1.010  \\
OBA$_c$BO & 0.724 & 0.011 & 0.216 & 0.049 & 3.542 & 0.854 \\
BA$_c$OA$_c$B & 0.724 & 0.011 & 0.217 & 0.049 & 3.529 & 0.851 \\
\hline
\textbf{True Values} & \multicolumn{2}{c|}{0.700} & \multicolumn{2}{c|}{0.200} & \multicolumn{2}{c|}{3.500} \\
\hline
\end{tabular}
\label{tab:SIR_posterior_comparison_h_002}
\end{table}

\begin{table}[htbp]
\centering
\caption{Posterior Statistics comparison $h = 0.001$.}
\begin{tabular}{|l|cc|cc|cc|}
\hline
\multirow{2}{*}{\textbf{Algorithm}} & \multicolumn{2}{c|}{\textbf{$\eta$ (transmission rate)}} & \multicolumn{2}{c|}{\textbf{$\alpha$ (recovery rate)}} & \multicolumn{2}{c|}{\textbf{$R_0 = \eta/\alpha$}} \\
\cline{2-7}
& \textbf{Mean} & \textbf{Std} & \textbf{Mean} & \textbf{Std} & \textbf{Mean} & \textbf{Std} \\
\hline
PLA  & 0.724 & 0.058 & 0.219 & 0.052 & 3.505 & 0.866 \\
RLA  & 0.724 & 0.057 & 0.222 & 0.049 & 3.413 & 0.766  \\
OBA$_c$BO & 0.703 & 0.002 & 0.202 & 0.001 & 3.476 & 0.011 \\
BA$_c$OA$_c$B & 0.702 & 0.002 & 0.202 & 0.001 & 3.482 & 0.009 \\
\hline
\textbf{True Values} & \multicolumn{2}{c|}{0.700} & \multicolumn{2}{c|}{0.200} & \multicolumn{2}{c|}{3.500} \\
\hline
\end{tabular}
\label{tab:SIR_posterior_comparison_h_001}
\end{table}


\begin{table}[htbp]
\centering
\caption{Posterior Statistics comparison $h = 0.0005$.}
\begin{tabular}{|l|cc|cc|cc|}
\hline
\multirow{2}{*}{\textbf{Algorithm}} & \multicolumn{2}{c|}{\textbf{$\eta$ (transmission rate)}} & \multicolumn{2}{c|}{\textbf{$\alpha$ (recovery rate)}} & \multicolumn{2}{c|}{\textbf{$R_0 = \eta/\alpha$}} \\
\cline{2-7}
& \textbf{Mean} & \textbf{Std} & \textbf{Mean} & \textbf{Std} & \textbf{Mean} & \textbf{Std} \\
\hline
PLA  & 0.714 & 0.049 & 0.209 & 0.033 & 3.501 & 0.577 \\
RLA  & 0.715 & 0.049 & 0.210 & 0.032 & 3.481 & 0.551 \\
OBA$_c$BO & 0.703 & 0.002 & 0.202 & 0.0004 & 3.481 & 0.005 \\
BA$_c$OA$_c$B & 0.702 & 0.002 & 0.202 & 0.0004 & 3.481 & 0.005 \\
\hline
\textbf{True Values} & \multicolumn{2}{c|}{0.700} & \multicolumn{2}{c|}{0.200} & \multicolumn{2}{c|}{3.500} \\
\hline
\end{tabular}
\label{tab:SIR_posterior_comparison_h_0.0005}
\end{table}

\begin{figure}
    \centering
    \includegraphics[width=1\linewidth]{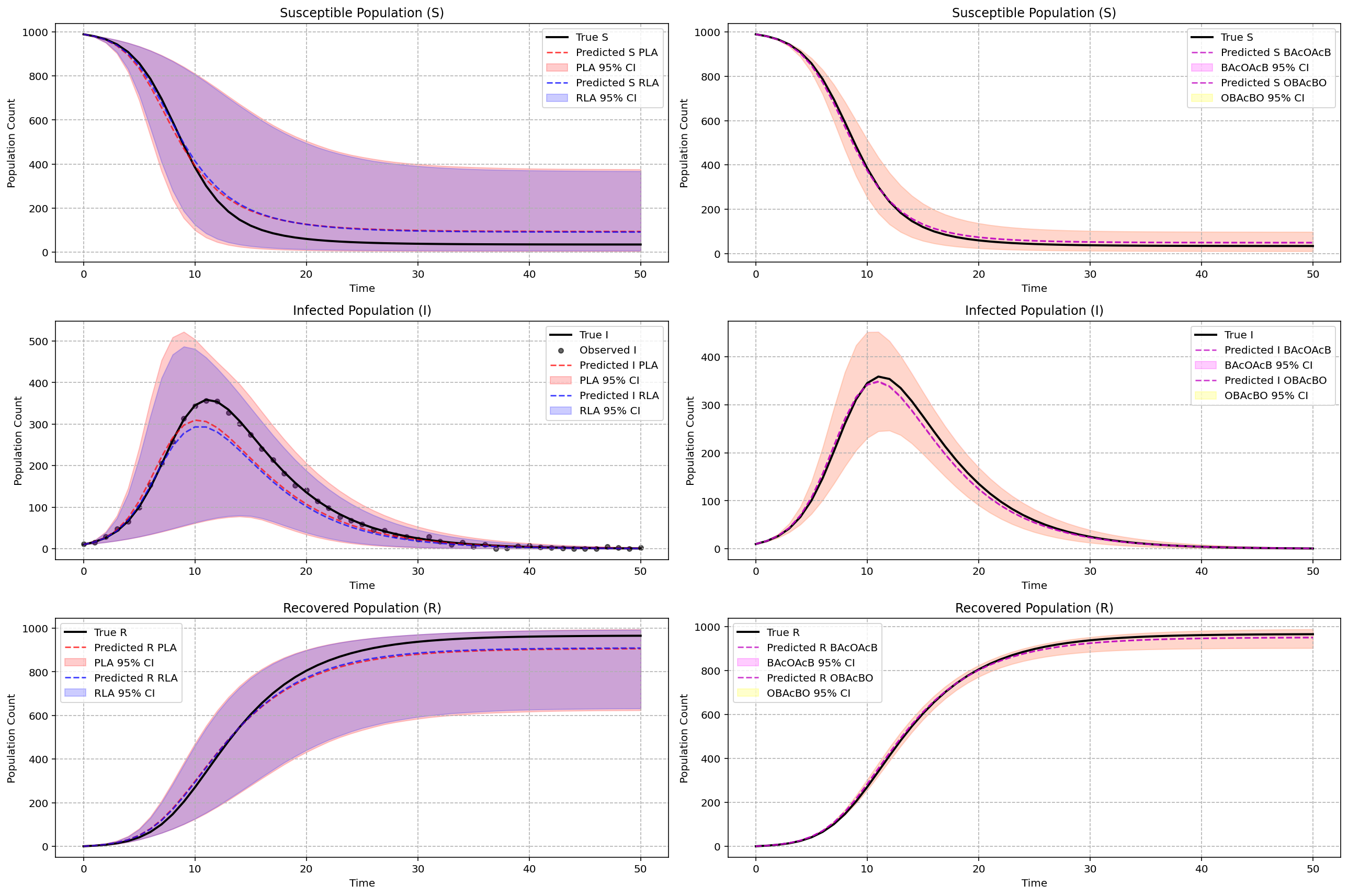}
    \caption{The solid line depicts the temporal evolution of the SIR model (\ref{eq:SIR}) using the true parameter values, i.e. $\eta = 0.7$ and $\alpha = 0.2$, whereas the dotted lines illustrate the corresponding SIR dynamics obtained with the estimated parameters where PLA, RLA, OBA$_c$BO, BA$_c$OA$_c$B are simulated using $h = 0.002$.}
    \label{fig:placeholder_noPq}
\end{figure}



\end{experiment}

\section{Conclusions}\label{sec:concl}

In this article, we have constructed and analyzed a family of numerical methods for simulating confined Langevin dynamics based on modification of splitting schemes for the unconfined case. The considered algorithms can be used in a variety of applications from molecular dynamics to statistical sampling from distributions with compact support and optimization with constraints. We proved finite-time convergence and convergence to ergodic limits for our first-order schemes. We observed and justified (in the case of  half-space) a surprising result that the proposed [A$_c$, B, O] St\"{o}rmer-Verlet-type splitting schemes are of second weak order despite the fact that their deterministic counterparts are only of first order due to the use of the simple collision step A$_c$. 

Our paper opens quite a few avenues for potential future research directions related to Langevin dynamics with specular reflection.  These include further testing of the proposed geometric integrators, e.g. applying them to train neural networks with imposed constraints on weights \cite{leimkuhler_constraint_net}. It is of interest to extend the developed approach to the case when particles are not only colliding with the boundary of a domain but  also  with each other. Constructing efficient numerical methods for Langevin equations with non-elastic collisions on the boundary is another  possible topic for future work. An additional line of research is approximation of McKean-Vlasov-type specularly reflected kinetic SDEs with applications to sampling and optimization (e.g., distribution dependence can occur due to covariance preconditioning \cite{garbuno2020interacting, leimkuhler2018ensemble, ringh2025kalman}) and their comparison with constrained sampling and optimization methods based on reflected (using local time) McKean-Vlasov SDEs  \cite{hinds2024well}. Our work also presents further motivation to studying linear second-order PDEs with specular boundary conditions building on the recent paper \cite{PDE25}. 
We made an experimental observation that for the considered methods, which are 2nd order for  confined Langevin dynamics, the corresponding deterministic schemes with random initial data are also of 2nd order (recall that with fixed initial data they are of 1st order); consequently, numerical analysis of such schemes for random and chaotic Hamiltonian systems is likely to be of interest.  Further theoretical analysis of the constructed second-order integrators deserves further attention too.

\section*{Acknowledgments}
AS and MVT were supported by the Engineering and Physical Sciences Research Council [grant number EP/X022617/1]. The authors express their  gratitude to the International Centre for Mathematical Sciences (ICMS) for its support via the research-in-groups scheme.  We are grateful to Professor Marvin Weidner for providing additional insights in the work \cite{PDE25}.
For the purpose of open access, the authors have applied a CC BY public copyright license to any Author Accepted Manuscript version arising.

\bibliographystyle{plain}
\bibliography{references}

\appendix

\section{Numerical integration of collisional  Hamiltonian dynamics}\label{sec:colisHD}

Here, we continue the discussion from Section~\ref{prlim_dis_sec} by providing further insights into Hamiltonian dynamics with collision. 
Consider the following Hamiltonian dynamics $(Q(t), P(t))$ with elastic reflection at the boundary $\partial G$:
\begin{align*}
    Q(t) &= Q_0 + \int_{0}^{t} P(s) ds, \quad \quad Q_0 \in G, \\ 
    P(t) &= P_0 -\int_{0}^{t} \nabla U(Q(s)) ds -\sum\limits_{0<s\leq t}2(P({s^{-}})\cdot n_{G}(Q(s)))n_{G}(Q(s))I_{ \partial G}(Q(s)), \quad\quad P_0 \in \mathbb{R}^d.
 \end{align*}
 It can easily be verified that 
 \begin{align}
     H(Q(T), P(T)) = H(Q_0, P_0),
 \end{align}
 where $H(q,p) = U(q) + |p|^2/2$. 
If $G = \mathbb{R}^d$, i.e. if there were no boundary and hence no collision, we know that the energy error incurred by Störmer-Verlet method is $\mathcal{O}(h^2)$, i.e., 
\begin{align}
    H(Q_N, P_N) - H(Q_0, P_0) = \mathcal{O}(h^2), 
\end{align}
where $(Q_N, P_N)$ are $N$-th step of Störmer-Verlet with $T = Nh$. 
 
Recall that in the error expansion (\ref{cld_neweq_4.45})-\ref{cld_neweq_4.47}) in Section~\ref{prlim_dis_sec}, we show that the collision time $\tau$ within a time step $h$ is critical for the error analysis. This collision time significantly changes the convergence properties: while standard deterministic Hamiltonian dynamics yields second-order accuracy, the introduction of collisions in deterministic collisional Hamiltonian dynamics reduces the method to first-order accuracy, i.e. 
\begin{align}
     H(Q_N, P_N) - H(Q_0, P_0) = \mathcal{O}(h),  
\end{align}
where $(Q_N, P_N)$ are $N$-th step of collisional Störmer-Verlet (i.e. [BA$_c$B] scheme). 
A numerical illustration comparing Störmer-Verlet  with its collisional counterpart is provided in 
Fig.~\ref{collis_Hamil_compare_append}. 

\begin{figure}[H]
  \centering
   
        \includegraphics[width=7cm]{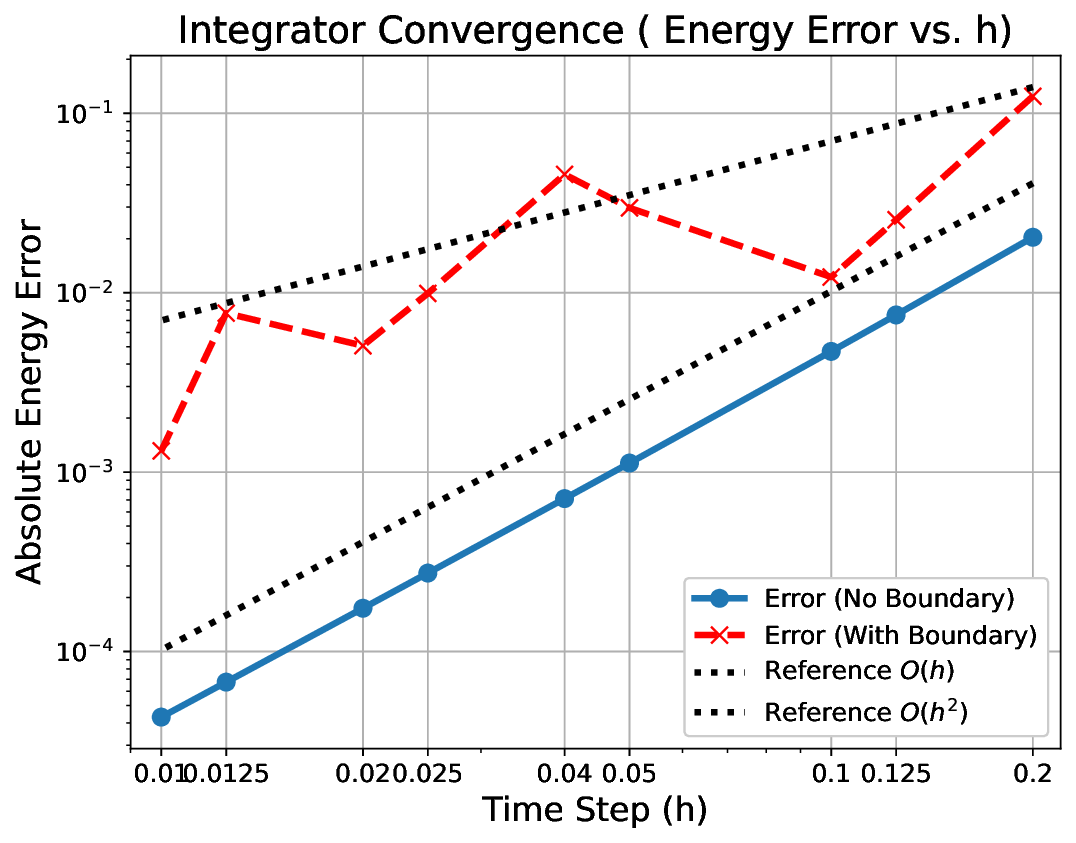}
 \captionof{figure}{The energy error for the Störmer-Verlet scheme and its collisional counterpart in the case of  quadratic potential $U(q) = |q|^2/2$,  $T = 5$ and the initial conditions $Q_0 =(0.1, 0.5)$ and $P_0 = (1.5, 1.5)$. For collisional dynamics, domain $G$ is the square $(0, 1)^2$.}\label{collis_Hamil_compare_append}

\end{figure}

We also recall that the stochastic version of the Störmer-Verlet scheme, e.g. [OBA$_c$BO] from Section~\ref{sec:2nd}, remains second-order even in the collisional setting due to the presence of noise (see Section~\ref{sec:2ndorder}). This  raises the natural question:  if we randomize the initial conditions of Hamiltonian dynamics, would we observe second-order convergence of the energy error in the collisional setting?
We provide a numerical illustration to answer this question. 

Consider the Hamiltonian dynamics with quadratic potential $U = |q|^2/2$ and with the fixed $Q_0 = (0.1, 0.5)\in G$ and i.i.d. $P_0^i \sim \mathcal{N}(0 , 1)$, $i =1,2$. We look at the two situations: $G = \mathbb{R}^2$ and $G = (0,1)^2$. In both situations, we compute the following error:
\begin{align}
    \text{Error} = | \mathbb{E}H(Q_N, P_N) - \mathbb{E}H(Q_0, P_0)|.
\end{align}
We use the Monte Carlo method with number of independent trajectories being $10^6$ to estimate the expectation.
We observe that the collisional Störmer-Verlet scheme has second-order convergence in the case of random initial condition, see Figure~\ref{random_collis_Hamil_append}. 
It is of interest to continue studying numerical approximation of random and chaotic Hamiltonian dynamics but this is not within the scope of this paper. 

\begin{figure}[H]
  \centering
   
        \includegraphics[width=7cm]{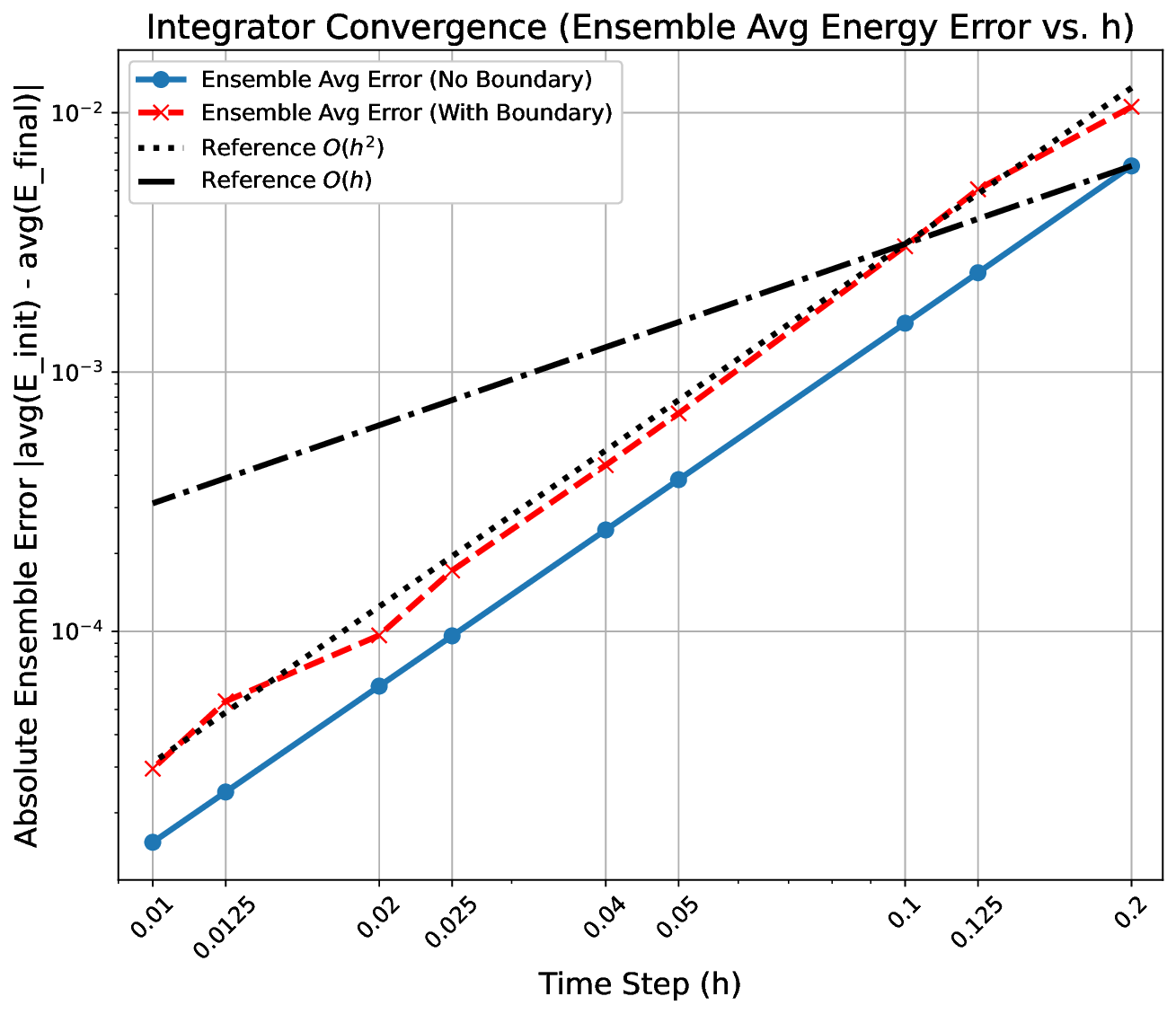}
 \captionof{figure}{
 The energy error for the Störmer-Verlet scheme and its collisional counterpart in the case of  quadratic potential $U(q) = |q|^2/2$,  $T = 1$ and initial points $Q_0 =(0.1, 0.5)$ and \textit{random} $P_0^i \sim \mathcal{N}(0 , 1)$, $i =1,2$. For collisional dynamics, domain $G$ is the square $(0, 1)^2$. }\label{random_collis_Hamil_append}

\end{figure}

\section{Existence of the density for [OBA$_c$BO]}\label{sec:density}

This appendix provides support to Lemma~\ref{lem:just2nd}, where it is assumed that at every step the Markov chain generated by [OBA$_c$BO] has a density. Here, for simplicity of the exposition we consider the case of $d=1$ for which we show that there is indeed a density.

Let $\Gamma = \Gamma(\beta,\gamma,h)= \sqrt{\frac{2\gamma}{\beta}(1 - e^{-\gamma h})}$. 
For $d=1$, [OBA$_c$BO] from (\ref{eq:halfspace}) can be written as a map $(Q, P) = T(\xi_1, \xi_2)$ taking the form  
\begin{align*}
    {\cal P}_{2}(\xi_1) &= e^{-h/2}p + \Gamma \xi_1 - \frac{h}{2} \frac{\partial U}{\partial q}(q) , \quad
    {\cal Q}(\xi_1) = q +  {\cal P}_{2}(\xi_1)h,
    \\ 
    s(\xi_1) &= \text{sign}({\cal Q}(\xi_1)), \quad  
    Q(\xi_1) = s(\xi_1){\cal Q}(\xi_1)=|{\cal Q}(\xi_1)|,   \\ 
    {\cal P}_{4}(\xi_1) &= s(\xi_1){\cal P}_{2}(\xi_1) - \frac{h}{2} \frac{\partial U}{\partial q}(Q(\xi_1)), \quad
     P(\xi_1, \xi_2) ={\cal P}_{4}(\xi_1) e^{-h/2} + \Gamma \xi_2.
\end{align*}
The distribution of $(Q, P)$ is the pushforward of the two dimensional standard Gaussian measure (generated by $\xi_1$ and $\xi_2$) under the map $T$. This distribution has a density if the Jacobian matrix of $T(z_1,z_2)$, 
$$
J_T = \begin{bmatrix}
\frac{\partial Q}{\partial z_1} & \frac{\partial Q}{\partial z_2} \\
\frac{\partial P}{\partial z_1} & \frac{\partial P}{\partial z_2}
\end{bmatrix},
$$
is non-singular (i.e., its determinant is non-zero) almost everywhere. Note that $s(z_1) = 0$ corresponds to 
$z_1=-\frac{1}{\Gamma}\left(q+e^{-h/2}p  - \frac{h}{2} \frac{\partial U}{\partial q}(q)\right)$, which is of measure zero with respect to the two dimensional standard Gaussian measure.  Consequently, if the Jacobian is non-zero for both $s= 1$ and $s=-1$, there is a density. 


We have
\begin{align}
    \frac{\partial Q}{\partial z_2} = 0,\quad \quad \quad 
    \frac{\partial P}{\partial z_2} = \Gamma  .
\end{align}
In the region $s(z_1) = 1$, we have
\begin{align*}
     \frac{\partial Q}{\partial z_1} &= \frac{\partial}{\partial z_1} ( q + h {\cal P}_2) =  h \frac{\partial{\cal P}_2}{\partial z_1} =  h \Gamma, \\
     \frac{\partial P}{\partial z_1}&=e^{-h/2}\frac{\partial{\cal P}_{4}(\xi_1) }{\partial z_1}
     =e^{-h/2}\Gamma\left[1 -\frac{h^2}{2} \frac{\partial^2 U}{\partial q^2}(Q) \right] .
\end{align*}
In the region $s(z_1) = -1$, we have
\begin{align}
\frac{\partial Q}{\partial z_1} &= -  h \Gamma, \quad  
\frac{\partial P}{\partial z_1}=-e^{-h/2}\Gamma\left[1 -\frac{h^2}{2} \frac{\partial^2 U}{\partial q^2}(Q) \right] .
\end{align}
Therefore, the Jacobian is
\begin{align}
    \text{det}(J_T) = s(z_1)h\Gamma^2 ,
\end{align}
 and hence the density exist. Since the potential $U(x)$ is assumed to be smooth, the map $T(z_1,z_2)$ is smooth, and it is not difficult to show that the density is smooth too.
 We note that in the appendix we have used arguments analogous to the ones appearing in the discrete Malliavin Calculus (see e.g. \cite[Chap. 2]{Nualart2006} and \cite[Chap. 5]{Nualart2009CBMS}).
\end{document}